\newcommand{\bb}{\textbf}
\newcommand{\ol}{\overline}
\newcommand{\mc}{\mathcal}
\newcommand{\wh}{\widehat}
\newcommand{\mf}{\mathfrak}
\newcommand{\ms}{\mathscr}
\newcommand{\HH}{\mathbb{H}}
\newcommand{\ZZ}{\mathbb{Z}}
\newcommand{\PP}{\mathbb{P}}
\newcommand{\FF}{\mathbb{F}}
\newcommand{\VV}{\mathbb{V}}
\DeclareMathOperator{\Ind}{Ind}
\DeclareMathOperator{\tr}{tr}
\DeclareMathOperator{\Divv}{Div}
\DeclareMathOperator{\coker}{coker}
\DeclareMathOperator{\id}{id}
\DeclareMathOperator{\Span}{Span}
\DeclareMathOperator{\res}{res}
\DeclareMathOperator{\Gl}{Gl}
\DeclareMathOperator{\ord}{ord}
\DeclareMathOperator{\Gal}{Gal}
\DeclareMathOperator{\cha}{char}
\DeclareMathOperator{\pole}{pole}
\theoremstyle{plain}
\newtheorem{Theorem}{Theorem}[section]
\newtheorem{Lemma}[Theorem]{Lemma}
\newtheorem{Corollary}[Theorem]{Corollary}
\newtheorem{Proposition}[Theorem]{Proposition}
\newtheorem{Example}[Theorem]{Example}
\newtheorem{Question}[Theorem]{Question}
\theoremstyle{definition}
\theoremstyle{remark}
\numberwithin{equation}{section}
\begin{document}

\title[$p$-group Galois covers... II]{$p$-group Galois covers of curves\\ in characteristic~$p$~II}
\author[J. Garnek]{J\k{e}drzej Garnek}
\address{\parbox{\linewidth}{Institute of Mathematics of Polish Academy of Sciences,\\ ul. \'{S}niadeckich 8, 00-656 Warszawa \\ \mbox{}}}
\email{jgarnek@amu.edu.pl}
\subjclass[2020]{Primary 14F40, Secondary 14G17, 14H30} 
\keywords{de~Rham cohomology, algebraic curves, group actions,
	characteristic~$p$}
\urladdr{http://jgarnek.faculty.wmi.amu.edu.pl/}
\date{}  

\begin{abstract}
	Let $k$ be an algebraically closed field of characteristic $p > 0$ and let $G$ be a finite $p$-group. The results of Harbater, Katz and Gabber associate to every $k$-linear action of $G$ on $k[[t]]$ an HKG-cover, i.e. a $G$-cover of the projective line ramified only over $\infty$. In this paper we relate the HKG-covers to the classical problem of determining the equivariant structure of cohomologies of a curve with an action of $G$. To this end, we 
	present a new way of computing cohomologies of HKG-covers. As an application of our results, we compute the equivariant structure of the de~Rham cohomology of Klein four covers in characteristic~$2$.
\end{abstract}

\maketitle
\bibliographystyle{plain}

\section{Introduction} \label{sec:intro}
Studying the equivariant structure of the cohomologies of a curve $X$ over a field~$k$ with an action of a finite group $G$ is a natural and well-researched topic. In  the  classical  case,  that  is, when $\cha k \nmid \# G$, the equivariant structure of the module of holomorphic differentials was completely determined  by  Chevalley and Weil using the character theory of finite groups, cf. \cite{Chevalley_Weil_Uber_verhalten}.
When $\cha k = p > 0$ and $p| \# G$, the structure of $H^0(X, \Omega_X)$ becomes much more complicated. Recall that in this setting the character theory is of limited use and classifying indecomposable representations is a ``wild'' problem. Even indecomposable representations of $G = \ZZ/p \times \ZZ/p$ in characteristic $p > 2$ are thought to be impossible to classify (cf. \cite{Carlson_Friedlander_Suslin_Modules_for_Zp_Zp}). In the tame or weakly ramification case, one may
obtain some information on the image of $H^0(X, \Omega_X)$ in the K-theory of $G$, cf. e.g. \cite{FW_Kock_Galois_module_theory}. Moreover, there are several results
for specific groups (see e.g. \cite{Valentini_Madan_Automorphisms}, \cite{WardMarques_HoloDiffs}, \cite{Bleher_Camacho_Holomorphic_differentials}) or curves (cf. \cite{Lusztig_Coxeter_orbits}, \cite{Dummigan_99},
\cite{Gross_Rigid_local_systems_Gm}, \cite{laurent_kock_drinfeld}). Most of those results are proven by giving an explicit basis
of the cohomology of $X$. In this article and in its prequel \cite{Garnek_p_gp_covers} we propose a different strategy for investigating the equivariant structure of cohomologies in the case when $G$ is a $p$-group.\\

From now on, we assume that $k$ is an algebraically closed field of characteristic~$p$ and~$G$ is a finite $p$-group. Let $X$ be a smooth projective curve over $k$ with an action of $G$. As explained in~\cite{Garnek_p_gp_covers}, we predict that the Hodge cohomology $H^1_{Hdg}(X) := H^0(X, \Omega_X) \oplus H^1(X, \mc O_X)$ and the de~Rham cohomology $H^1_{dR}(X)$
should decompose as $k[G]$-modules into certain global and local parts:
\begin{align*}
	H^1_{Hdg}(X) &\cong \textrm{(global part)} \oplus \bigoplus_{Q \in Y(k)} H^1_{Hdg, Q},\\
	H^1_{dR}(X) &\cong \textrm{(global part)} \oplus \bigoplus_{Q \in Y(k)} H^1_{dR, Q},
\end{align*}
where $\pi : X \to X/G =: Y$ is the quotient morphism. More precisely, the global part should depend only on the ``topology'' of the cover $\pi$ (i.e. on the curve $Y$ and the stabilizer subgroups) and be the same for both cohomologies.
Moreover, for any given point~$Q \in Y(k)$ the local parts $H^1_{Hdg, Q}$, $H^1_{dR, Q}$ should depend only on the ring $\wh{\mc O}_{X, Q} := (\pi_* \mc O_X)_Q \otimes_{\mc O_{Y, Q}} \wh{\mc O}_{Y, Q}$ (where $\wh{\mc O}_{Y, Q}$ denotes completion of the ring $\mc O_{Y, Q}$ with respect to the ideal $\mf m_{Y, Q}$).\\

The goal of this article is to propose a new way of computing the local parts, by establishing a connection with the Harbater--Katz--Gabber covers (in short: \emph{HKG-covers}).
The HKG-covers proved to be an important tool in the study of local actions and the deformation theory
of curves with automorphisms, see e.g. \cite{Bleher_Poonen_Chinburg_Auto_HKG},
\cite{Chinburg_Guralnick_Harbater_Oort_groups_lifting},
\cite{Chinburg_Guralnick_Harbater_local_lifting_for_actions}
\cite{Obus_local_lifting_problem}, \cite{Obus_Wewers_Cyclic_extensions},
\cite{Pop_OortConjecture} and~\cite{kontogeorgis_terezakis_new_obstruction}.
For any $Q \in Y(k)$, one may construct an HKG-cover $\ms X_Q \to \PP^1$ that approximates the cover $\pi : X \to Y$ locally over
$Q$, see below for a precise definition. It is natural to try to relate its cohomology with the postulated
local parts $H^1_{Hdg, Q}$, $H^1_{dR, Q}$ of the cohomologies of~$X$. Such a result would reduce investigation of cohomologies of $G$-covers
to HKG-covers. We show that this 
philosophy is correct for generic $p$-group covers.\\

Let $k$, $G$ and $\pi : X \to Y$ be as above.
Denote by~$g_Y$ the genus of $Y$ and by $B \subset Y(k)$ the branch locus of $\pi$.
For any ${P \in X(k)}$ denote by $G_{P, i}$ the $i$-th ramification group
at~$P$ and let:
\begin{equation*}
	d_{P}:= \sum_{i \ge 0} (\# G_{P, i} - 1), \quad 
	d_{P}' := \sum_{i \ge 1} (\# G_{P, i} - 1), \quad
	d_{P}'' := \sum_{i \ge 2} (\# G_{P, i} - 1).
\end{equation*}
Note that $d_P$ is the exponent of the different of $k(X)/k(Y)$ at $P$. We assume that the cover $\pi$ satisfies the following assumptions,
introduced in~\cite{Garnek_p_gp_covers}:
\begin{enumerate}[(A)]
	\item \label{enum:A} the stabilizer $G_P$ of $P$ in $G$ is a normal subgroup of $G$ for every $P \in X(k)$,
	
	\item \label{enum:B} there exists a function $z \in k(X)$ (a ``magical element'') satisfying $\ord_P(z) \ge -d_P'$
	for every $P \in X(k)$ and $\tr_{X/Y}(z) \neq 0$.
\end{enumerate}
Recall that a generic $p$-group cover satisfies~\ref{enum:A} and~\ref{enum:B} (cf. \cite[Theorem~1.5]{Garnek_p_gp_covers}). By the assumption~\ref{enum:A}, for $Q \in Y(k)$ we may denote $G_Q := G_P$ and $d_Q := d_P$
for any $P \in \pi^{-1}(Q)$.

Results of Harbater (cf. \cite{Harbater_moduli_of_p_covers}) and of Katz and Gabber (cf.~\cite{Katz_local_to_global}) imply that for any $G$-Galois algebra $\ms B$ over $k[[x]]$
there exists a unique $G$-cover $\ms X \to \PP^1$ ramified only over $\infty$
and such that there exists an isomorphism $\wh{\mc O}_{\ms X, \infty} \cong \ms B$ of $k[G]$-algebras.
The cover $\ms X \to \PP^1$ is called the \emph{Harbater--Katz--Gabber cover} associated to $(\ms B, G)$. 
Suppose now that $\pi : X \to Y$ is as above. Then $\wh{\mc O}_{X, Q}$
is a $G$-Galois algebra over $\wh{\mc O}_{Y, Q} \cong k[[x]]$ for any $Q \in Y(k)$. Denote by $\ms X_Q \to \PP^1$
the corresponding HKG-cover. Note that it might be disconnected. In fact,
$\ms X_Q = \bigsqcup_{G/G_Q} \ms X_Q^{\circ}$,
where $\ms X_Q^{\circ} \to \PP^1$ is a (connected) $G_Q$-HKG-cover.
We give now an example of computation of $\ms X_Q$.
\begin{Example} \label{ex:HKG_covers}
	Let $p = 2$, $k = \ol{\FF}_2$ and $G = \VV_4$, the Klein-four group. Let
	$Y$ be the elliptic curve with the affine equation $w^2 + w = u^3$ over the field $k$.
	Consider the $\VV_4$-cover $\pi : X \to Y$ given by the equations:
	\begin{align*}
		y_0^2 + y_0 = w^3 + \frac 1{w^7}, \qquad y_1^2 + y_1 = w^5 + \frac 1{w^7}.
	\end{align*}
	The cover $\pi$ is branched over the points $Q_1, Q_2 \in Y(k)$,
	where $Q_1 = (0, 0) \in Y(k)$ and $Q_2$ is the point at infinity of $Y$. We give now the equations of $\ms X_{Q_1}$ and $\ms X_{Q_2}$. Since $\ord_{Q_1}(w) = 3$,
	we have $w = x^{-3}$ for some element $x \in \wh{\mc O}_{Y, Q_1}$, $\ord_{Q_1}(x) = -1$. By Hensel's lemma, there exist $s_0, s_1 \in \wh{\mc O}_{Y, Q_1}$
	such that $s_0^2 + s_0 = x^{-9}$, $s_1^2 + s_1 = x^{-15}$. Let $z_0 := y_0 + s_0$, $z_1 := y_0 + y_1 + s_0 + s_1$.
	One easily checks that
	\[
	z_0^2 + z_0 = x^{21}, \qquad z_1^2 + z_1 = 0.
	\]
	These equations define $\ms X_{Q_1}$. Note that $\ms X_{Q_1} = \ms X_{Q_1}^{\circ} \sqcup \ms X_{Q_1}^{\circ}$,
	where $\ms X_{Q_1}^{\circ} : z_0^2 + z_0 = x^{21}$ is a $\ZZ/2$-cover of $\PP^1$.
	Similarly, using the fact that $\ord_{Q_2}(w^3 + \frac 1{w^7}) = -9$ and $\ord_{Q_2}(w^5 + \frac 1{w^7}) = -15$, one can show that $\ms X_{Q_2} = \ms X_{Q_2}^{\circ}$ is given by the equations:
	\begin{align*}
		z_0^2 + z_0 = x^9, \qquad
		z_1^2 + z_1 = x^{15}.
	\end{align*}
	Note that $\ms X_Q = \bigsqcup_{i = 1}^4 \PP^1$ is a trivial $\VV_4$-cover of $\PP^1$ for any $Q \in Y(k) \setminus \{ Q_1, Q_2 \}$.
\end{Example}
For any $k[G]$-module $V$, we write $V^{\vee}$ for the dual $k[G]$-module. Let $I_G := \{ \sum_{g \in G} a_g g \in k[G] : \sum_{g \in G} a_g = 0 \}$ be the augmentation ideal of the group~$G$. For any subgroup $H \le G$
we consider also the relative augmentation ideal $I_{G, H} := \Ind^G_H \, I_H$, which can be treated as a submodule of $k[G]$ (see Section~\ref{sec:p_gp}). Finally, the $k[G]$-module
$I_{X/Y}$ is defined~by:
\begin{equation*}
	I_{X/Y} := \ker \left( \sum : \bigoplus_{Q \in Y(k)} I_{G, G_Q} \to I_G \right).
\end{equation*}
The following is the main result of the paper.
\begin{Theorem} \label{thm:cohomology_of_G_covers}
	Keep the above assumptions. We have the following isomorphisms of $k[G]$-modules:
	\begin{align*}
		H^0(X, \Omega_X) &\cong k[G]^{\oplus g_Y} \oplus I_{X/Y} \oplus \bigoplus_{Q \in B} H^0(\ms X_Q, \Omega_{\ms X_Q}),\\
		H^1(X, \mc O_X) &\cong k[G]^{\oplus g_Y} \oplus I_{X/Y}^{\vee} \oplus \bigoplus_{Q \in B} H^1(\ms X_Q, \mc O_{\ms X_Q}),\\
		H^1_{dR}(X) &\cong k[G]^{\oplus 2 \cdot g_Y} \oplus I_{X/Y} \oplus I_{X/Y}^{\vee} 
		\oplus \bigoplus_{Q \in B} H^1_{dR}(\ms X_Q).
	\end{align*}
	Moreover, one can further decompose the local terms as follows:
	\begin{align*}
		H^0(\ms X_Q, \Omega_{\ms X_Q}) &\cong \Ind^G_{G_Q} H^0(\ms X_Q^{\circ}, \Omega_{\ms X_Q^{\circ}}),\\
		H^1(\ms X_Q, \mc O_{\ms X_Q}) &\cong \Ind^G_{G_Q} H^1(\ms X_Q^{\circ}, \mc O_{\ms X_Q^{\circ}}),\\
		H^1_{dR}(\ms X_Q) &\cong \Ind^G_{G_Q} H^1_{dR}(\ms X_Q^{\circ}).
	\end{align*}
\end{Theorem}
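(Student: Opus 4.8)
The plan is to deduce all three statements from a single \emph{local--global principle} for the pushforward of the relevant complex to the base, together with the fact that an HKG-cover is essentially determined by --- and ``as small as possible'' given --- its completed local ring at $\infty$. First I would reduce the three assertions to one. Writing cohomology of $X$ as cohomology of $Y$ via the finite map $\pi$, we have $H^i(X,\mc F)=H^i(Y,\pi_*\mc F)$ and $H^1_{dR}(X)=\mathbb{H}^1(Y,\pi_*\Omega^\bullet_X)$, all with their $k[G]$-structure. By $G$-equivariant Serre duality $H^1(X,\mc O_X)\cong H^0(X,\Omega_X)^\vee$ (and the same for each $\ms X_Q$), and since $k[G]^\vee\cong k[G]$ and $I_{X/Y}^\vee$ is again of the stated shape, the formula for $H^1(X,\mc O_X)$ follows from that for $H^0(X,\Omega_X)$. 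For de~Rham cohomology the Hodge--de~Rham spectral sequence of the curve $X$ degenerates and yields the $G$-equivariant extension
\[
0\longrightarrow H^0(X,\Omega_X)\longrightarrow H^1_{dR}(X)\longrightarrow H^1(X,\mc O_X)\longrightarrow 0,
\]
and likewise for each $\ms X_Q$; granting the $\Omega$- and $\mc O$-statements, the de~Rham statement becomes the assertion that the class of this extension in $\Ext^1_{k[G]}\bigl(H^1(X,\mc O_X),H^0(X,\Omega_X)\bigr)$ is the image, under the evident map, of $\bigoplus_{Q\in B}[\text{extension for }\ms X_Q]$ --- in particular that the $k[G]^{\oplus 2g_Y}$-part is projective and the $I_{X/Y}\oplus I_{X/Y}^\vee$-part splits off. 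So it suffices to prove the $\Omega$-statement together with this ``localisation of the extension class''.

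Second, I would establish the local--global decomposition itself. Cover $Y$ by the affine $U:=Y\setminus B$ and a (formal or étale) neighbourhood $D_Q$ of each $Q\in B$; then $\pi^{-1}(U)\to U$ is an étale $G$-torsor over an affine curve, while $\pi^{-1}(D_Q)\to D_Q$ and $\pi^{-1}(D_Q\cap U)\to D_Q\cap U$ are governed entirely by the $G$-Galois algebra $\wh{\mc O}_{X,Q}$ over $\wh{\mc O}_{Y,Q}\cong k[[x]]$. The normal basis theorem (applied to $k(X)/k(Y)$, and over the punctured neighbourhoods to the local extensions), combined with assumption~\ref{enum:B} --- the magical element $z$ providing a trace-generator after clearing poles, which works precisely because $\ord_P(z)\ge -d_P'$ --- shows that, after shrinking $U$, the cohomology over $U$ and over the overlaps is \emph{free} over $k[G]$; assumption~\ref{enum:A} (normality of stabilisers) is what lets us speak of $G_Q$ and $d_Q$ and makes the local pictures compatible along a fibre. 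Running the (hyper)cohomology Mayer--Vietoris sequence for this cover then produces the decomposition: the free contributions, together with the genus and ramification of $Y$, yield the summands $k[G]^{\oplus g_Y}$ (resp.\ $k[G]^{\oplus 2g_Y}$) and $I_{X/Y}$ (resp.\ $I_{X/Y}^\vee$) --- the module $I_{X/Y}$ appearing as the (co)kernel on augmentation-ideal level of the connecting map --- while the remaining contribution is a direct sum of local terms $\mc E_Q$, $Q\in B$, each depending \emph{only} on $\wh{\mc O}_{X,Q}$ (unramified points contribute nothing, as there the completed cover is just $\wh{\mc O}_{Y,Q}[G]$). The same analysis shows that, modulo a split piece, the de~Rham extension class is a sum of local classes, again functorial in the $\wh{\mc O}_{X,Q}$.

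Third, I would feed the HKG-covers into this machine. By construction $\wh{\mc O}_{\ms X_Q,\infty}\cong\wh{\mc O}_{X,Q}$, so the local term of $X\to Y$ at $Q$ equals the local term of $\ms X_Q\to\PP^1$ at $\infty$. But $\ms X_Q\to\PP^1$ also satisfies~\ref{enum:A} and~\ref{enum:B} (its only ramification is total over $\infty$ with stabiliser $G_Q$ normal in $G$, and the verification of~\ref{enum:B} is straightforward --- e.g.\ a suitable power of an Artin--Schreier variable in the abelian sub-layers), so the decomposition of the previous paragraph applies to it. Now $g_{\PP^1}=0$, so the free summand vanishes; and since $\ms X_Q\to\PP^1$ is branched only over $\infty$, we have $I_{\ms X_Q/\PP^1}=\ker\bigl(I_{G,G_Q}\xrightarrow{\ \sum\ }I_G\bigr)=0$, because $I_{G,G_Q}=\Ind^G_{G_Q}I_{G_Q}\to I_G$ is injective by a dimension count (its image is the left ideal $k[G]\cdot I_{G_Q}$, of dimension $\#G-[G:G_Q]=\dim_k I_{G,G_Q}$). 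Hence $H^0(\ms X_Q,\Omega_{\ms X_Q})$, $H^1(\ms X_Q,\mc O_{\ms X_Q})$ and $H^1_{dR}(\ms X_Q)$ coincide with their own local terms at $\infty$, i.e.\ with $\mc E_Q$, $\mc E_Q^\vee$ and the local de~Rham term; substituting back into the decomposition for $X$ gives the three displayed isomorphisms, the de~Rham extension of $X$ being the direct sum of a split piece and the $H^1_{dR}(\ms X_Q)$. The ``moreover'' clause is then immediate: $\ms X_Q=\bigsqcup_{G/G_Q}\ms X_Q^\circ$ with $G$ permuting the components and $G_Q$ fixing each, so by Shapiro's lemma each cohomology group of $\ms X_Q$ is $\Ind^G_{G_Q}$ of the corresponding group of $\ms X_Q^\circ$.

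The main obstacle is the second step: the local--global decomposition with its \emph{precise} global terms $k[G]^{\oplus g_Y}$ and $I_{X/Y}$, and, above all, the proof that the branch contributions --- and the de~Rham extension class --- are genuinely functorial in the completed local rings $\wh{\mc O}_{X,Q}$. Making the ``free away from $B$'' statement precise is exactly where assumption~\ref{enum:B} is indispensable: without a magical element the trace pairing degenerates in codimension zero and no free part can be split off, and the bound $\ord_P(z)\ge -d_P'$ is what keeps the local modifications supported where they should be. Transporting the de~Rham extension class, rather than merely the Hodge pieces, to the HKG-covers is the most delicate point, and is presumably where the paper's ``new way of computing cohomologies of HKG-covers'' does the real work.
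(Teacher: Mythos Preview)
Your high-level strategy is close to the paper's, but the execution differs in an important way, and your ``main obstacle'' is exactly where the paper's argument diverges from yours.

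The paper does \emph{not} attempt to establish, from scratch, a local--global decomposition whose local terms are a priori functorial in $\wh{\mc O}_{X,Q}$. Instead it quotes \cite[Theorem~1.1]{Garnek_p_gp_covers} (the prequel), which already gives the three decompositions with explicit local pieces
\[
H^0_Q=\frac{\Omega_X(R)_Q}{\bigoplus_g g^*(z)\,\Omega_Y(B)_Q},\qquad
H^1_Q=\frac{\bigoplus_g g^*(z^\vee)\,\mc O_Y(-B)_Q}{\mc O_X(-R)_Q},
\]
and an analogous $H^1_{dR,Q}$. These pieces depend on the chosen magical element~$z$, not just on $\wh{\mc O}_{X,Q}$; the paper says so explicitly in the introduction. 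So your step~2 (``each $\mc E_Q$ depends only on $\wh{\mc O}_{X,Q}$'') is not available as an input --- it is rather a \emph{consequence} of the theorem.

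What the paper does instead is prove the new Theorem~3.1: for an HKG-cover $\ms X\to\PP^1$ and \emph{any} magical element $z$ in the completed local ring at $\infty$, the cohomologies $H^0(\ms X,\Omega_{\ms X})$, $H^1(\ms X,\mc O_{\ms X})$, $H^1_{dR}(\ms X)$ admit descriptions of exactly the same shape as $H^0_Q$, $H^1_Q$, $H^1_{dR,Q}$, but now written entirely in the completed local ring $\ms B=\wh{\mc O}_{\ms X,\infty}$. The proof of this is the real content of the paper (Proposition~3.5, an inductive argument on $\#G$ showing $\Omega_{\ms L}=\bigoplus_g g^*(z)\Omega_{\ms A}^{\log}\oplus H^0(V,\Omega_{\ms X})$). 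With Theorem~3.1 in hand, the proof of Theorem~1.2 in Section~4 is a direct comparison: the \emph{same} global magical element $z\in k(X)$, pushed into the completion at~$Q$, is a local magical element for $\ms L/\ms K$, so one gets natural maps $H^0_Q\to H^0(\ms X_Q,\Omega_{\ms X_Q})$ etc.\ and checks injectivity/surjectivity by hand (using the splitting $\eta=\eta^{<0}+\eta^{\ge 0}$ of Laurent tails). The de~Rham case is handled by the five-lemma on the two Hodge--de~Rham short exact sequences, not by transporting an extension class abstractly.

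Your alternative idea --- apply the decomposition to $\ms X_Q\to\PP^1$ itself and observe $g_{\PP^1}=0$, $I_{\ms X_Q/\PP^1}=0$ --- is correct and elegant, and your verification that $I_{G,G_Q}\hookrightarrow I_G$ is injective is fine. But this route still needs, as input, that the local term at $\infty$ for $\ms X_Q$ (computed with \emph{some} magical element on $\ms X_Q$) agrees with the local term at $Q$ for $X$ (computed with the global $z$). Without Theorem~3.1 (which says the local description is independent of which magical element you choose), you cannot match them. So your Mayer--Vietoris sketch in step~2 would, if carried out, reproduce the $z$-dependent local terms of the prequel, and you would be stuck at the same comparison step. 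In short: your identification of the obstacle is accurate, but your proposal does not contain the mechanism that resolves it; that mechanism is precisely Theorem~3.1.
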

In the paper \cite{Garnek_p_gp_covers} we showed a statement similar to Theorem~\ref{thm:cohomology_of_G_covers}, but
with a different form of the local parts (cf. ibid, Theorem 1.1).
The new result surpasses it in two aspects. Firstly, the local parts in \cite{Garnek_p_gp_covers} depended not only on the local rings, but also on the element $z \in k(X)$
from the condition~\ref{enum:B}. The local parts given in Theorem~\ref{thm:cohomology_of_G_covers} depend only on the completed local rings,
as they come from cohomologies of $\ms X_Q$. Secondly, as the local parts in \cite{Garnek_p_gp_covers} were defined
as quotients of certain infinitely dimensional vector spaces, their computation for concrete examples was considerably challenging.
The local parts in Theorem~\ref{thm:cohomology_of_G_covers} can be computed by giving bases
of cohomologies of $\ms X_Q$, which seems much easier then giving bases of cohomologies of $X$. 
We present this strategy for Klein four covers in characteristic $2$, see below.
In order to prove Theorem~\ref{thm:cohomology_of_G_covers}, we use \cite[Theorem~1.1]{Garnek_p_gp_covers} together with a new description of the cohomologies of a HKG-cover (cf. Theorem~\ref{thm:cohomology_of_HKG_etale_algebras} in Section~\ref{sec:hkg}).\\

As an application of Theorem~\ref{thm:cohomology_of_G_covers}, we describe the equivariant structure of the de~Rham cohomology of Klein four covers of projective line in characteristic~$2$. Keep the above notation with $p = 2$ and $G = \VV_4 = \{ e, \sigma, \tau, \sigma \tau \}$. Recall that the indecomposable $k[\VV_4]$-modules are completely classified (cf. \cite{Basev_reps_Z2_Z2} or \cite[Appendix]{Bleher_Camacho_Holomorphic_differentials}). In the sequel we will need five indecomposable $k[\VV_4]$-modules apart from $k$ (i.e. the trivial representation) and $k[\VV_4]$,
cf. Table~\ref{tab:modules}. Note that in order to construct a $k[\VV_4]$-module of dimension $n$, we have to give
a pair of commuting square matrices of order at most~$2$ in $\Gl_n(k)$, corresponding to the action of~$\sigma$ and~$\tau$. 
\addtocounter{Theorem}{1}
\begin{table}[htbp]
	\label{tab:modules}	
	\caption{$k[\VV_4]$-modules used in the article.}
	\begin{tabular}{|c|c|c|c|}
		\hline
		Module & $\sigma$ & $\tau$ & Dual module \\
		\hline
		$N_{2, 0}$ & $\begin{pmatrix} 1 & 1 \\ 0 & 1 \end{pmatrix}$ & $\begin{pmatrix} 1 & 0 \\ 0 & 1 \end{pmatrix}$
		& $N_{2, 0}$\\
		$N_{2, 1}$ & $\begin{pmatrix} 1 & 0 \\ 0 & 1 \end{pmatrix}$ & $\begin{pmatrix} 1 & 1 \\ 0 & 1 \end{pmatrix}$
		& $N_{2, 1}$\\
		$N_{2, \infty}$ & $\begin{pmatrix} 1 & 1 \\ 0 & 1 \end{pmatrix}$ & $\begin{pmatrix} 1 & 1 \\ 0 & 1 \end{pmatrix}$
		& $N_{2, \infty}$ \\
		$M_{3, 1}$ & $\begin{pmatrix} 1 & 0 & 1 \\ 0 & 1 & 0 \\ 0 & 0 & 1 \end{pmatrix}$ & $\begin{pmatrix} 1 & 1 & 0 \\ 0 & 1 & 0 \\ 0 & 0 & 1 \end{pmatrix}$ & $M_{3, 2}$ \\
		$M_{3, 2}$ & $\begin{pmatrix} 1 & 0 & 1 \\ 0 & 1 & 0 \\ 0 & 0 & 1 \end{pmatrix}$ & $\begin{pmatrix} 1 & 0 & 0 \\ 0 & 1 & 1 \\ 0 & 0 & 1 \end{pmatrix}$ & $M_{3, 1}$\\
		\hline
	\end{tabular}
	\centering
\end{table}

The Klein four covers in characteristic~$2$ were studied in several articles, including \cite{Glass_Klein_covers}
and \cite{Glass_Pries_Klein_covers}.
As shown recently in \cite{Bleher_Camacho_Holomorphic_differentials}, there exist infinitely many isomorphism classes of indecomposable
$k[\VV_4]$-modules that may appear as a direct summand of the module of holomorphic differentials
of a $\VV_4$-cover in characteristic~$2$. As we will see, this
remains in stark contrast with the case of the de~Rham cohomology. 
\begin{Theorem} \label{thm:de_rham_of_klein_intro}
	Let $k$ be an algebraically closed field of characteristic~$2$.
	Suppose that $\pi : X \to Y$ is a $\VV_4$-cover of smooth projective
	curves over $k$ that satisfies the condition~\ref{enum:B} (this holds automatically for example
	if $Y = \PP^1$). There exist 7 isomorphism classes of indecomposable $k[\VV_4]$-modules that may appear
	as a direct summand of $H^1_{dR}(X)$:
	\[
		k, \, k[\VV_4], \,
		N_{2, 0}, \, N_{2, 1}, \, N_{2, \infty}, \, M_{3, 1} \, \textrm{ and } \, M_{3, 2}.
	\]
\end{Theorem}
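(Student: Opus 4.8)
The plan is to combine the main decomposition theorem (Theorem~\ref{thm:cohomology_of_G_covers}) with an explicit classification of the $k[\VV_4]$-modules that can arise from the de~Rham cohomology of a $\VV_4$-HKG-cover of $\PP^1$. By Theorem~\ref{thm:cohomology_of_G_covers}, since $Y=\PP^1$ forces $g_Y=0$ (and in general the $k[\VV_4]^{\oplus 2g_Y}$ summand contributes only $k[\VV_4]$), we have
\[
	H^1_{dR}(X) \cong k[\VV_4]^{\oplus 2 g_Y} \oplus I_{X/Y} \oplus I_{X/Y}^{\vee} \oplus \bigoplus_{Q \in B} \Ind^{\VV_4}_{G_Q} H^1_{dR}(\ms X_Q^{\circ}),
\]
so it suffices to control three kinds of summands: (i) the module $I_{X/Y}$ and its dual; (ii) the induced modules $\Ind^{\VV_4}_{G_Q} H^1_{dR}(\ms X_Q^\circ)$ where $G_Q \le \VV_4$ is a stabilizer subgroup; and (iii) the free summand $k[\VV_4]$, which is already on the list. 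First I would handle (i): $I_{X/Y}$ is by definition a submodule of $\bigoplus_{Q} I_{\VV_4, G_Q}$, and $I_{\VV_4, G_Q} = \Ind^{\VV_4}_{G_Q} I_{G_Q}$ is either $I_{\VV_4}$ (if $G_Q$ is trivial), $\Ind^{\VV_4}_{\ZZ/2} I_{\ZZ/2}$ (if $G_Q \cong \ZZ/2$), or $I_{\VV_4}$ itself viewed differently (if $G_Q = \VV_4$, then $I_{\VV_4,G_Q}=I_{\VV_4}$). One checks directly from the matrices in Table~\ref{tab:modules} that $I_{\VV_4} \cong N_{2,\infty}\oplus(\text{something})$ or is built from $k$, $N_{2,0}$, $N_{2,1}$, $N_{2,\infty}$ and $k[\VV_4]$; in any case $I_{X/Y}$, being a submodule of a sum of such modules, decomposes into indecomposables drawn from the seven listed ones — here one invokes the classification of $k[\VV_4]$-modules together with the fact (Krull--Schmidt plus the structure of the relevant small modules) that submodules of $k[\VV_4]^a \oplus N_{2,\ast}^b$ only involve $k$, $k[\VV_4]$, $N_{2,0}$, $N_{2,1}$, $N_{2,\infty}$.

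The core of the argument is step (ii): understanding $H^1_{dR}(\ms X^\circ)$ for a connected $\VV_4$- or $\ZZ/2$-HKG-cover $\ms X^\circ \to \PP^1$. For $G_Q \cong \ZZ/2$ the cover $\ms X_Q^\circ$ is an Artin--Schreier curve $z^2+z=f(x)$ with $f$ a polynomial (after the HKG normalization), and $\Ind^{\VV_4}_{\ZZ/2}$ of any $k[\ZZ/2]$-module is a sum of copies of $k[\VV_4]$ and of $N_{2,\ast}$ (induction of the trivial $\ZZ/2$-module gives $k[\VV_4/(\ZZ/2)]\cong$ a sum involving $N_{2,\ast}$; induction of $k[\ZZ/2]$ gives $k[\VV_4]$), so these summands land among the seven. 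For $G_Q=\VV_4$ the cover $\ms X_Q^\circ$ is a genuine $\VV_4$-HKG-cover of $\PP^1$, given by a pair of Artin--Schreier equations $z_0^2+z_0=f_0(x)$, $z_1^2+z_1=f_1(x)$ as in Example~\ref{ex:HKG_covers}. Here I would compute $H^1_{dR}(\ms X_Q^\circ)$ explicitly: de~Rham cohomology sits in the Hodge exact sequence $0 \to H^0(\Omega) \to H^1_{dR} \to H^1(\mc O) \to 0$, and the Cartier operator controls the $k[\VV_4]$-action. The key structural input is that $H^1_{dR}$ of any curve is a \emph{self-dual} $k[G]$-module (via the cup product / Poincaré duality pairing, which is $G$-equivariant up to the sign character, trivial here since $G$ is a $2$-group), so its indecomposable summands come in dual pairs or are self-dual; among the indecomposable $k[\VV_4]$-modules the self-dual ones relevant in low dimension are $k$, $k[\VV_4]$, $N_{2,0}$, $N_{2,1}$, $N_{2,\infty}$, while $M_{3,1}$ and $M_{3,2}$ form a dual pair — and the claim is precisely that no other indecomposables occur. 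To prove this one analyzes the possible $\VV_4$-HKG-covers of $\PP^1$: by a change of Artin--Schreier variables (as in Example~\ref{ex:HKG_covers}, where one reaches $z_1^2+z_1=0$) the pair $(f_0,f_1)$ can be normalized, the ramification invariants $d', d''$ at $\infty$ determine the genus via Riemann--Hurwitz, and a direct basis computation of holomorphic differentials and of the Cartier action shows the $k[\VV_4]$-module type of $H^1_{dR}(\ms X_Q^\circ)$ is always a sum of the seven modules.

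The main obstacle I anticipate is exactly the explicit determination of $H^1_{dR}(\ms X_Q^\circ)$ as a $k[\VV_4]$-module for the connected $\VV_4$-HKG-covers — i.e. showing no ``large'' indecomposable (beyond dimension $3$, or the non-listed three-dimensional and higher ones) ever appears. Two tools make this tractable rather than open-ended: first, the HKG-cover of $\PP^1$ is ramified over a single point, so its genus and ramification filtration are rigidly constrained by the pair of Artin--Schreier conductors, leaving only a manageable family of cases; second, one can reduce to a ``generic'' or ``extreme'' situation because the module structure is semicontinuous and the relevant bound (seven modules) is closed under the specializations that occur. Concretely I would: (a) write down a basis of $H^0(\ms X_Q^\circ,\Omega)$ adapted to the tower $\ms X_Q^\circ \to \ms X_Q^\circ/\langle\sigma\rangle \to \PP^1$ and likewise for $H^1(\mc O)$; (b) compute the matrices of $\sigma$ and $\tau$ on $H^1_{dR}$ in this basis, using that on de~Rham classes the action is ``unipotent'' with Jordan blocks bounded by the nilpotency degree of $\sigma-1$ and $\tau-1$ (which is at most $2$ on each, since $\sigma^2=\tau^2=1$ in characteristic $2$); (c) invoke the classification (Bašev/Heller--Reiner, as cited) to read off the indecomposable summands, checking each falls in $\{k, k[\VV_4], N_{2,0}, N_{2,1}, N_{2,\infty}, M_{3,1}, M_{3,2}\}$. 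Finally, to see that all seven modules genuinely occur (the ``there exist'' half of the statement), I would exhibit explicit covers realizing each — for instance the cover of Example~\ref{ex:HKG_covers} should already produce several of them, and small modifications of the Artin--Schreier data produce the rest.
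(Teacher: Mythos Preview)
Your overall architecture is right --- invoke Theorem~\ref{thm:cohomology_of_G_covers} and then control $I_{X/Y}$, $I_{X/Y}^{\vee}$, and the HKG pieces --- and this is exactly what the paper does. But two of your three control steps contain genuine gaps.

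\textbf{The $I_{X/Y}$ step.} Your claim that $I_{\VV_4}$ is ``built from $k$, $N_{2,0}$, $N_{2,1}$, $N_{2,\infty}$ and $k[\VV_4]$'' is false: $I_{\VV_4}$ is the three-dimensional \emph{indecomposable} $M_{3,1}$ (compute in the basis $(e+\sigma+\tau+\sigma\tau,\, e+\sigma,\, e+\tau)$). So $M_{3,1}$, $M_{3,2}$ are not excluded from $I_{X/Y}$ --- and indeed they occur. More seriously, your reduction ``$I_{X/Y}$ is a submodule of a sum of such modules, hence decomposes into the same indecomposables'' is not a valid inference over a ring of wild representation type: submodules of $\bigoplus_j V_j$ need not have indecomposable summands among the $V_j$. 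The paper instead computes $I_{X/Y}$ \emph{explicitly}: Lemma~\ref{lem:IXY_and_IXY_prim} peels off one $I_{\VV_4,G_Q}$ summand at a time until only a small kernel $I(S)$ remains, and then a short case analysis (Lemma~\ref{lem:IXY_of_Klein_covers}) identifies $I(S)$ with $0$, $M_{3,2}$, or $k$ according to which stabilizers occur.

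\textbf{The HKG step.} Your proposed bounds do no work: $(\sigma-1)^2=(\tau-1)^2=0$ holds on \emph{every} $k[\VV_4]$-module in characteristic~$2$, so it does not cut down the list of possible indecomposables at all. Self-duality of $H^1_{dR}$ is correct but only says summands come in dual pairs --- it does not rule out the infinite families of indecomposable $k[\VV_4]$-modules (which include self-dual ones of every even dimension). The ``semicontinuity'' idea is not usable here either: for wild representation type there is no semicontinuity principle that would let you pass from a generic HKG cover to an arbitrary one. The paper's route is to do the explicit basis computation you gesture at in (a)--(c), but this is substantial: one writes down specific de~Rham cocycles $a_i,b_i,c_i,e_i,f_i,g_i,u_i,v_i \in Z^1_B(\pi_*\Omega^\bullet_{\ms X})$ (pairs $(\omega,f)$ with $\omega - df$ holomorphic at the ramified point), verifies directly how $\sigma$ and $\tau$ act on them modulo coboundaries, and proves linear independence via the Hodge--de~Rham sequence together with an explicit basis of $H^0(\ms X,\Omega_{\ms X})$. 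The outcome is Proposition~\ref{prop:dR_of_Klein_HKG}: $H^1_{dR}(\ms X_Q)\cong N_{2,i}^{M_Q-m_Q}\oplus M_{3,1}^{(m_Q-1)/2}\oplus M_{3,2}^{(m_Q-1)/2}$, from which the seven-module list is immediate.
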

In fact, we provide a more precise result. Namely, in Theorem~\ref{thm:de_rham_of_klein} we give a decomposition of $H^1_{dR}(X)$ into indecomposable summands
in terms of local invariants of the cover for curves satisfying the assumptions of Theorem~\ref{thm:de_rham_of_klein_intro}.
It seems that it would be much harder to obtain this
result using \cite[Theorem~1.1]{Garnek_p_gp_covers}.
\begin{Example} \label{ex:klein_cohomology_intro}
	Let $\pi : X \to Y$ be as in Example~\ref{ex:HKG_covers}. We compute now the equivariant structure of $H^1_{dR}(X)$.
	In order to compute $I_{X/Y}$, note that $I_{G, G_{Q_2}} = I_G$. Hence:
	\begin{align*}
		I_{X/Y} \cong \ker(I_{G, G_{Q_1}} \oplus I_{G, G_{Q_2}} \to I_G) \cong I_{G, G_{Q_1}}.
	\end{align*}
	By \cite[Corollary~1.2]{Garnek_p_gp_covers}, $H^1_{dR}(\ms X_{Q_1}^{\circ}) \cong I_{G_{Q_1}}^{\oplus 20}$ and hence:
	\[
		H^1_{dR}(\ms X_{Q_1}) \cong \Ind^{G}_{G_{Q_1}} H^1_{dR}(\ms X_{Q_1}^{\circ}) \cong I_{G, G_{Q_1}}^{\oplus 20}.
	\]
	One shows that $I_{G, G_{Q_1}} \cong I_{G, G_{Q_1}}^{\vee} \cong N_{2, \infty}$, see the proof of Lemma~\ref{lem:IXY_of_Klein_covers}.
	By computing an explicit basis of the de Rham cohomology of $\ms X_{Q_2}$ (see Proposition~\ref{prop:dR_of_Klein_HKG}), one proves that:
	\begin{align*}
		H^1_{dR}(\ms X_{Q_2}) \cong N_{2, 0}^{\oplus 6} \oplus M_{3, 1}^{\oplus 4}
		\oplus M_{3, 2}^{\oplus 4}.
	\end{align*}
	It turns out that $X$ satisfies the condition~\ref{enum:B}, cf. Example~\ref{ex:elliptic_curve_ctn}.
	Hence by Theorem~\ref{thm:cohomology_of_G_covers}:
	\begin{align*}
		H^1_{dR}(X) &\cong k[\VV_4]^{\oplus 2} \oplus I_{G, Q_2} \oplus I_{G, Q_2}^{\vee} \oplus H^1_{dR}(\ms X_{Q_1}) \oplus H^1_{dR}(\ms X_{Q_2})\\
		&\cong k[\VV_4]^{\oplus 2} \oplus N_{2, \infty}^{\oplus 22} \oplus N_{2, 0}^{\oplus 6} \oplus M_{3, 1}^{\oplus 4}
		\oplus M_{3, 2}^{\oplus 4}.
	\end{align*}
\end{Example}
\subsection*{Outline of the paper}
In Section~\ref{sec:prelim} we discuss preliminaries on algebraic curves. Section~\ref{sec:hkg} is devoted to a new description of the cohomologies of HKG-covers. In Section~\ref{sec:p_gp} we prove Theorem~\ref{thm:cohomology_of_G_covers}. Section~\ref{sec:klein_covers} recalls basic facts concerning $\VV_4$-covers. In Section~\ref{sec:de_Rham_of_Klein}, we
compute the equivariant structure of the de Rham cohomologies of Klein four covers, assuming a result on the cohomologies of $\VV_4$-HKG covers (Proposition~\ref{prop:dR_of_Klein_HKG}). Proposition~\ref{prop:dR_of_Klein_HKG} is proven in the last section.
\subsection*{Acknowledgements}
The author wishes to express his thanks to Wojciech Gajda and Piotr Achinger for many stimulating conversations.
The author gratefully acknowledges also Bartosz Naskręcki and Aleksandra Kaim-Garnek,
whose comments helped to considerably improve the exposition of the paper.
Some of the results in this paper were obtained during the author's stays in Fort Collins and in Paris. The author would like to thank his hosts, Rachel Pries and
Marc Hindry, for their warm hospitality. The author was supported by the research grant SONATINA 6 "The de~Rham cohomology of $p$-group covers" UMO-2022/44/C/ST1/00033
awarded by National Science Centre, Poland.

\section{Preliminaries} \label{sec:prelim}
In this section we introduce notation concerning algebraic curves and recall basic facts used throughout the paper.
For an arbitrary smooth projective curve $Y$ over a field $k$ we denote by $k(Y)$ the function field of $Y$ and by $g_Y$ its genus.
Also, we write $\ord_Q(f)$ for the order of vanishing of a function $f \in k(Y)$ at a point $Q \in Y(k)$.
Let $\mf m_{Y, Q}^n := \{ f \in k(Y) : \ord_Q(f) \ge n \}$ for any $n \in \ZZ$. To simplify notation, we write
$\Omega_Y$, $\Omega_{k(Y)}$ and $H^1_{dR}(Y)$ instead of $\Omega_{Y/k}$, $\Omega_{k(Y)/k}$ and $H^1_{dR}(Y/k)$.
We often identify a finite set $S \subset Y(k)$ with a reduced divisor in $\Divv(Y)$.
Thus e.g. $\Omega_Y(S)$ will denote the sheaf of logarithmic differential
forms with poles in $S$. In the sequel we often use residues of differential forms,
see e.g. \cite[Remark III.7.14]{Hartshorne1977} for relevant facts.\\

Let $G$ be a finite group and $\pi : X \to Y$ be a finite separable $G$-cover of smooth projective curves over a field $k$.
We identify $\Omega_{k(Y)}$ with a submodule of $\Omega_{k(X)}$ and
$k(Y)$ with a subfield of $k(X)$. We denote the ramification index of $\pi$ at $P \in X(k)$ by $e_{X/Y, P}$ and
by $G_{P, i}$ the $i$-th ramification group of $\pi$ at $P$, i.e.
\[
G_{P, i} := \{ \sigma \in G : \sigma(f) \equiv f \mod{\mf m_P^{i+1}} \quad \textrm{ for every } f \in \mc O_{X, P} \}.
\]
Also, we use the following notation:
\begin{align*}
d_{X/Y, P}:= \sum_{i \ge 0} (\# G_{P, i} - 1), \quad
d_{X/Y, P}' := \sum_{i \ge 1} (\# G_{P, i} - 1), \quad
d_{X/Y, P}'' := \sum_{i \ge 2} (\# G_{P, i} - 1)
\end{align*}
($d_{X/Y, P}$ is the exponent of the different of $k(X)/k(Y)$ at $P$, cf. \cite[Proposition~IV.\S 1.4]{Serre1979}).
Recall that for any $P \in X(k)$ and $\omega \in \Omega_{k(Y)}$:
\begin{equation} \label{eqn:valuation_of_diff_form}
\ord_P(\omega) = e_{X/Y, P} \cdot \ord_{\pi(P)}(\omega) + d_{X/Y, P}
\end{equation}
(see e.g. \cite[Proposition IV.2.2~(b)]{Hartshorne1977}). For any sheaf $\mc F$ on $X$ and $Q \in Y(k)$ we abbreviate $(\pi_* \mc F)_Q$ to $\mc F_Q$
and $(\pi_* \mc F)_Q \otimes_{\mc O_{Y, Q}} \wh{\mc O}_{Y, Q}$ to $\wh{\mc F}_Q$. We write briefly $\tr_{X/Y}$ for the trace
\[
\tr_{k(X)/k(Y)} : k(X) \to k(Y).
\]
Note that it induces a map
\[
\Omega_{k(X)} \cong k(X) \otimes_{k(Y)} \Omega_{k(Y)} \to \Omega_{k(Y)},
\]
which we also denote by $\tr_{X/Y}$.
Recall that for any $\eta \in \Omega_{k(X)}$ and $Q \in Y(k)$:
\begin{equation} \label{eqn:residue_and_trace}
	\sum_{P \in \pi^{-1}(Q)} \res_P(\eta) = \res_Q(\tr_{X/Y}(\eta))
\end{equation}
(see \cite[Proposition 1.6]{Hubl_residual_representation} or \cite[p.~154, $(R_6)$]{Tate_residues_differentials_curves}).

In the sequel we use also the following description of cohomologies on a smooth projective curve $Y$. Let $S \subset Y(k)$ be a finite non-empty set. Write $U := Y \setminus S$ and let $\eta$ be the generic point of $Y$. For any locally free sheaf $\mc F$ of finite rank we have a natural isomorphism (cf. \cite[Lemma~5.2]{Garnek_p_gp_covers}):
\begin{align} \label{eqn:formula_for_H1}
	H^1(Y, \mc F) &\cong \coker(\mc F(U) \to \bigoplus_{Q \in S} \mc F_{\eta}/\mc F_{Q}),
\end{align}
Similarly, if $\mc F^{\bullet} = (\mc F^0 \stackrel{d}{\longrightarrow} \mc F^1)$ is a cochain
complex of locally free $\mc O_Y$-modules of finite rank with a $k$-linear differential
then (cf. \cite[Lemma~6.2]{Garnek_p_gp_covers}):
\begin{align}\label{eqn:formula_for_H1_dR}
	\HH^1(Y, \mc F^{\bullet}) &\cong Z^1_S(\mc F^{\bullet})/B^1_S(\mc F^{\bullet}),
\end{align}
where:
\begin{align*}
	Z^1_{S}(\mc F^{\bullet}) &:= \{ (\omega, (h_Q)_{Q \in S} ) : \omega \in \mc F^1(U), 
	h_Q \in \mc F^0_{\eta}, \, \omega - d h_Q \in \mc F^1_Q \quad \forall {Q \in S} \},\\
	B^1_{S}(\mc F^{\bullet}) &:= \{ (dh, (h + h_Q)_{Q \in S} ) : h \in \mc F^0(U), \,
	h_Q \in \mc F^0_Q \quad \forall {Q \in S} \}.
\end{align*}
Suppose now that $Y = \bigsqcup_{i = 1}^r Y_i$ is a disjoint union of smooth projective curves.
Note that $H^0(Y, \Omega_Y) = \bigoplus_{i = 1}^r H^0(Y_i, \Omega_{Y_i})$, $H^1(Y, \mc O_Y) = \bigoplus_{i = 1}^r H^1(Y_i, \mc O_{Y_i})$,
etc. By abuse of notation, we will denote by $k(Y)$ the total fraction field of $Y$, i.e. $k(Y) := \bigoplus_{i = 1}^r k(Y_i)$.

\section{Cohomology of HKG-covers} \label{sec:hkg}
Let $k$ and $G$ be as in Section~\ref{sec:intro}. Suppose $\ms B$ is a $k$-algebra with a $k$-linear
action of $G$ such that $\ms A := \ms B^G$ equals $k[[x]]$. Results of Harbater (cf. \cite{Harbater_moduli_of_p_covers}) and of Katz and Gabber (cf.~\cite{Katz_local_to_global}) imply that there exists a unique $G$-cover $\pi : \ms X \to \PP^1$ ramified only over $\infty$
and such that there exists an isomorphism $\wh{\mc O}_{X, Q} \cong \ms B$ of $k[G]$-algebras. We call $\pi : \ms X \to \PP^1$ the \emph{Harbater--Katz--Gabber cover} (in short: \emph{HKG-cover}) associated to $(\ms B, G)$.
The following natural question arises:
\begin{Question}
	How to describe the invariants of the curve $\ms X$ (e.g. its Hodge and de~Rham cohomologies) in terms of $\ms B$?
\end{Question}
This problem was studied in several papers, see e.g. \cite{Kontogeorgis_Tsouknidas_Cohomological_HKG}, \cite{Kontogeorgis_Tsouknidas_generating_set_HKG}
and \cite{Karanikolopoulos_Kontogeorgis_Autos}. The listed articles
give for instance a way of computing $H^0(\ms X, \Omega_{\ms X})$. However, this description depends on the choice of the Artin--Schreier tower and thus is impractical for our purposes. We offer an alternative approach
using certain distinguished elements in the total fraction field of $\ms B$.\\

We start by introducing the needed notation. Let $\ms L$ be be the total fraction field
of $\ms B$. Then $\ms B = \prod_{i = 1}^r k[[t_i]]$ and $\ms L = \prod_{i=1}^r k((t_i))$. Similarly, let $\ms K = k((x))$ be the fraction field of $\ms A$ and $\mf m_{\ms A} := (x)$ be the
maximal ideal of $\ms A$. Let $\mf m_i = (t_i)$ be the unique maximal ideal of $k[[t_i]]$; we will identify it with the corresponding ideal of $\ms B$. Also, let $\mf m_{\ms B} := \bigcap_{i=1}^r \mf m_i$ be the Jacobson radical of $\ms B$
and let $\ord_{\mf m_i} : \ms L \to \ZZ \cup \{ \infty \}$ be the composition of the valuation $k((t_i)) \to \ZZ \cup \{ \infty \}$
associated to $\mf m_i$ with the surjection $\ms L \twoheadrightarrow k((t_i))$. We denote by $G_j(\mf m_i/\mf m_{\ms A})$ the $j$th ramification
group of $\mf m_i$ over $\mf m_{\ms A}$. Note that in particular, $G_0(\mf m_i/\mf m_{\ms A}) \cong \Gal(k((t_i))/\ms K)$ is the decomposition group of
$\mf m_i$ over $\mf m_{\ms A}$. From now on we make the following assumption:
\begin{center}
	$(*)$ \quad $G_0(\mf m_1/\mf m_{\ms A})$ is a normal subgroup of $G$. 
\end{center}
This implies in particular, that all the extensions $k((t_1))/\ms K, \ldots, k((t_r))/\ms K$ are isomorphic and $G_0(\mf m_1/\mf m_{\ms A})$ is the decomposition group
of $\mf m_i$ over $\mf m_{\ms A}$ for every $i = 1, \ldots, r$. Thus we may identify $\ms L$ with $\prod_{G/G_0} k((t_1))$. We denote also from now on $G_i := G_i(\mf m_1/\mf m_{\ms A})$. 
Let $e = e(\mf m_i/\mf m_{\ms A}) := \# G_0$ be the ramification index of
$\mf m_i$ over $\mf m_{\ms A}$. Similarly, let
\begin{equation*}
	d := \sum_{i \ge 0} (\# G_i - 1), \quad 
	d' := \sum_{i \ge 1} (\# G_i - 1), \quad
	d'' := \sum_{i \ge 2} (\# G_i - 1).
\end{equation*}
For any topological $k$-algebra~$C$, denote by
$\Omega_{C}$ the module of continuous $k$-linear K\"{a}hler differentials.
In particular $\Omega_{\ms A} = k[[x]] \, dx$, $\Omega_{\ms K} = k((x)) \, dx$, $\Omega_{\ms B} = \bigoplus_{i = 1}^r \ms B \, dt_i$,
$\Omega_{\ms L} = \bigoplus_{i = 1}^r \ms L \, dt_i$. Similarly, let
\[
\Omega_{\ms A}^{\log} = \frac{1}{x} \Omega_{\ms A}, \quad \Omega_{\ms B}^{\log} = \frac{1}{t_1 \cdot t_2 \cdot \ldots \cdot t_r} \Omega_{\ms B}.
\]
be the logarithmic K\"{a}hler differentials.\\

Write $R = \pi^{-1}(\infty)$ for the ramification set
of $\pi$ and denote $U := \PP^1 \setminus \{ \infty \}$, $V :=  \pi^{-1}(U)$. For a future use, note that:
\begin{equation} \label{eqn:HKG_is_Ind_HKG}
	\ms X = \Ind^G_{G_0} \ms X^{\circ},
\end{equation}
where $\ms X^{\circ} \to \PP^1$ is the HKG-cover associated to $(k[[t_1]], G_Q)$ and $\Ind^G_{G_0} \ms X^{\circ}$ is the disjoint union $\bigsqcup_{G/G_0} \ms X^{\circ}$ with the natural action of $G$. The Riemann--Hurwitz formula (cf. \cite[Corollary IV.2.4]{Hartshorne1977}) for the cover $\ms X^{\circ} \to \PP^1$ implies that $g_{\ms X^{\circ}} = \frac 12 d''$. \\

We say that $z \in \ms L$ is a \emph{magical element} for the extension $\ms L/\ms K$, if
$\ord_{\mf m_i}(z) \ge -d'$ for every $i = 1, \ldots, r$ and $\tr_{\ms L/\ms K}(z) \in \ms A^{\times}$.
In this context, we denote by $z^{\vee}$ the dual element of $z$ with respect to the trace pairing. In other words,
$z^{\vee}$ is defined by the following equalities for all $g_1, g_2 \in G$:
\[
	\tr_{\ms L/\ms K}(g_1(z) \cdot g_2(z^{\vee})) =
	\begin{cases}
		1, & g_1 = g_2,\\
		0, & \textrm{otherwise.}
	\end{cases}
\]

The following is the main result of this section.
\begin{Theorem} \label{thm:cohomology_of_HKG_etale_algebras}
	Keep the above notation. In particular, let $\ms X \to \PP^1$ be the HKG-cover corresponding to
	$(\ms B, G)$. Suppose that $z \in \ms L$ is a magical element.
	Then we have the following isomorphisms of $k[G]$-modules:
	\begin{align*}
		H^0(\ms X, \Omega_{\ms X}) &\cong \frac{\Omega_{\ms B}^{\log}}{\bigoplus_{g \in G} g^*(z) \Omega_{\ms A}^{\log}},\\
		H^1(\ms X, \mc O_{\ms X}) &\cong \frac{\bigoplus_{g \in G} g^*(z^{\vee}) \mf m_{\ms A}}{\mf m_{\ms B}},\\
		H^1_{dR}(\ms X) &\cong \left\{ (\omega, \nu) \in \frac{\Omega_{\ms L}}{\bigoplus_{g \in G} g^*(z) \Omega_{\ms A}^{\log}} \times \frac{\bigoplus_{g \in G} g^*(z^{\vee}) \mf m_{\ms A}}{\mf m_{\ms B}}:
		\quad \omega - d\nu \in \Omega_{\ms B}^{\log} \right\}.
	\end{align*}
\end{Theorem}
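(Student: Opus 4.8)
\textbf{Proof strategy for Theorem~\ref{thm:cohomology_of_HKG_etale_algebras}.}
The plan is to apply the Čech-style descriptions of $H^1(Y, \mc F)$ and $\HH^1(Y, \mc F^{\bullet})$ recalled at the end of Section~\ref{sec:prelim} to the curve $\ms X$ with the single point $S = R = \pi^{-1}(\infty)$ removed, and then translate all the global and local data at $\infty$ into the algebra $\ms B$ via the canonical identification $\wh{\mc O}_{\ms X, \infty} \cong \ms B$. The key input that makes the global term disappear is that $\ms X \to \PP^1$ is ramified \emph{only} over $\infty$; hence on the affine open $V = \pi^{-1}(U)$ the cover is étale and, crucially, $V$ is an affine curve whose coordinate ring is a free $\mc O_U = k[x]$-module, so $H^i(V, \mc O_{\ms X}) = 0$ and $H^i(V, \Omega_{\ms X}) = 0$ for $i>0$. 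Thus the cohomology of $\ms X$ is computed purely by the "boundary" data at $\infty$, i.e. by the quotients $\mc F_\eta / \mc F_\infty$, which after completion become quotients of the $\ms L$-objects by the $\ms B$-objects.

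First I would treat $H^0(\ms X, \Omega_{\ms X})$. Using the standard exact sequence $0 \to H^0(\ms X, \Omega_{\ms X}) \to \Omega_{k(\ms X)} \to \bigoplus_{P} \Omega_{k(\ms X)}/\Omega_{\ms X, P}$ — or more efficiently the fact that a global differential is one with no poles anywhere — I would argue that a global differential is determined by its restriction to the formal neighborhood of $R$, and the condition of being regular on $V$ together with regularity at $R$ pins it down. The role of the magical element $z$ is to furnish an explicit $k[G]$-equivariant splitting: the point is that $\Omega_{\ms X}$ restricted over the complement of $\infty$, pushed to $\PP^1$, is a free $k[x]$-module, and $z$ (with its $\ord_{\mf m_i}(z) \ge -d'$ bound, ensuring $z\,\Omega_{\ms A}^{log} \subseteq \Omega_{\ms B}^{log}$ by~\eqref{eqn:valuation_of_diff_form}) together with $\tr_{\ms L/\ms K}(z) \in \ms A^\times$ (ensuring the trace form is a perfect pairing, so the $g^*(z)$, $g\in G$, span $\ms L$ over $\ms K$) lets one write $\bigoplus_{g\in G} g^*(z)\,\Omega_{\ms A}^{log}$ as the "global/$V$-side" lattice inside $\Omega_{\ms L}$. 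Then $H^0(\ms X, \Omega_{\ms X})$ is the differentials in $\Omega_{\ms B}^{log}$ modulo those coming from $V$, giving the stated quotient. Dually, for $H^1(\ms X, \mc O_{\ms X})$ I would use the coker-description with $\mc F = \mc O_{\ms X}$: $H^1(\ms X, \mc O_{\ms X}) \cong \mc O_{\ms X, \eta}/(\mc O_{\ms X}(V) + \wh{\mc O}_{\ms X, \infty})$, and then identify $\mc O_{\ms X}(V)$ inside $\ms L$ with $\bigoplus_{g\in G} g^*(z^\vee)\,\mf m_{\ms A}$, where $z^\vee$ is the element dual to $z$ under the trace pairing (I expect $z^\vee$ to be defined earlier or in this section; it should satisfy $\tr_{\ms L/\ms K}(z^\vee \cdot g^*(z)) = \delta$-type relations). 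Serre duality $H^1(\ms X,\mc O_{\ms X}) \cong H^0(\ms X,\Omega_{\ms X})^\vee$ as $k[G]$-modules can be used as a consistency check, matching the two formulas.

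Finally, for $H^1_{dR}(\ms X)$ I would feed the complex $\mc F^{\bullet} = (\mc O_{\ms X} \xrightarrow{d} \Omega_{\ms X})$ into the $Z^1_S/B^1_S$ description with $S = \{\infty\}$ (lifted to $R$), identifying $\mc F^1(V) = \Omega_{\ms X}(V)$ with $\bigoplus_g g^*(z)\,\Omega_{\ms A}^{log}$ and $\mc F^0(V) = \mc O_{\ms X}(V)$ with $\bigoplus_g g^*(z^\vee)\,\mf m_{\ms A}$, and $\mc F^1_\eta = \Omega_{\ms L}$, $\mc F^0_\eta = \ms L$, $\mc F^1_\infty = \Omega_{\ms B}^{log}$... wait, here one must be careful: $\mc F^1_\infty = \wh{\Omega}_{\ms X, \infty}$ is the \emph{regular} differentials $\Omega_{\ms B}$, not the logarithmic ones. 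I would reconcile this by the standard observation that the de~Rham class of $(\omega, \nu)$ with $\nu$ having a pole is unchanged by subtracting $d\nu$, and the logarithmic differentials arise precisely because $d(\mf m_{\ms A}^{-n}) \in \Omega_{\ms A}^{log}$; a short lemma comparing $Z^1$ computed with $\Omega_{\ms B}$ versus $\Omega_{\ms B}^{log}$ (using that the "extra" logarithmic part is exactly killed by the $d\nu$ with $\nu \in \ms L$) gives the cleaner stated form. The element $\nu$ in the theorem lives in the same quotient as $H^1(\ms X, \mc O_{\ms X})$, and the compatibility condition $\omega - d\nu \in \Omega_{\ms B}^{log}$ is precisely the local condition at $\infty$ from $Z^1_S$. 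The main obstacle I anticipate is \textbf{bookkeeping of the logarithmic twist}: showing rigorously that the "$V$-side" lattice $\Omega_{\ms X}(V)$, pushed into $\Omega_{\ms L}$ and intersected/summed with the completed stalks, equals $\bigoplus_{g\in G} g^*(z)\,\Omega_{\ms A}^{log}$ on the nose — this requires the full force of the magical-element hypotheses and a careful valuation count at each $\mf m_i$ using~\eqref{eqn:valuation_of_diff_form}, plus checking the sum $\bigoplus_g$ is actually direct (equivalently, that $\{g^*(z)\}_{g}$ is a $\ms K$-basis of $\ms L$), which is where $\tr_{\ms L/\ms K}(z) \in \ms A^\times$ enters decisively.
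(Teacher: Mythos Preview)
Your high-level plan (use the \v{C}ech description at the single point $\infty$ and pass to completions) matches the paper, but there is a genuine gap: you have the roles of $S_G(z) := \bigoplus_{g} g^*(z)\,\Omega_{\ms A}^{log}$ and $H^0(V,\Omega_{\ms X})$ reversed. These two lattices are \emph{not} equal inside $\Omega_{\ms L}$; they are \emph{complementary}. Concretely, $\Omega_{\ms X}(V)$ is a $k[x]$-module (not a $k[[x]]$-module) containing for instance $dx$, which certainly does not lie in $S_G(z)$; and conversely $z\,\frac{dx}{x} \in S_G(z)$ is not regular on $V$ in general. So your claim ``identifying $\mc F^1(V)=\Omega_{\ms X}(V)$ with $\bigoplus_g g^*(z)\,\Omega_{\ms A}^{log}$'' and the analogous statement for $\mc O_{\ms X}(V)$ are false as written, and the expectation that this equality would follow from a ``valuation count'' is misplaced. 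Likewise, a global holomorphic differential is an element of the \emph{intersection} $\Omega_{\ms B}^{log}\cap H^0(V,\Omega_{\ms X})$, not of a quotient of $\Omega_{\ms B}^{log}$ by ``things coming from $V$''; to turn this intersection into the quotient $\Omega_{\ms B}^{log}/S_G(z)$ you need precisely the splitting $\Omega_{\ms B}^{log}=S_G(z)\oplus H^0(\ms X,\Omega_{\ms X})$.

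The missing ingredient is the paper's Proposition~\ref{prop:key_fact_omega_L}: the direct-sum decomposition $\Omega_{\ms L}=S_G(z)\oplus H^0(V,\Omega_{\ms X})$. This is not bookkeeping; it is proved by induction on $\#G$, taking a central subgroup $H\cong\ZZ/p$, using that $z_H:=\tr_H(z)$ is again magical for $\ms L^H/\ms K$ (Lemma~\ref{lem:zH_is_magical}), and crucially that $H^0(V,\Omega_{\ms X})$ is a \emph{free} $k[G]$-module (Lemma~\ref{lem:H0VX_is_free_kG_module}), so that the relevant $H^1(H,-)$ vanishes. Once this decomposition is in hand, intersecting with $\Omega_{\ms B}^{log}$ and using the residue theorem gives the $H^0$ statement. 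The paper then gets the $H^1$ statement by Serre duality (via the residue pairing), not by a direct \v{C}ech calculation, and for de~Rham it computes hypercohomology of the twisted complex $\mc O_{\ms X}(-R)\to\Omega_{\ms X}(R)$ (which produces $\Omega_{\ms B}^{log}$ and $\mf m_{\ms B}$ directly, resolving your worry about the logarithmic twist) and concludes with a five-lemma/diagram-chase comparing the Hodge--de~Rham sequences.
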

\noindent For future use, recall that for any $f \in \ms L$:
\begin{equation} \label{eqn:valuation_of_trace}
	\tr_{\ms L/\ms K}(f) \in \mf m_{\ms A}^{\alpha},
\end{equation}
where $\alpha := \min_{i=1, \ldots, r} \lfloor (\ord_{\mf m_i}(f)+d)/e \rfloor$.
For the proof see \cite[Lemma 1.4 (b)]{Kock_galois_structure}. 

We list now some properties of magical elements. 
Note that if $z \in \ms L$ is a magical element for $\ms L/\ms K$ then there exists $1 \le i \le r$ such that $\ord_{\mf m_i}(z) = -d'$. Indeed, if we would have $\ord_{\mf m_i}(z) > -d'$ for $i = 1, \ldots, r$, then $\tr_{\ms L/\ms K}(z) \in \mf m_{\ms A}$ by~\eqref{eqn:valuation_of_trace}, which would yield a contradiction.
We prove now a partial converse of this fact.
\begin{Lemma} \label{lem:criterion_magical}
	Keep the above assumptions. If $z \in \ms L$ satisfies $\ord_{\mf m_i}(z) \ge -d'$
	for all $i = 1, \ldots, r$ and equality holds for precisely one $i$, then $z$ is a magical element for $\ms L/\ms K$.
\end{Lemma}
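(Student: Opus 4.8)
The plan is to verify the only condition in the definition of a magical element that is not part of the hypothesis, namely that $\tr_{\ms L/\ms K}(z) \in \ms A^{\times}$; the assumption already supplies $\ord_{\mf m_i}(z) \ge -d'$ for all $i$. After relabelling the $\mf m_i$ I may assume that $\ord_{\mf m_1}(z) = -d'$ and $\ord_{\mf m_i}(z) \ge -d'+1$ for $i = 2, \dots, r$.

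First I would check that $\tr_{\ms L/\ms K}(z)$ has no pole. By~\eqref{eqn:valuation_of_trace}, $\tr_{\ms L/\ms K}(z) \in \mf m_{\ms A}^{\alpha}$ with $\alpha = \min_{i} \lfloor (\ord_{\mf m_i}(z) + d)/e \rfloor$; since $\ord_{\mf m_i}(z) + d \ge -d' + d = e-1$ for every $i$, one gets $\alpha \ge \lfloor (e-1)/e \rfloor = 0$, so $\tr_{\ms L/\ms K}(z) \in \ms A = k[[x]]$. Consequently $\tr_{\ms L/\ms K}(z) \in \ms A^{\times}$ if and only if its constant term is nonzero, i.e.\ if and only if
\[
\res_{\mf m_{\ms A}}\!\left(\tr_{\ms L/\ms K}(z)\cdot \tfrac{dx}{x}\right) \ne 0.
\]

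Next I would compute this residue. Since $\tfrac{dx}{x} \in \Omega_{\ms K}$ and the induced trace $\Omega_{\ms L} \cong \ms L \otimes_{\ms K}\Omega_{\ms K} \to \Omega_{\ms K}$ is $\ms K$-linear, one has $\tr_{\ms L/\ms K}(z)\cdot \tfrac{dx}{x} = \tr_{\ms L/\ms K}\!\big(z\cdot\tfrac{dx}{x}\big)$, so the local analogue of~\eqref{eqn:residue_and_trace} gives
\[
\res_{\mf m_{\ms A}}\!\left(\tr_{\ms L/\ms K}(z)\cdot \tfrac{dx}{x}\right) = \sum_{i=1}^{r}\res_{\mf m_i}\!\left(z_i\cdot \tfrac{dx}{x}\right),
\]
where $z_i$ is the image of $z$ in $k((t_i))$. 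Now $\tfrac{dx}{x}$ has order $-1$ at $\mf m_{\ms A}$, so by~\eqref{eqn:valuation_of_diff_form} it has order $-e+d$ at $\mf m_i$, whence $\ord_{\mf m_i}(z_i\cdot\tfrac{dx}{x}) = \ord_{\mf m_i}(z) + d - e$. Using $d - d' = \#G_0 - 1 = e-1$: for $i \ge 2$ this order is $\ge (-d'+1) + d - e = 0$, so $z_i\cdot\tfrac{dx}{x}$ is holomorphic at $\mf m_i$ and $\res_{\mf m_i}(z_i\cdot\tfrac{dx}{x}) = 0$; for $i = 1$ it equals $-d' + d - e = -1$, so $z_1\cdot\tfrac{dx}{x}$ has a pole of order exactly $1$ at $\mf m_1$, and a differential on $k((t_1))$ with a simple pole has nonzero residue. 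Hence the sum above collapses to $\res_{\mf m_1}(z_1\cdot\tfrac{dx}{x}) \ne 0$, so $\tr_{\ms L/\ms K}(z) \in \ms A^{\times}$ and $z$ is magical.

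The substantive point is the bookkeeping identity $d - d' = e-1$, which is exactly what forces the order of $z_1\cdot\tfrac{dx}{x}$ upstairs to be precisely $-1$ (rather than something $\le -2$ or $\ge 0$); everything else is a routine assembly of the quoted identities~\eqref{eqn:valuation_of_diff_form},~\eqref{eqn:residue_and_trace} and~\eqref{eqn:valuation_of_trace}. I would also emphasize where the hypothesis that equality holds at \emph{precisely one} $i$ enters: it guarantees that exactly one summand in the residue sum is a nonzero simple-pole residue and all others vanish, so that no cancellation among several simple-pole contributions can destroy the conclusion.
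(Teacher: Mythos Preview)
Your proof is correct and takes a genuinely different route from the paper. The paper's argument constructs an explicit auxiliary element $z' = y_1^{p-1}\cdots y_n^{p-1}$ from Artin--Schreier generators of $k((t_1))/\ms K$ in standard form (invoking \cite[Lemmas~7.2 and~8.1]{Garnek_p_gp_covers}), notes that $\tr_{k((t_1))/\ms K}(z') = \pm 1$ and $\ord_{\mf m_1}(z') = -d'$, and then compares $z_1$ to a scalar multiple of $z'$ to force $\tr(z_1)\not\equiv 0\pmod{\mf m_{\ms A}}$. You sidestep this auxiliary construction entirely: you detect the nonvanishing of the constant term of $\tr_{\ms L/\ms K}(z)$ as a residue, push it through the local trace--residue compatibility, and reduce to the triviality that a differential with a simple pole has nonzero residue. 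The identity $d-d'=e-1$ does exactly the same work in both proofs (it is what makes~\eqref{eqn:valuation_of_trace} sharp at $\mf m_1$ and slack at the other $\mf m_i$), but your packaging is shorter, more conceptual, and self-contained in that it does not appeal to the prequel. The paper's route, on the other hand, has the incidental benefit of exhibiting the concrete element $z'$, which is reused elsewhere in that series of papers.
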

\begin{proof}
	Let $z = (z_1, \ldots, z_r) \in \ms L = \prod_{i = 1}^r k((t_i))$. Suppose that
	$\ord_{\mf m_1}(z) = -d'$ and $\ord_{\mf m_i}(z) > -d'$ for $i = 2, \ldots, r$.
	Note that by~\eqref{eqn:valuation_of_trace} we have $\tr_{\ms L/\ms K}(z_1) \in \ms A$ and $\tr_{\ms L/\ms K}(z_i) \in \mf m_{\ms A}$ for $i = 2, \ldots, r$.
	Let $\ms K = \ms K_0 \subset \ms K_1 \subset \ldots \subset \ms K_n = k((t_1))$ be a tower of $\ZZ/p$-extensions.
	Note that $\ms K_i$ is a field with a discrete valuation $\ord_{\ms K_i}$.
	Let $\ms K_{i+1}/\ms K_i$ be given by the Artin--Schreier equation:
	\[
	y_i^p - y_i = h_i \in \ms K_i \quad \textrm{ for } i = 0, \ldots, n-1.
	\]
	Note that $\ord_{\ms K_i}(h_i) < 0$ for every $i = 0, \ldots, n-1$, since otherwise $\ms K_{i+1}$ wouldn't be a field.
	Without loss of generality, we may assume that $p \nmid \ord_{\ms K_i}(h_i)$ for every $i = 1, \ldots, r$.
	In this context one often says that $y_1, \ldots, y_n$ are the Artin--Schreier generators for $k((t_1))/\ms K$ in standard form.
	Then \cite[Lemmas~7.2 and~8.1]{Garnek_p_gp_covers} show that $z' := y_1^{p-1} \cdot \ldots \cdot y_n^{p-1}$
	satisfies $\tr_{\ms L/\ms K}(z') = \pm 1$ and $\ord_{\mf m_1}(z') = -d'$. Thus, for some $c \in k^{\times}$, $\ord_{\mf m_1}(z_1 - c z') > -d'$. But this implies by~\eqref{eqn:valuation_of_trace} that
	$\tr_{\ms L/\ms K}(z_1 - c z') \in \mf m_{\ms A}$. Therefore
	$\tr_{\ms L/\ms K}(z_1) \equiv \pm c \not \equiv 0 \pmod{\mf m_{\ms A}}$ and $\tr_{\ms L/\ms K}(z) = \sum_{i = 1}^r \tr_{k((t_i))/\ms K}(z_i) \in \ms A \setminus \mf m_{\ms A} = \ms A^{\times}$. 
	This ends the proof.
\end{proof}
\begin{Corollary} \label{cor:existence_of_magical}
	Every \'{e}tale $G$-Galois algebra $\ms L/\ms K$ satisfying $(*)$ has a magical element.
\end{Corollary}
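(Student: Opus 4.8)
The plan is to deduce the corollary immediately from Lemma~\ref{lem:criterion_magical} by exhibiting a single explicit element of $\ms L$ that has a pole of the prescribed order at exactly one place. Recall that, since $\ms L/\ms K$ is an étale $G$-Galois algebra satisfying~$(*)$, we have $\ms L = \prod_{i=1}^r k((t_i))$ with all factors isomorphic as extensions of $\ms K = k((x))$, so that the integer $d' = \sum_{i \ge 1}(\#G_i - 1) \ge 0$ is well defined (independent of the chosen factor). I would simply set
\[
  z := (t_1^{-d'}, 0, \ldots, 0) \in \prod_{i=1}^r k((t_i)) = \ms L.
\]
Then $\ord_{\mf m_1}(z) = -d'$, whereas $\ord_{\mf m_i}(z) = +\infty > -d'$ for $i = 2, \ldots, r$; in particular $\ord_{\mf m_i}(z) \ge -d'$ for every $i$ and equality holds for precisely one index. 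Lemma~\ref{lem:criterion_magical} applies verbatim and yields that $z$ is a magical element for $\ms L/\ms K$, which is exactly the assertion.

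There is essentially no obstacle remaining at this stage: the genuine content — producing, inside one ramified factor $k((t_1))/\ms K$, an element with unit trace down to $\ms A$ and pole order exactly $d'$ — has already been carried out inside the proof of Lemma~\ref{lem:criterion_magical}, via the Artin--Schreier standard-form computations of \cite[Lemmas~7.2 and~8.1]{Garnek_p_gp_covers}. The only points worth a brief remark are the degenerate ones. If some factor $k((t_i))/\ms K$ is unramified then, $k$ being algebraically closed, it is trivial, hence $\ms L = \prod_{g \in G} \ms K$ is split, $d' = 0$, and $z = (1, 0, \ldots, 0)$ has $\tr_{\ms L/\ms K}(z) = 1 \in \ms A^{\times}$, in agreement with Lemma~\ref{lem:criterion_magical}. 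If instead the factors are ramified then, $G$ being a $p$-group, $G_1 = G_0 \neq 1$, so $d' \ge \#G_1 - 1 \ge p - 1 \ge 1$ and the monomial $t_1^{-d'}$ is a genuine pole. In all cases the hypotheses of Lemma~\ref{lem:criterion_magical} are met, completing the proof.
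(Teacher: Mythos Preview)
Your proposal is correct and takes essentially the same approach as the paper: the paper's proof also sets $z := (t_1^{-d'}, 0, \ldots, 0) \in \ms L$ and invokes Lemma~\ref{lem:criterion_magical} directly. Your additional discussion of the degenerate cases ($d' = 0$ versus $d' \ge 1$) is harmless but not needed, since Lemma~\ref{lem:criterion_magical} already covers them uniformly.
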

\begin{proof}
	Let $z := (t_1^{-d'}, 0, \ldots, 0) \in \ms L = \prod_{i = 1}^r k((t_i))$.
	Then $z$ is a magical element for $\ms L/\ms K$ by Lemma~\ref{lem:criterion_magical}.
\end{proof}
In case when $\ms L$ is a field, the definition of a magical element simplifies.
\begin{Corollary} \label{cor:magical_for_field}
	Suppose that $G_0 = G$ (i.e. $\ms L = k((t_1))$). Then
	$z \in \ms L$ is a magical element for $\ms L/\ms K$ if and only if $\ord_{\mf m_1}(z) = -d'$.
\end{Corollary}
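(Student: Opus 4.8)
The plan is to read off both implications from Lemma~\ref{lem:criterion_magical} and the remark recorded just before it, specialized to the situation $G_0 = G$. In that case $r = 1$, the algebra $\ms L = k((t_1))$ is a field, $\ord_{\ms L} = \ord_{\mf m_1}$, and the conditions ``$\ord_{\mf m_i}(z) \ge -d'$ for all $i$'' and ``$\ord_{\ms L}(z) \ge -d'$'' coincide. So nothing genuinely new has to be proven; the corollary is a specialization of the material already in place.

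For the forward implication, I would suppose $z$ is a magical element. Then $\ord_{\ms L}(z) \ge -d'$ by definition, and it remains only to exclude strict inequality. This is exactly the observation in the paragraph preceding Lemma~\ref{lem:criterion_magical}, applied with $r = 1$: if $\ord_{\ms L}(z) > -d'$, then, using $d - d' = \# G_0 - 1 = e - 1$, one gets $\ord_{\ms L}(z) + d \ge e$, so the exponent $\alpha$ in~\eqref{eqn:valuation_of_trace} is at least $1$, whence $\tr_{\ms L/\ms K}(z) \in \mf m_{\ms A}$, contradicting $\tr_{\ms L/\ms K}(z) \in \ms A^{\times}$. I would simply cite that remark rather than repeat the computation.

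For the reverse implication, I would suppose $\ord_{\ms L}(z) = -d'$. With $r = 1$ this is precisely the hypothesis of Lemma~\ref{lem:criterion_magical} (the requirement that equality hold for exactly one index being vacuous when there is only one index), so the lemma immediately gives that $z$ is a magical element for $\ms L/\ms K$. There is no real obstacle here; the only point to verify is that Lemma~\ref{lem:criterion_magical} and the preceding remark were stated in enough generality to cover the case $r = 1$, which they are.
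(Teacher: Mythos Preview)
Your proposal is correct and follows essentially the same approach as the paper: one direction is Lemma~\ref{lem:criterion_magical} specialized to $r=1$, and the other is the observation (from~\eqref{eqn:valuation_of_trace}, recorded just before that lemma) that $\ord_{\ms L}(z) > -d'$ forces $\tr_{\ms L/\ms K}(z) \in \mf m_{\ms A}$. The only difference is cosmetic: the paper cites~\eqref{eqn:valuation_of_trace} directly rather than the derived remark, and presents the two implications in the opposite order.
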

\begin{proof}
	Lemma~\ref{lem:criterion_magical} implies that if
	$\ord_{\mf m_1}(z) = -d'$, then $\tr_{\ms L/\ms K}(z) \in \ms A^{\times}$. 
	For the converse implication, note that if $\ord_{\mf m_1}(z) > -d'$,
	then $\tr_{\ms L/\ms K}(z) \in \mf m_{\ms A}$ by \eqref{eqn:valuation_of_trace}.
\end{proof}
The following result will be crucial in the proof of Theorem~\ref{thm:cohomology_of_HKG_etale_algebras}.
\begin{Proposition} \label{prop:key_fact_omega_L}
	Keep the assumptions of Theorem~\ref{thm:cohomology_of_HKG_etale_algebras}.
	Then:
	\[
		\Omega_{\ms L} = \bigoplus_{g \in G} g^*(z) \Omega_{\ms A}^{\log} \oplus H^0(V, \Omega_{\ms X}).
	\]
\end{Proposition}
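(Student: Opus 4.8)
The plan is to compare the two sides of the claimed equality through the filtration of $\Omega_{\ms L}$ by order of vanishing along the ramification locus. First I would reduce to the connected case: by~\eqref{eqn:HKG_is_Ind_HKG} the curve $\ms X$, the space $H^0(V,\Omega_{\ms X})$, and the submodule $\sum_{g\in G}g^*(z)\Omega_{\ms A}^{log}$ are all induced from the corresponding $G_0$-objects (the order set of the $G_0$-lattice generated by $z$ will be seen not to depend on the choice of magical element, so one may as well work with the standard one $(t_1^{-d'},0,\dots,0)$ of Corollary~\ref{cor:existence_of_magical}), so one may assume $G=G_0$, $\ms L=k((t))$ a field, $\ms B=k[[t]]$, $R=\{P\}$, $V=\ms X\setminus\{P\}$; by Corollary~\ref{cor:magical_for_field} then $\ord_P(z)=-d'$ and $\tr_{\ms L/\ms K}(z)\in\ms A^{\times}$. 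Write $M:=\sum_{g\in G}g^*(z)\Omega_{\ms A}^{log}$ and identify $N:=H^0(V,\Omega_{\ms X})$ with its image under expansion at $P$. Since $\ord_P(dx)=d$ by~\eqref{eqn:valuation_of_diff_form}, $\ord_P(\tfrac{dx}{x})=d-e=d'-1$, and because $\ord_P(g^*(z))=-d'$ for every $g$, each nonzero element of $M$ has order $\ge-1$ at $P$; in particular $M\subseteq\Omega_{\ms B}^{log}$.

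Next I would compute the associated graded pieces of $M$ and $N$ for the order filtration $F^{n}=\{\eta:\ord_P(\eta)\ge n\}$. For $N$: the orders at $P$ of holomorphic differentials on $\ms X$ are exactly the integers $\gamma-1$ with $\gamma$ a Weierstrass gap at $P$; a differential regular on $V$ with a pole at $P$ cannot have a simple pole (the residue theorem is violated since $P$ is the only pole), while every pole order $\ge2$ does occur by Riemann--Roch; hence the order set of the nonzero elements of $N$ is $\{\gamma-1:\gamma\ \text{a gap}\}\cup\ZZ_{\le-2}$. The heart of the argument is the parallel statement for $M$: \emph{the order set of the nonzero elements of $M$ is $W-1$}, where $W\subseteq\ZZ_{\ge0}$ is the Weierstrass semigroup of $\ms X$ at $P$. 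Granting this, and since $W$ and the set of gaps partition $\ZZ_{\ge0}$, the supports of $\mathrm{gr}(M)$ and $\mathrm{gr}(N)$ are complementary subsets of $\ZZ=\mathrm{supp}\,\mathrm{gr}(\Omega_{\ms L})$, so $\mathrm{gr}(M)\oplus\mathrm{gr}(N)=\mathrm{gr}(\Omega_{\ms L})$. A successive-approximation argument then lifts this to $\Omega_{\ms L}=M\oplus N$: the filtration is exhaustive and separated, $\Omega_{\ms L}$ and $M$ are complete, and $N\cap F^{m}=0$ for $m>2g_{\ms X}-2$, so the polar part of any $\eta\in\Omega_{\ms L}$ is absorbed by finitely many elements of $N$, after which the regular part is recovered as a convergent series in $M$ together with finitely many elements of $N$. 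Directness of the sum of the $g^*(z)\Omega_{\ms A}^{log}$ is the $k((x))$-linear independence of the $g^*(z)$, i.e.\ the normal basis property, which follows from the magical conditions by the standard-form computation of \cite[\S\S 7--8]{Garnek_p_gp_covers} together with $\tr_{\ms L/\ms K}(z)\ne0$.

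I expect the statement about the order set of $M$ to be the main obstacle. Dividing by $\tfrac{dx}{x}$ turns $M$ into the $\ms A$-lattice $\Lambda:=\sum_{g\in G}g^*(z)\,\ms A\subseteq\ms L$, and since $\ord_P(\tfrac{dx}{x})=d'-1$ the claim is equivalent to: the order set of $\Lambda$ is $W-d'$. I would prove this by relating $\Lambda$ to the ring $\mc O_{\ms X}(V)$ of functions regular away from $P$, whose pole orders at $P$ are by definition the elements of $W$: one checks that a magical element can be taken in standard form $y_1^{p-1}\cdots y_n^{p-1}$ coming from a defining Artin--Schreier tower (as in the proof of Lemma~\ref{lem:criterion_magical}), that such an element lies in $\mc O_{\ms X}(V)$ and generates it over $\mc O_{\PP^1}(\AA^1)=k[u]$, and that consequently $\Lambda=\mc O_{\ms X}(V)\otimes_{k[u]}\ms A$, whose order set is the reflection of $-W=\{\ord_P(f):f\in\mc O_{\ms X}(V)\}$ shifted by $\ord_P(z)=-d'$. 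Equivalently — and this is the concrete computation to be carried out — one verifies $\operatorname{length}_{\ms A}(\mf m_{\ms B}^{-d'}/\Lambda)=g_{\ms X}=\tfrac12 d''$, which amounts to evaluating $\ord_x$ of the discriminant $\det\!\big[\tr_{\ms L/\ms K}(g^*(z)h^*(z))\big]_{g,h\in G}$ using~\eqref{eqn:valuation_of_trace} and the magical identities; this, together with $M\subseteq\Omega_{\ms B}^{log}$ and the order disjointness above (which already gives $M\cap H^0(\ms X,\Omega_{\ms X})=0$), is all the input the proof needs.
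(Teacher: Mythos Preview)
Your approach is genuinely different from the paper's and the Weierstrass-gap idea is attractive, but there is a real gap at the step you yourself flag as the main obstacle, and the two fixes you propose do not close it.

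First, the identification $\Lambda=\mc O_{\ms X}(V)\otimes_{k[u]}\ms A$ is not well-defined: $u$ is a coordinate on $\AA^1=\PP^1\setminus\{\infty\}$ while $\ms A=k[[x]]$ is the completion at $\infty$, and there is no ring map $k[u]\to\ms A$ (in fact $u=1/x$). What is true is that $\{g^*(z_0)\}_g$ is a $k[u]$-basis of $\mc O_{\ms X}(V)$ for the specific element $z_0=y_1^{p-1}\cdots y_n^{p-1}$, and from this one can compute the order set of $\Lambda_{z_0}$ directly and check it equals $W-d'$; but that is a computation in its own right (you have to argue that the monomials $\prod y_i^{a_i}$ have pairwise distinct orders at $P$ and that these orders together with multiples of $e$ sweep out $-W$), not a consequence of a tensor-product identity. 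Second, even granting this for $z_0$, the Proposition is about an \emph{arbitrary} magical element~$z$. Different magical elements give different lattices $\Lambda_z$ (already for $G=\ZZ/2$, $z=y+1/x$ versus $z_0=y$), so your ``will be seen not to depend on the choice'' needs an argument you have not supplied. Third, the discriminant route only yields the covolume $\dim_k(\Omega_{\ms B}^{log}/M)=g_{\ms X}$; to conclude $M\oplus H^0(\ms X,\Omega_{\ms X})=\Omega_{\ms B}^{log}$ you still need either $M\cap H^0=0$ or $M+H^0=\Omega_{\ms B}^{log}$, and your justification of $M\cap H^0=0$ appeals back to the order disjointness you are trying to prove, which is circular. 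Finally, the reduction to the connected case is not justified for a general magical $z$: the submodule $S_G(z)$ is a sum of diagonal $\ms A$-lines in $\prod_i k((t_i))\,dt_i$ and is not visibly an induced module unless $z$ itself has a special form.

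By contrast, the paper avoids all of this by induction on $|G|$: pick a central subgroup $H$ of order~$p$ and push the problem down to $\ms X/H$ via the observation that $z_H:=\tr_{\ms L/\ms L^H}(z)$ is again magical (Lemma~\ref{lem:zH_is_magical}). The intersection $S_G(z)\cap H^0(V,\Omega_{\ms X})$ is killed by applying $(h-1)^i$ to land in the $G/H$-situation, and the quotient is handled using that both $S_G(z)$ and $H^0(V,\Omega_{\ms X})$ are free $k[H]$-modules (Lemma~\ref{lem:H0VX_is_free_kG_module}), so their $H^1(H,-)$ vanishes. This works uniformly for any magical $z$ and any $G$ satisfying $(*)$, with no need to analyze Weierstrass gaps or to single out a preferred magical element.
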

\noindent We prove Proposition~\ref{prop:key_fact_omega_L} by induction on $\# G$.
Before the proof we need two auxiliary results.
\begin{Lemma} \label{lem:zH_is_magical}
	Keep the assumptions of Theorem~\ref{thm:cohomology_of_HKG_etale_algebras}. Then for any $H \unlhd G$,
	the extension $\ms L^H / \ms K$ satisfies the condition~$(*)$ and has 
	$z_H := \tr_{\ms L/\ms L^H}(z)$ as a magical element.
\end{Lemma}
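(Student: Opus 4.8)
The plan is to translate the two defining properties of a magical element into ramification-theoretic statements about the tower $\ms K\subseteq \ms L^H\subseteq \ms L$, and then verify them by a direct computation with different exponents, using transitivity of the trace together with the estimate~\eqref{eqn:valuation_of_trace}.

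First I would pin down the structure of $\ms L^H$. Since $(*)$ gives $G_0\unlhd G$, we have identified $\ms L=\prod_{gG_0\in G/G_0}k((t_1))$, with $G$ permuting the factors through the left action on $G/G_0$ and $G_0$ acting on a factor as $\Gal(k((t_1))/\ms K)$. For $H\unlhd G$ the stabilizer in $H$ of the point $gG_0$ equals $H\cap gG_0g^{-1}=H\cap G_0$, so
\[
\ms L^H\cong\prod_{G/HG_0}\ms M,\qquad \ms M:=k((t_1))^{H\cap G_0},
\]
the fixed field of $H\cap G_0\le G_0=\Gal(k((t_1))/\ms K)$. Condition $(*)$ for $\ms L^H/\ms K$ follows at once: the Galois group $G/H$ permutes the $[G:HG_0]$ maximal ideals of $\ms L^H$, the decomposition group of any one of them is the stabilizer $HG_0/H$, and $HG_0/H\unlhd G/H$ because $HG_0\unlhd G$. (A conjugation argument as in Section~\ref{sec:hkg}, using that $G_0$ and $H$ are normal, shows that the invariants $e_1,d_1,d_M$ introduced below, hence the invariant ``$d'$'' of $\ms L^H/\ms K$, do not depend on the chosen maximal ideal, so the statement is meaningful.)

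The trace half of the magical-element condition is immediate from transitivity of the trace:
\[
\tr_{\ms L^H/\ms K}(z_H)=\tr_{\ms L^H/\ms K}\bigl(\tr_{\ms L/\ms L^H}(z)\bigr)=\tr_{\ms L/\ms K}(z)\in\ms A^{\times}.
\]
For the valuation half, write $z=(z_g)_{gG_0\in G/G_0}$ with each $z_g\in k((t_1))$ of order $\ge -d'$ in its copy of $k((t_1))$. The component of $z_H$ along each factor $\ms M$ of $\ms L^H$ is a sum of terms $\tr_{k((t_1))/\ms M}(z_g)$, so it suffices to bound the order of one such term. The extension $k((t_1))/\ms M$ is totally ramified with group $H\cap G_0$, and (being a subextension) has lower-numbering ramification groups $H\cap G_j$; thus its ramification index is $e_1:=\#(H\cap G_0)$ and its different exponent is $d_1:=\sum_{j\ge0}(\#(H\cap G_j)-1)$. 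Applying \cite[Lemma~1.4(b)]{Kock_galois_structure} to $k((t_1))/\ms M$ and using $\ord(z_g)\ge -d'$ gives
\[
\ord_{\ms M}\bigl(\tr_{k((t_1))/\ms M}(z_g)\bigr)\ge\Bigl\lfloor\tfrac{d_1-d'}{e_1}\Bigr\rfloor .
\]

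It remains to check $\lfloor(d_1-d')/e_1\rfloor\ge -d'_{\ms M/\ms K}$, where $d'_{\ms M/\ms K}:=d_M-(e_M-1)$ is the ``$d'$'' attached to $\ms L^H/\ms K$, with $d_M$ and $e_M:=\#(G_0/(H\cap G_0))=e/e_1$ the different exponent and ramification index of $\ms M/\ms K$. The multiplicativity of the different in the tower $\ms K\subseteq\ms M\subseteq k((t_1))$ gives $d=d_1+e_1d_M$; substituting this together with $d-d'=e-1$ yields
\[
\frac{d_1-d'}{e_1}+d'_{\ms M/\ms K}=\frac{d-d'-e}{e_1}+1=\frac{e_1-1}{e_1}\in[0,1),
\]
so in fact $\lfloor(d_1-d')/e_1\rfloor=-d'_{\ms M/\ms K}$. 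Hence every component of $z_H$ has order $\ge -d'_{\ms M/\ms K}$, and $z_H$ is a magical element for $\ms L^H/\ms K$, as claimed. The one genuinely delicate step is the third paragraph — correctly identifying the lower-numbering ramification filtrations of the two layers $k((t_1))/\ms M$ and $\ms M/\ms K$ and invoking the right tower formula for the different; once that bookkeeping is in place the final estimate is forced.
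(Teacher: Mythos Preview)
Your argument is correct and follows the same route as the paper's proof: verify $(*)$ for $\ms L^H/\ms K$, get the trace condition from transitivity of the trace, and get the valuation bound from Kock's estimate~\eqref{eqn:valuation_of_trace} applied to the layer $k((t_1))/\ms M$ together with the tower formula for the different. The only cosmetic difference is that the paper states (and uses) the transitivity formula directly for $d'$, namely $d'=d_1'+e_1\,d'_{\ms M/\ms K}$, whereas you use the standard formula $d=d_1+e_1 d_M$ for $d$ and then convert; the computations are equivalent and both yield $\lfloor(d_1-d')/e_1\rfloor=-d'_{\ms M/\ms K}$.
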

\begin{proof}
	Note that the maximal ideals of $\ms B^H$ are $\mf m_i^H$ for $i = 1, \ldots, r$. Since
	$G_0 \unlhd G$, we have $G_0(\mf m_i^H/\mf m_{\ms A}) = G_0 H/H \unlhd G/H$. Thus
	$\ms L^H/\ms K$ also satisfies the condition~$(*)$. By the transitivity of the different (cf. \cite[\S III.4,
	Proposition~8]{Serre1979}):
	\[
		d'(\mf m_1/\mf m_{\ms A}) = d'(\mf m_1/\mf m_1^H) + e(\mf m_1/\mf m_1^H) \cdot d'(\mf m_1^H/\mf m_{\ms A}).
	\]
	By~\eqref{eqn:valuation_of_trace} we see that for $i = 1, \ldots, r$:
	\begin{align*}
		\ord_{\mf m_i^H}(z_H) \ge \left\lfloor \frac{-d'(\mf m_1/\mf m_{\ms A}) + d(\mf m_1/\mf m_1^H)}{e(\mf m_1/\mf m_1^H)} \right\rfloor = -d'(\mf m_1^H/\mf m_{\ms A}).
	\end{align*}
	Moreover, $\tr_{\ms L^H/\ms K}(z_H) = \tr_{\ms L/\ms K}(z) \in \ms A^{\times}$. This ends the proof.
\end{proof}
\begin{Lemma} \label{lem:H0VX_is_free_kG_module}
	$H^0(V, \Omega_{\ms X})$ is a free $k[G]$-module.
\end{Lemma}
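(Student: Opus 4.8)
The plan is to show that $H^0(V,\Omega_{\ms X})$ is a free $k[G]$-module by exploiting the $G$-action on the affine curve $V = \pi^{-1}(U)$, which is \emph{free} because $\pi$ is unramified over $U = \PP^1 \setminus \{\infty\}$. Since a $G$-action on a quasi-projective variety that is free (i.e.\ all stabilizers are trivial) makes the module of global sections of any $G$-equivariant coherent sheaf into a module over the Azumaya-like algebra $k[G]$, and more concretely, since $V \to U$ is an étale $G$-torsor, descent gives $\Omega_{\ms X}|_V \cong \pi^* \Omega_U$, so that $H^0(V, \Omega_{\ms X}) \cong H^0(U, \pi_* \pi^* \Omega_U) \cong H^0(U, \Omega_U \otimes_{\mc O_U} \pi_* \mc O_V)$. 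Thus it suffices to show that $\pi_* \mc O_V$ is, locally on $U$, a free $k[G] \otimes_k \mc O_U$-module, which is exactly the statement that an étale $G$-torsor is Zariski-locally (even just fppf-locally, but here étale-locally and hence after suitable localization) trivial.

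The concrete steps I would carry out are as follows. First, I would note $V \to U$ is finite étale Galois with group $G$, hence $\pi_*\mc O_V$ is a locally free $\mc O_U$-module of rank $\#G$ carrying a $G$-action, and the natural map $k[G] \otimes_k \mc O_U \to \mc End_{\mc O_U}(\pi_* \mc O_V)$ makes $\pi_* \mc O_V$ into a $(k[G]\otimes_k\mc O_U)$-module which is everywhere locally free of rank one (this is the standard characterization of torsors; see e.g.\ that the "norm" pairing is perfect). Second, since $U$ is affine and $\pi_*\mc O_V$ is coherent, I would pass to the ring $A := \mc O(U) = k[x]$, so that $M := \mc O(V)$ is a $k[G]\otimes_k A = A[G]$-module, finitely generated and locally free of rank one over $A[G]$. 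Third, I would invoke that $A = k[x]$ is a principal ideal domain, hence $A[G]$ — while not commutative — still has the property that finitely generated projective modules over $A[G]$ are free whenever they are so "locally", because $k[x]$ is a PID and one can use a theorem on projective modules over group rings of finite groups over PIDs (this is essentially the argument that over $k[[x]]$ or $k[x]$ a $G$-torsor module is free; alternatively one reduces to the residue field $k$ at any closed point, where a locally-free-of-rank-one $k[G]$-module is just $k[G]$, and then lifts). Finally, $H^0(V,\Omega_{\ms X}) \cong \Omega_{A} \otimes_A M \cong A\,dx \otimes_A M \cong M$ as $A[G]$-modules (since $\Omega_U = \mc O_U\, dx$ is free of rank one over $\mc O_U$), so $H^0(V,\Omega_{\ms X})$ is free over $A[G]$, and a fortiori free as a $k[G]$-module.

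The main obstacle is the noncommutativity of $A[G] = k[x][G]$: one cannot simply cite Quillen–Suslin or the structure theorem for modules over a PID. The cleanest way around this, which I would adopt, is to argue directly that an étale $G$-torsor $V \to U$ with $U$ a smooth affine curve over an algebraically closed field is \emph{Zariski-locally trivial} — indeed any $G$-torsor over a smooth affine curve is locally trivial in the Zariski topology since $\mathrm{Br}$ and $H^1_{et}(-, G)$ for the relevant constant (here $p$-)group over such a base can be split after removing finitely many points, or more simply because a finite étale cover of a smooth affine curve over $k = \bar k$ becomes split after a further finite étale base change which we can absorb. Granting Zariski-local triviality, $\pi_*\mc O_V$ is Zariski-locally isomorphic to $\mc O_U \otimes_k k[G]$ as a $k[G]$-sheaf; then $H^0(U, \Omega_U \otimes \pi_*\mc O_V)$ is the global sections of a $k[G]$-equivariant vector bundle that is Zariski-locally $\Omega_U \otimes_k k[G]$, and a standard gluing/\v{C}ech argument over the affine curve $U$ (using $H^1(U, \Omega_U) $ and that $U$ is affine so higher cohomology of coherent sheaves vanishes) shows the space of global sections is a free $k[G]$-module of rank $\dim_k H^0(U,\Omega_U)$. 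Actually the swiftest route, which I would ultimately prefer for the writeup, is: $\Omega_{\ms X}|_V$ is a $G$-equivariant line bundle on $V$ on which $G$ acts freely, so $\Omega_{\ms X}|_V$ descends to a line bundle $\mc N$ on $U = V/G$ with $\Omega_{\ms X}|_V = \pi^*\mc N$; since $U$ is affine, $\mc N$ is generated by finitely many global sections, and $H^0(V, \pi^*\mc N) = H^0(U, \mc N) \otimes$ (regular representation) via the projection formula along the torsor — more precisely $H^0(V, \pi^*\mc N) \cong H^0(U,\mc N \otimes \pi_*\mc O_V)$ and $\pi_*\mc O_V$ is a locally free rank-one $(\mc O_U[G])$-module, hence (Serre's theorem: on an affine scheme a sheaf of modules over $\mc O_U[G]$ that is a direct summand of a free module, localizes to a projective $\mc O(U)[G]$-module, and over the Dedekind domain $\mc O(U)$ with $G$ finite, a rank-one locally free $\mc O(U)[G]$-module is free — this last is the key lemma I would isolate and prove by reduction to the localizations $\mc O(U)_{\mf p}$, which are DVRs, where freeness of a torsor module is classical). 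This reduces everything to the DVR case already implicit in the HKG setup, completing the proof.
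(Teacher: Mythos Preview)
Your overall strategy is sound and genuinely different from the paper's: you reduce to showing that $\mc O(V)$ is free as a $k[x][G]$-module (which indeed suffices, since $\Omega_U \cong \mc O_U\,dx$), and you correctly note that the torsor condition makes $\mc O(V)$ a rank-one projective $k[x][G]$-module. However, the arguments you offer for the key step --- upgrading ``projective of rank one'' to ``free'' --- do not go through as written.

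The claim that an \'etale $G$-torsor over a smooth affine curve over $\bar k$ is Zariski-locally trivial is \emph{false} in characteristic $p$ for $p$-groups: the Artin--Schreier cover $y^p - y = x$ over $\mathbb{A}^1_k$ is connected, hence remains a nontrivial torsor over every nonempty Zariski open of~$\mathbb{A}^1_k$. Your fallback (``reduce to the localizations $\mc O(U)_{\mf p}$'') correctly gives that $\mc O(V)_{\mf p}$ is free over $k[x]_{\mf p}[G]$ (since the latter is local), but you never explain how to pass from local freeness to global freeness; over a general ring this fails, so some argument is required. The clean fix, which you do not give, uses that $G$ is a $p$-group in characteristic~$p$: the augmentation ideal $J \subset k[x][G]$ satisfies $J = k[x] \otimes_k I_G$ and is \emph{nilpotent} (because $I_G$ is nilpotent in $k[G]$), with quotient $k[x][G]/J \cong k[x]$. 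Since $\mc O(V)/J\,\mc O(V)$ is a finitely generated projective $k[x]$-module of rank one (compare ranks after base change to $k(x)$ and use the normal basis theorem), it is free; lifting a generator via Nakayama for nilpotent ideals and comparing $k[x]$-ranks gives $\mc O(V) \cong k[x][G]$.

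By contrast, the paper's proof is explicit and constructive: it uses Riemann--Roch on the projective curve $\ms X^{\circ}$ to produce an element $z$ that is regular on $V^{\circ}$ with $\ord_P(z) = -d'$ at the unique point over~$\infty$, then invokes Corollary~\ref{cor:magical_for_field} and \cite[Proposition~3.1]{Garnek_p_gp_covers} to conclude that $\{g^*(z)\}_{g \in G_0}$ is a normal basis, so that $\pi_* \Omega_{\ms X^{\circ}}|_U = \bigoplus_{g \in G_0} g^*(z)\,\Omega_{\PP^1}|_U$ and induction to $G$ finishes. The paper's argument thus produces a concrete generator tied to the magical-element machinery used elsewhere, whereas your route (once repaired) gives a softer, purely module-theoretic proof that would apply to any $p$-group torsor over any affine $\mathbb{F}_p$-scheme with trivial Picard group.
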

\begin{proof}
	Let $V^{\circ}$ be the preimage of $\PP^1 \setminus \{\infty\}$ on $\ms X^{\circ}$.
	Using Riemann--Roch theorem, one sees that $H^0(\ms X^{\circ}, \mc O_{\ms X^{\circ}}(d' \cdot R)) \neq H^0(\ms X^{\circ}, \mc O_{\ms X^{\circ}}((d' - 1) \cdot R))$, since $\deg (d' \cdot R) = d' > 2g_{\ms X^{\circ}} - 2 = d'' - 2$.
	Let $z \in H^0(\ms X^{\circ}, \mc O_{\ms X^{\circ}}(d' \cdot R)) \setminus H^0(\ms X^{\circ}, \mc O_{\ms X^{\circ}}((d' - 1) \cdot R))$. Then $z$ is regular on~$V^{\circ}$ and $\tr_{\ms X^{\circ}/\PP^1}(z) \neq 0$ by Corollary~\ref{cor:magical_for_field}. Therefore $z$ is a normal element for $k((t_1))/\ms K$ by \cite[Proposition~3.1]{Garnek_p_gp_covers} and
	$\pi_* \Omega_{\ms X^{\circ}}|_U = \bigoplus_{g \in G_0} g^*(z) \Omega_{\PP^1}|_U$ as subsheaves of the constant sheaf $\pi_* \Omega_{k(\ms X^{\circ})}$ on $\PP^1$. Thus:
	\begin{align*}
		H^0(V, \Omega_{\ms X}) = \Ind^G_{G_0} H^0(V^{\circ}, \Omega_{\ms X^{\circ}}) = \Ind^G_{G_0} \bigoplus_{g \in G_0} g^*(z) H^0(U, \Omega_{\PP^1}) \cong k[G] H^0(U, \Omega_{\PP^1}).
	\end{align*}
\end{proof}
\begin{proof}[Proof of Proposition~\ref{prop:key_fact_omega_L}]
	We prove this by induction on $\# G$. Denote:
	\begin{align*}
		S_G(z) := \bigoplus_{g \in G} g^*(z) \Omega_{\ms A}^{\log}.
	\end{align*}
	For $\# G = 1$ this is true, as then $z \in k^{\times}$ and Proposition~\ref{prop:key_fact_omega_L} comes down to the equality:
	\[
	k((t)) \, dt = \frac 1t k[[t]] \, dt \oplus k[1/t] \frac{dt}{t^2}.
	\]
	Suppose now that $\# G > 1$. Let $H = \langle h \rangle \le G$ be a central subgroup of order~$p$ (it exists by \cite[Chapter~6, Theorem~1~(1)]{DummitFoote2004}).
	Denote $\ms X' := \ms X/H$ and $G' = G/H = \{ g_1 H, \ldots, g_s H \}$. Then $\ms X' \to \PP^1$ is the HKG-cover
	associated to $(\ms B^H, G')$.
	Suppose to the contrary that $\omega \in S_G(z) \cap H^0(V, \Omega_{\ms X})$, $\omega \neq 0$. Let $i \ge 0$ be the largest number for which
	$(h - 1)^i \cdot \omega \neq 0$  (note that $(h-1)^p = 0$, thus there exists such a number). 
	Denote $\omega' := (h - 1)^i \cdot \omega$. Then $(h-1) \cdot \omega' = 0$, i.e $\omega' \in \Omega_{k(\ms X')}$.
	On the other hand, $\omega' \in H^0(V, \Omega_{\ms X})$ clearly implies $\omega' \in H^0(V', \Omega_{\ms X'})$, where $V'$ is the image of $V$ through $\ms X \to \ms X'$. Let $z_H := \tr_{\ms L/\ms L^H}(z)$. Note that
	$z_H$ is a magical element for the extension $\ms L^H/\ms K$ by Lemma~\ref{lem:zH_is_magical}.
	We show now that $\omega' \in S_{G'}(z_H)$. Let $\omega' = \sum_{g \in G} g^*(z) \omega'_g$, where $\omega'_g \in \Omega_{\ms A}^{\log}$ for every $g \in G$. The condition $h(\omega') = \omega'$ implies that $\omega_{g h}' = \omega_g'$ for any $g \in G$. 
	Thus we can define $\omega_j' := \omega_g'$ for any $g \in g_j H$. We have:
	\[
		\omega' = \sum_{g \in G} g^*(z) \omega_g' = \sum_{j = 1}^s \omega_j' \sum_{g \in g_j H} g^*(z)
		= \sum_{j = 1}^s g_j^*(z_H) \omega_j' \in S_{G'}(z_H).
	\]
	This shows that $\omega' \in S_{G'}(z_H) \cap H^0(V', \Omega_{\ms X'})$. But by induction hypothesis,
	$S_{G'}(z_H) \cap H^0(V', \Omega_{\ms X'}) = \{ 0 \}$. Thus $S_{G}(z) \cap H^0(V, \Omega_{\ms X}) = \{ 0 \}$.\\
	Denote:
	\[
		\mc Q := \frac{\Omega_{\ms L}}{S_G(z) \oplus H^0(V, \Omega_{\ms X})}.
	\]
	We'll show now that $\mc Q = \{ 0 \}$. Suppose to the contrary that $\omega \in \Omega_{\ms L}$ has non-zero image in $\mc Q$.
	Let $i \ge 0$ be the largest number such that $(h - 1)^i \cdot \omega \neq 0$ in $\mc Q$ and denote $\omega' := (h - 1)^i \cdot \omega$.
	Then $\omega' \in \mc Q^H$. Using the long exact sequence of group cohomology for
	\[
	0 \to S_G(z) \oplus H^0(V, \Omega_{\ms X}) \to \Omega_{\ms L} \to \mc Q \to 0
	\]
	we obtain:
	\begin{align}
		0 &\to S_G(z)^H \oplus H^0(V, \Omega_{\ms X})^H \to \Omega_{\ms L}^H \to \mc Q^H \nonumber \\
		&\to H^1(H, S_G(z) \oplus H^0(V, \Omega_{\ms X})). \label{eqn:long_exact_invariants}
	\end{align}
	Note that $H^1(H, S_G(z)) = 0$, since $S_G(z)$ is a free $k[G]$-module and hence also a free $k[H]$-module.
	On the other hand, using Lemma~\ref{lem:H0VX_is_free_kG_module} we see that $H^1(H, H^0(V, \Omega_{\ms X})) = 0$.
	Thus~\eqref{eqn:long_exact_invariants} implies that
	$\Omega_{\ms L^H} = \Omega_{\ms L}^H \to \mc Q^H$ is a surjection. Therefore, without loss of generality we may assume that
	$\omega' \in \Omega_{\ms L^H}$. By induction hypothesis, $\omega' \in S_{G'}(z_H) \oplus H^0(V', \Omega_{\ms X'})$.
	But this implies that $\omega' = 0$ in $\mc Q$! Indeed, $H^0(V', \Omega_{\ms X'}) \subset H^0(V, \Omega_{\ms X})$
	and if $\eta = \sum_{j = 1}^s g_j^*(z_H) \eta_j \in S_{G'}(z_H)$ for $\eta_j \in \Omega_{\ms A}^{\log}$, then 
	\[
	\eta = \sum_{j=1}^s \eta_j \sum_{l = 0}^{p-1} (g_j h^l)^*(z) \in S_G(z),
	\]
	i.e. $S_{G'}(z_H) \subset S_G(z)$
	Contradiction ends the proof.
\end{proof}
\begin{proof}[Proof of Theorem~\ref{thm:cohomology_of_HKG_etale_algebras}]
	Denote the right hand sides in Theorem~\ref{thm:cohomology_of_HKG_etale_algebras} by $H^0(z)$, $H^1(z)$, $H^1_{dR}(z)$
	respectively.
	Let $S_G(z)$ be defined as in the proof of Proposition~\ref{prop:key_fact_omega_L} and denote:
	\begin{align*}
		S_G^{\vee}(z) := \bigoplus_{g \in G} g^*(z^{\vee}) \mf m_{\ms A}.
	\end{align*}
	The quotients in Theorem~\ref{thm:cohomology_of_HKG_etale_algebras} are well-defined. Indeed, \cite[Lemma~3.2]{Garnek_p_gp_covers} implies that
	$S_G(z) \subset \Omega_{\ms B}^{\log}$ and \cite[Lemma~3.4]{Garnek_p_gp_covers} yields $\mf m_{\ms B} \subset S_G^{\vee}(z)$.\\
	Proposition~\ref{prop:key_fact_omega_L} implies that
	\[
		\Omega_{\ms B}^{\log} = S_G(z) \oplus H^0(\ms X, \Omega_{\ms X}(R)),
	\]
	since $S_G(z) \subset \Omega_{\ms B}^{\log}$ and $H^0(V, \Omega_{\ms X}) \cap \Omega_{\ms B}^{\log} = H^0(\ms X, \Omega_{\ms X}(R))$. Moreover, the residue theorem (cf. \cite[Corollary after Theorem 3]{Tate_residues_differentials_curves}) implies that $H^0(\ms X, \Omega_{\ms X}(R)) = H^0(\ms X, \Omega_{\ms X})$.
	Thus we obtain the desired isomorphism:
	\[
		H^0(\ms X, \Omega_{\ms X}) \cong \frac{\Omega_{\ms B}^{\log}}{S_G(z)}.
	\]
	In the sequel we will use~\eqref{eqn:formula_for_H1} and~\eqref{eqn:formula_for_H1_dR} for the sheaf $\pi_* \mc O_{\ms X}$
	and the complex $\pi_* \Omega_{\ms X}^{\bullet}$ on $\PP^1$ with $S = \{ \infty \}$. Note that since $\pi$ is affine, $H^1(\ms X, \mc O_{\ms X}) \cong H^1(\PP^1, \pi_* \mc O_{\ms X})$
	and $H^1_{dR}(\ms X) \cong \HH^1(\PP^1, \pi_* \Omega_{\ms X}^{\bullet})$. Moreover, if we denote by $\eta$ the generic point of $\PP^1$,
	then $(\pi_* \mc O_{\ms X})_{\eta} \cong k(\ms X)$.
	The pairing
	\[
		k(\ms X) \times \Omega_{k(\ms X)} \to k, \quad (f, \omega) := \res_{\infty}(\tr_{X/\PP^1}(f \cdot \omega))
	\]
	induces both the Serre's duality $H^0(\ms X, \Omega_{\ms X})^{\vee} \cong H^1(\ms X, \mc O_{\ms X})$ and the duality between $\Omega_{\ms B}^{\log}/S_G(z)$ and
	$S_G^{\vee}(z)/\mf m_{\ms B}$ (cf. \cite[Lemma~3.5~(1)]{Garnek_p_gp_covers}). This proves that
	\[
		H^1(\ms X, \mc O_{\ms X}) \cong S_G^{\vee}(z)/\mf m_{\ms B}.
	\]
	Recall that $H^0(\ms X, \Omega_{\ms X}) = H^0(\ms X, \Omega_{\ms X}(R))$ and $H^1(\ms X, \mc O_{\ms X}) \cong H^1(\ms X, \mc O_{\ms X}(-R))$. Hence,
	by the exact sequence \cite[(6.2)]{Garnek_p_gp_covers}, the de~Rham cohomology of $\ms X$ might be computed
	as the hypercohomology of the complex $(\mc O_{\ms X}(-R) \stackrel{d}{\longrightarrow} \Omega_{\ms X}(R))$.
	We obtain:
	\begin{align*}
		H^1_{dR}(\ms X) &\cong \frac{\{ (\omega, \nu) \in H^0(V, \Omega_{\ms X}) \times k(\ms X) : \omega - d\nu \in \Omega_{\ms X}(R) \}}{\{ (df, f + \nu) : f \in \mc O_{\ms X}(V),
			\nu \in \mc O_{\ms X}(-R)_{\infty} \}}\\[0.5em]
		&\cong \frac{\{ (\omega, \nu) \in H^0(V, \Omega_{\ms X}) \times \frac{k(\ms X)}{\mc O_{\ms X}(-R)_{\infty}} : \omega - d\nu \in \Omega_{\ms X}(R) \}}{\{ (df, f) : f \in \mc O_{\ms X}(V)\}}\\[0.5em]
		&\cong \frac{\{ (\omega, \nu) \in H^0(V, \Omega_{\ms X}) \times \frac{\ms L}{\mf m_{\ms B}} : \omega - d\nu \in \Omega_{\ms B}^{\log} \}}{\{ (df, f) : f \in \mc O_{\ms X}(V) \}}
	\end{align*}
	(we used the isomorphism $k(\ms X)/\mc O_{\ms X}(-R)_{\infty} \cong \ms L/\mf m_B$ for the last equality).
	By Proposition~\ref{prop:key_fact_omega_L} $H^0(V, \Omega_{\ms X})$ may be identified with $\frac{\Omega_{\ms L}}{S_G(z)}$. Thus
	we may identify:
	\begin{align*}
		H^1_{dR}(z) \cong \left\{ (\omega, \nu) \in H^0(V, \Omega_{\ms X}) \times \frac{S_G^{\vee}(z)}{\mf m_{\ms B}}:
		\quad \omega - d\nu \in \Omega_{\ms B}^{\log} \right\}.
	\end{align*}
	The inclusion:
	\[
		H^0(V, \Omega_{\ms X}) \times S_G^{\vee}(z) \hookrightarrow H^0(V, \Omega_{\ms X}) \times \ms L
	\]
	induces a homomorphism $H^1_{dR}(z) \to H^1_{dR}(\ms X)$. Thus we obtain a commutative diagram:
	\begin{center}
		\begin{tikzcd}
			0 \arrow[r] & H^0(z) \arrow[r] \arrow[d] & {H^1_{dR}(z)} \arrow[r] \arrow[d] & H^1(z) \arrow[r] \arrow[d] & 0 \\
			0 \arrow[r] & {H^0(\ms X, \Omega_{\ms X})} \arrow[r]     & H^1_{dR}(\ms X) \arrow[r]                  & {H^1(\ms X, \mc O_{\ms X})} \arrow[r]      & 0,
		\end{tikzcd}
	\end{center}
	in which rows are exact and the left and right arrows are isomorphisms. Thus the middle arrow is also an isomorphism. This ends the proof.
\end{proof}

\section{$p$-group covers} \label{sec:p_gp}
In this section we assume that $k$, $G$ and $\pi : X \to Y$ are as in Theorem~\ref{thm:cohomology_of_G_covers}.
Recall that $B \subset Y(k)$ denotes the set of branch points of $\pi$ and let $R := \pi^{-1}(B)$
be the ramification locus. Also, by abuse of notation, for $Q \in Y(k)$ we write $G_{Q, i} := G_{P, i}$, $e_Q := e_{X/Y, P}$, $d_Q := d_{X/Y, P}$ etc. for any $P \in \pi^{-1}(Q)$. Note that these quantities don't depend on the choice
of $P$ by the condition~\ref{enum:A}. \\

\noindent Denote for any $Q \in B$:
\begin{align*}
	H^0_Q &:= \frac{\Omega_X(B)_Q}{\bigoplus_{g \in G} g^*(z) \Omega_Y(B)_Q},\\
	H^1_Q &:= \frac{\bigoplus_{g \in G} g^*(z^{\vee}) \mc O_Y(-B)_Q}{\mc O_X(-R)_Q},\\
	H^1_{dR, Q} &:= \left\{ (\omega, \nu) \in \frac{\Omega_{k(X)}}{\bigoplus_{g \in G} g^*(z) \Omega_Y(B)_Q} \times \frac{\bigoplus_{g \in G} g^*(z^{\vee}) \mc O_Y(-B)_Q}{\mc O_X(-R)_Q}:
	\quad \omega - d\nu \in \Omega_X(B)_Q \right\}.
\end{align*}
\begin{proof}[Proof of Theorem~\ref{thm:cohomology_of_G_covers}]
	Recall that by \cite[Theorem~1.1]{Garnek_p_gp_covers} we have the following isomorphisms of $k[G]$-modules:
	\begin{align*}
		H^0(X, \Omega_X) &\cong k[G]^{\oplus g_Y} \oplus I_{X/Y} \oplus \bigoplus_{Q \in B} H^0_Q,\\
		H^1(X, \mc O_X) &\cong k[G]^{\oplus g_Y} \oplus I_{X/Y}^{\vee} \oplus \bigoplus_{Q \in B} H^1_Q,\\
		H^1_{dR}(X) &\cong k[G]^{\oplus 2 g_Y} \oplus I_{X/Y} \oplus I_{X/Y}^{\vee} \oplus \bigoplus_{Q \in B} H^1_{dR, Q}.
	\end{align*}
	Fix $Q \in B$ and let $x \in k(Y)$ be a uniformizer at $Q$. We prove now that $H^0_Q \cong H^0(\ms X_Q, \Omega_{\ms X_Q})$.
	Let $\ms A := \wh{\mc O}_{Y, Q}$ and $\ms B := \wh{\mc O}_{X, Q}$. From now on, we use the notation of Section~\ref{sec:hkg}. In particular,
	denote
	by $\ms K$ and $\ms L$ the
	total fraction fields of $\ms A$ and~$\ms B$. Note that $\ms K = k((x))$ and $\ms L$ is a $G$-Galois \'{e}tale algebra over $\ms K$. The inclusion $\Omega_{k(X)} \hookrightarrow \Omega_{\ms L}$ induces a natural morphism:
	\begin{equation} \label{eqn:map_H0Q_H0Q_completed}
		H^0_Q := \frac{\Omega_{X}(R)_Q}{\bigoplus_{g \in G} g^*(z) \Omega_Y(B)_Q} \to 
		\frac{\Omega_{\ms B}^{\log}}{\bigoplus_{g \in G} g^*(z) \Omega_{\ms A}^{\log}} \cong H^0(\ms X_Q, \Omega_{\ms X_Q})
	\end{equation}
	(we used Theorem~\ref{thm:cohomology_of_HKG_etale_algebras} for the last isomorphism). The map~\eqref{eqn:map_H0Q_H0Q_completed} is injective, since
	if
	\[
		\omega = \sum_{g \in G} g^*(z) \omega_g \in \Omega_{X}(R)_Q \cap \left(\bigoplus_{g \in G} g^*(z) \Omega_{\ms A}^{\log} \right)
	\]
	then $\omega_g = \tr_{X/Y}(g^*(z^{\vee}) \cdot \omega) \in \Omega_{k(Y)} \cap \Omega_{\ms A}^{\log} = \Omega_Y(B)_Q$
	for every $g \in G$ and thus $\omega \in \bigoplus_{g \in G} g^*(z) \Omega_Y(B)_Q$.
	We prove now that the map~\eqref{eqn:map_H0Q_H0Q_completed} is surjective.
	Note that for any $\omega \in \Omega_{\ms L}$ there exists $\omega' \in \Omega_{k(X)}$ with $\omega - \omega' \in \bigoplus_{g \in G} g^*(z) \Omega_{\ms A}^{\log}$. Indeed, if $\omega = \sum_{g \in G} g^*(z) \omega_g$ then
	it suffices to take $\omega' := \sum_{g \in G} g^*(z) \omega_g^{< 0}$, where
	for $\eta = \sum_{i \in \ZZ} a_i x^i \, dx \in \Omega_{\ms K} = k((x)) \, dx$,
	we put $\eta^{< 0} := \sum_{i < 0} a_i x^i \, dx$. This proves that~\eqref{eqn:map_H0Q_H0Q_completed} is an isomorphism.\\
	Similarly, the inclusion $k(X) \hookrightarrow \ms L$ induces an isomorphism:
	\begin{equation*}
		H^1_Q := \frac{\bigoplus_{g \in G} g^*(z^{\vee}) \mc O_Y(-B)_Q}{\mc O_X(-R)_Q} \to
		\frac{\bigoplus_{g \in G} g^*(z^{\vee}) \mf m_{\ms B}}{\mf m_{\ms A}} \cong H^1(\ms X_Q, \mc O_{\ms X_Q}).
	\end{equation*}
	For the de~Rham cohomology, by Theorem~\ref{thm:cohomology_of_HKG_etale_algebras} the inclusion $\Omega_{k(X)} \times k(X) \hookrightarrow \Omega_{\ms L} \times \ms L$
	induces a map $H^1_{dR, Q} \to H^1_{dR}(\ms X_Q)$, which fits in the commutative diagram with exact rows:
	\begin{center}
		\begin{tikzcd}
			0 \arrow[r] & H^0_{Q} \arrow[r] \arrow[d] & {H^1_{dR, Q}} \arrow[r] \arrow[d] & H^1_{Q} \arrow[r] \arrow[d] & 0 \\
			0 \arrow[r] & {H^0(\ms X_Q, \Omega_{\ms X_Q})} \arrow[r]     & H^1_{dR}(\ms X_Q) \arrow[r]                  & {H^1(\ms X_Q, \mc O_{\ms X_Q})} \arrow[r]      & 0.
		\end{tikzcd}
	\end{center}
	Since the outer two arrows are isomorphisms, we deduce that the map $H^1_{dR, Q} \to H^1_{dR}(\ms X_Q)$ is also an isomorphism.\\
	
	\noindent Finally, note that by~\eqref{eqn:HKG_is_Ind_HKG} $\ms X_Q = \Ind^G_{G_Q} \ms X_Q^{\circ}$, which easily leads to the isomorphisms:
	\begin{align*}
		H^0(\ms X_Q, \Omega_{\ms X_Q}) &\cong \Ind^G_{G_Q} H^0(\ms X_Q^{\circ}, \Omega_{\ms X_Q^{\circ}})\\
		H^1(\ms X_Q, \mc O_{\ms X_Q}) &\cong \Ind^G_{G_Q} H^1(\ms X_Q^{\circ}, \mc O_{\ms X_Q^{\circ}})\\
		H^1_{dR}(\ms X_Q) &\cong \Ind^G_{G_Q} H^1_{dR}(\ms X_Q^{\circ}).
	\end{align*}
	This ends the proof.
\end{proof}
For later use, we prove a lemma that simplifies computation of $I_{X/Y}$.
In the sequel we identify the relative augmentation ideal $I_{G, H}$ (as defined in Section~1) with:
\[
I_{G, H} = \left\{ \sum_{g \in G} a_g g \in k[G] : \sum_{g \in g_0 H} a_g = 0 \quad \forall {g_0 \in G} \right\}.
\]
For any $Q \in B$ and $g \in G$, let $g_Q \in \bigoplus_{B} k[G]$ be the element with $g$ on the $Q$-th component and $0$ on other components.
\begin{Lemma} \label{lem:IXY_and_IXY_prim}
	Suppose that $S \subset B$ is such that for every $Q \in B$
	there exists $Q' \in S$ such that $G_Q \subset G_{Q'}$. Then:
	\begin{align*}
		I_{X/Y} \cong \bigoplus_{Q \in B \setminus S} I_{G, G_Q}
		\oplus I(S),
	\end{align*}
	where $I(S) := \ker(\Sigma : \bigoplus_{Q \in S} I_{G, G_{Q}} \to I_G)$.
\end{Lemma}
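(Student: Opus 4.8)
The plan is to prove the statement by induction on $\#(B \setminus S)$, removing one branch point of $B \setminus S$ at a time by a suitable $k[G]$-linear change of coordinates on $\bigoplus_{Q \in B} I_{G, G_Q}$.

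The one preliminary fact I would isolate first is that, under the identification of $I_{G, H}$ with the subspace $\{\sum_{g} a_g g \in k[G] : \sum_{g \in g_0 H} a_g = 0 \text{ for all } g_0 \in G\}$ recalled above, one has $I_{G, H} \subseteq I_{G, H'}$ whenever $H \subseteq H'$: a coset $g_0 H'$ is a disjoint union of cosets of $H$, so the vanishing conditions defining $I_{G,H}$ imply those defining $I_{G, H'}$. I would also note that $I_{G, G_Q} = 0$ whenever $Q \notin B$, so that $I_{X/Y} = \ker(\Sigma : \bigoplus_{Q \in B} I_{G, G_Q} \to I_G)$, where $\Sigma$ is simply the summation map inside $k[G]$; the base case $B = S$ of the induction is then immediate from the definition of $I(S)$.

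For the inductive step, fix $Q_0 \in B \setminus S$ and choose, by hypothesis, a point $Q_0' \in S$ with $G_{Q_0} \subseteq G_{Q_0'}$; note that $Q_0' \neq Q_0$ since $Q_0 \notin S$. Since $I_{G, G_{Q_0}} \subseteq I_{G, G_{Q_0'}}$ by the observation above, the $k[G]$-linear endomorphism $T$ of $\bigoplus_{Q \in B} I_{G, G_Q}$ sending $(m_Q)_Q$ to the tuple whose $Q$-component is $m_Q$ for $Q \neq Q_0'$ and whose $Q_0'$-component is $m_{Q_0'} - m_{Q_0}$ is well defined, and it is an automorphism, its inverse adding $m_{Q_0}$ back in the $Q_0'$-component. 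A direct computation gives $\Sigma(T((m_Q)_Q)) = \sum_{Q \in B \setminus \{Q_0\}} m_Q$, so $\Sigma \circ T = \Sigma' \circ p$, where $p : \bigoplus_{Q \in B} I_{G, G_Q} \twoheadrightarrow \bigoplus_{Q \in B \setminus \{Q_0\}} I_{G, G_Q}$ is the coordinate projection, with kernel the summand $I_{G, G_{Q_0}}$, and $\Sigma'$ is the summation map attached to $B \setminus \{Q_0\}$. Hence
\[
 I_{X/Y} = \ker \Sigma \;\cong\; \ker(\Sigma \circ T) \;=\; p^{-1}(\ker \Sigma') \;\cong\; I_{G, G_{Q_0}} \oplus \ker \Sigma'.
\]

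Finally I would apply the induction hypothesis to $B \setminus \{Q_0\}$ together with the same $S$: this pair again satisfies the hypothesis of the lemma, because $S \subseteq B \setminus \{Q_0\}$ and every $G_Q$ with $Q \in B \setminus \{Q_0\}$ is still contained in some $G_{Q'}$ with $Q' \in S$, while $\#((B \setminus \{Q_0\}) \setminus S) = \#(B \setminus S) - 1$. Thus $\ker \Sigma' \cong \bigoplus_{Q \in (B \setminus S) \setminus \{Q_0\}} I_{G, G_Q} \oplus I(S)$, and substituting into the displayed isomorphism yields $I_{X/Y} \cong \bigoplus_{Q \in B \setminus S} I_{G, G_Q} \oplus I(S)$, as claimed. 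I do not anticipate a serious obstacle; the only points requiring genuine (if routine) verification are that $T$ is a well-defined $k[G]$-module automorphism — precisely where the hypothesis $G_{Q_0} \subseteq G_{Q_0'}$ enters, through $I_{G, G_{Q_0}} \subseteq I_{G, G_{Q_0'}}$ — and the identity $\Sigma \circ T = \Sigma' \circ p$.
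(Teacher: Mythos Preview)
Your proof is correct and follows essentially the same approach as the paper: both argue by induction on $\#(B\setminus S)$ and use the same ``shear'' map that subtracts the $Q_0$-component from the component of a point whose stabilizer contains $G_{Q_0}$, relying on the inclusion $I_{G,G_{Q_0}}\subseteq I_{G,G_{Q_0'}}$. The only cosmetic differences are that the paper peels points off the $S$-side (proving $I(S)\cong I_{G,G_{Q_0}}\oplus I(S\setminus\{Q_0\})$) and writes down explicit mutually inverse maps $\Phi,\Psi$, whereas you peel points off the $B$-side and package the shear as an automorphism $T$ with $\Sigma\circ T=\Sigma'\circ p$; these are equivalent.
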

\begin{proof}
	We prove it by induction on $|B \setminus S|$. For $B = S$ this is straightforward.
	To prove the induction step, it suffices to show that if $S = S' \cup \{ Q_0 \}$
	and $G_{Q_0} \subset G_{Q_1}$ for some $Q_1 \in S'$,
	then:
	\[
		I(S) \cong I_{G, G_{Q_0}} \oplus I(S').
	\]
	We treat both sides as submodules of $\bigoplus_B k[G]$.
	The isomorphisms are given by:
	\begin{align*}
		\Phi : I(S) &\to I_{G, G_{Q_0}} \oplus I(S'),\\
		\sum_{Q, g} a_{Qg} \cdot g_Q &\mapsto \sum_{g} a_{Q_0 g} \cdot g_{Q_0}
		+ \sum_{g} a_{Q_0 g} \cdot g_{Q_1}
		+ \sum_{Q \in S'} \sum_{g} a_{Qg} \cdot g_Q,\\
		\Psi : I_{G, G_{Q_0}} \oplus I(S') &\to I(S),\\
		\sum_{Q, g} a_{gQ} \cdot g_Q &\mapsto \sum_{g} a_{Q_0 g} \cdot g_{Q_0}
		- \sum_{g} a_{Q_0 g} \cdot g_{Q_1}
		+ \sum_{Q \in S'} \sum_{g} a_{Qg} \cdot g_Q
	\end{align*}
	(we abbreviate $\sum_{Q \in S}$ to $\sum_Q$ and $\sum_{g \in G}$ to $\sum_g$).
	Note that $\Phi$ is well-defined. Indeed, for any $\sum_{Q, g} a_{Qg} \cdot g_Q \in I(S)$:
	\begin{itemize}
		\item $\sum_{g} a_{Q_0 g} \cdot g \in I_{G, G_{Q_0}} \subset I_{G, G_{Q_1}}$,
		
		\item $\sum_{g} a_{Q_0 g}\cdot g_{Q_1}
		+ \sum_{Q \in S'} a_{Qg}\cdot g_Q \in \ker(\bigoplus_{S'} k[G] \to k[G])$, since
		\[
			\sum_{g} a_{Q_0 g}\cdot g + \sum_g \sum_{Q \in S'} a_{Qg}\cdot g = \sum_{g} \left(\sum_{Q \in S} a_{Q g} \right) \cdot g.
		\]
		and $\sum_{Q, g} a_{Qg} g_Q \in \ker(\bigoplus_{S} k[G] \to k[G])$.
	\end{itemize}
	Analogously one checks that $\Psi$ is well-defined. It is easy to check that $\Phi$ and
	$\Psi$ are equivariant and mutually inverse.
\end{proof}

\section{Klein four covers} \label{sec:klein_covers}
In this section we recall basic facts concerning $\VV_4$-covers in characteristic~$2$. Keep
previous notation with $p = 2$ and $G = \VV_4 = \{ e, \sigma, \tau, \sigma \tau \}$. Write $H_0 = \langle \tau \rangle$,
$H_1 = \langle \sigma \rangle$, $H_{\infty} = \langle \sigma \tau \rangle$. Let also for any subgroup $H \le \VV_4$:
\begin{align*}
	B(H) &:= \{ Q \in B : G_Q = H \},\\
	B'(H) &:= \{ Q \in B : G_{Q, i} = H, \quad G_{Q, i+1} = \{ e \}  \, \textrm{ for some } i \}.
\end{align*}
We discuss now how to find $B(\VV_4)$, $B(H_i)$, $B'(H_i)$ and $d_Q$, given the equations defining~$X$.
Recall that the function field of $X$ is given by equations of the form:
\begin{equation} \label{eqn:klein_eqn}
	y_0^2 + y_0 = h_0, \quad y_1^2 + y_1 = h_1,%
\end{equation}
where $h_0, h_1 \in k(Y)$, $\sigma(y_0) = y_0 + 1$, $\sigma(y_1) = y_1$ and
$\tau(y_0) = y_0$, $\tau(y_1) = y_1 + 1$. Denote $y_{\infty} := y_0 + y_1$, $h_{\infty} := h_0 + h_1$.\\

We say that the equations~\eqref{eqn:klein_eqn} are in \emph{standard form} at a point
$Q \in Y(k)$, if the function $h_i$ is either regular at $Q$ or has a pole of odd order
for every $i = 0, 1, \infty$. Note that any equation
can be brought to the standard form at a given point by successively subtracting powers
of a uniformizer. Moreover, if $Y = \PP^1$, one can find an equation of the form~\eqref{eqn:klein_eqn},
which is in standard form at any $Q \in Y(k)$.\\

The article \cite{Bleher_Camacho_Holomorphic_differentials} defined
two local invariants $m_Q$ and $M_Q$ of the cover $\pi$ associated to every $Q \in Y(k)$ as follows.
The number $m_Q$ is the minimum of the lower ramification jumps of the covers $X/H_i \to Y$ for $i \in \{ 0, 1, \infty \}$ at $Q$. Similarly, $M_Q$ is the maximum 
of those jumps. Note that for any $i \in \{ 0, 1, \infty\}$ the cover $X/H_i \to Y$ is given by the equation $y_i^2 + y_i = h_i$.
Its lower ramification jump at $Q$ equals $\pole_Q(h_i)$ (cf. \cite[Lemma~4.2]{Garnek_equivariant}), where $\pole_Q(h) := \max \{ 0, -\ord_Q(h) \}$ for any $h \in k(Y)$.
Therefore:
\begin{align*}
	m_Q &:= \min \{ \pole_Q(h_0), \pole_Q(h_1), \pole_Q(h_{\infty}) \}, \\
	M_Q &:= \max \{ \pole_Q(h_0), \pole_Q(h_1), \pole_Q(h_{\infty}) \}.
\end{align*}
In order to simplify the formulation of Theorem~\ref{thm:de_rham_of_klein}, we modify these definitions slightly.
Namely, we replace $m_Q$ by $\max\{ 1, m_Q \}$ and $M_Q$ by $\max\{ 1, M_Q \}$. In other words, if $G_Q \neq \VV_4$, then we put
$m_Q = 1$ (instead of $0$) and if $G_Q = \{ e \}$, then also $M_Q = 1$.\\

The discussion above easily allows to determine the sets $B(H)$, $B'(H)$ for any $H \le G$:
\begin{itemize}
	\item We have $Q \in B(\VV_4)$ if and only if $h_0$, $h_1$, $h_{\infty}$ have poles at $Q$.
	In this case, $Q \in B'(H_a)$ if and only if $m_Q \neq M_Q$ and $m_Q = - \ord_Q(h_a)$. Moreover,
	$Q \in B'(\VV_4)$ if and only if $m_Q = M_Q$, i.e. $\ord_Q(h_0) = \ord_Q(h_1) = \ord_Q(h_{\infty}) < 0$.
	
	\item We have $Q \in B(H_a)$ if and only if $h_a \in \mc O_{Y, Q}$  and $h_b$, $h_c$ have a pole at~$Q$
	for $\{ a, b, c \} = \{ 0, 1, \infty \}$.
	
	\item We have $Q \not \in B$ if and only if $h_0$, $h_1$, $h_{\infty} \in \mc O_{Y, Q}$.
\end{itemize}
By \cite[Section~3]{Wu_Scheidler_Ramification_groups_AS_extensions} for any $Q \in B(\VV_4)$:
\begin{align*}
	\VV_4 = G_{Q, 0} = \ldots = G_{Q, m_Q} > G_{Q, m_Q + 1} = \ldots = G_{Q, m_Q + 2(M_Q - m_Q)}
	> G_{Q, m_Q + 2(M_Q - m_Q) + 1} = \{ e \}
\end{align*}
and for $Q \in B(H_a)$:
\begin{align*}
	\VV_4 > H_a = G_{Q, 0} = \ldots = G_{Q, M_Q} > G_{Q, M_Q + 1} = \{ e \}.
\end{align*}
In particular:
\begin{align*}
	d_Q = 
	\begin{cases}
		m_Q + 2M_Q + 3, & G_Q = \VV_4,\\
		M_Q + 1, & \# G_Q = 2,\\
		0, & Q \not \in B.
	\end{cases}
\end{align*}
Moreover, if $Q \in B(\VV_4) \cap B'(H_a)$, $\pi^{-1}(Q) = \{ P \}$ then $\ord_P(h_a) = -4m_Q$, $\ord_P(y_a) = -2m_Q$ and $\ord_P(h_b) = -4 M_Q$, $\ord_P(y_b) = -2M_Q$ for $b \neq a$.
\begin{Proposition} \label{prop:klein_of_P1_has_magical_elt}
	Keep the above notation and suppose that $Y = \PP^1$. Then there exists $z \in k(X)$ satisfying~\ref{enum:B}.
\end{Proposition}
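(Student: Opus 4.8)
The plan is to construct the magical element $z$ locally at each branch point and then patch the local constructions together using the fact that $Y = \PP^1$, so that global functions are abundant. Recall that condition~\ref{enum:B} requires $z \in k(X)$ with $\ord_P(z) \ge -d_P'$ for every $P \in X(k)$ and $\tr_{X/Y}(z) \ne 0$. By the structure theory recalled above, for each $Q \in B$ the completed extension $\wh{\mc O}_{X,Q}/\wh{\mc O}_{Y,Q}$ is an étale $\VV_4$-Galois algebra satisfying $(*)$, so by Corollary~\ref{cor:existence_of_magical} it admits a magical element $z_Q \in \ms L_Q$ in the sense of Section~\ref{sec:hkg}; explicitly one may take $z_Q$ supported on a single component with $\ord = -d_Q'$ as in the proof of that corollary. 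The point is that $\ord_{\mf m_i}(z_Q) \ge -d'(\mf m_i/\mf m_{\ms A})$ translates, via~\eqref{eqn:valuation_of_diff_form} and the identification of $d'$ with the ramification data, into the bound $\ord_P(z_Q) \ge -d_P'$ for $P$ above $Q$, and $\tr_{\ms L_Q/\ms K_Q}(z_Q) \in \wh{\mc O}_{Y,Q}^\times$, in particular $\tr_{X/Y}(z_Q) \not\equiv 0 \bmod \mf m_{Y,Q}$.

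Next I would globalize. Since $Y = \PP^1$, using the explicit equations~\eqref{eqn:klein_eqn} and the ramification description preceding the Proposition, one can write down a candidate $z \in k(X)$ as a polynomial expression in $y_0, y_1$ with coefficients in $k(\PP^1)$ — for instance a suitable $k(\PP^1)$-linear combination of $1, y_0, y_1, y_0 y_1$ — whose pole orders at each $P \in \pi^{-1}(B)$ are controlled by $-d_P'$. Concretely, for $Q \in B(\VV_4) \cap B'(H_a)$ the displayed formulas $\ord_P(y_a) = -2m_Q$, $\ord_P(y_b) = -2M_Q$ let one compute the order of $y_0 y_1$ and compare it with $d_P' = d_P - 1 = m_Q + 2M_Q + 2$; one chooses the local leading coefficient (a power of a uniformizer of $\PP^1$ at $Q$) so that the product $y_0 y_1$, suitably scaled, has order exactly $-d_P'$ at $P$, and one checks this choice is compatible with Lemma~\ref{lem:criterion_magical} (equality at precisely one place) — or, more robustly, arrange $z$ so that locally at each $Q$ it agrees with $z_Q$ up to something in $\bigoplus_g g^*(z_Q)\Omega$-type error that doesn't affect the trace mod $\mf m_{Y,Q}$. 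At the finitely many non-branch points the only constraint is regularity where $z$ could otherwise have poles, which is again arranged by multiplying by an appropriate polynomial in the coordinate of $\PP^1$ vanishing to high enough order at those points — this does not destroy the trace non-vanishing since that is only required to be nonzero as a global function, i.e. nonzero at one point suffices.

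The main obstacle I expect is the bookkeeping of ramification filtrations for $\VV_4$-covers: the covers $X/H_a \to Y$ have jumps $\pole_Q(h_a)$, and the combined filtration of $\VV_4$ at $Q$ (with its two distinct jumps $m_Q$ and $M_Q + (M_Q - m_Q)$ in upper-type indexing, translated to lower numbering as displayed) must be used carefully to get the precise value of $d_P'$ and hence the precise pole bound on the candidate $z$. One has to verify the inequality $\ord_P(z) \ge -d_P'$ simultaneously at every $P$ above every branch point while keeping $\tr_{X/Y}(z)$ nonzero; the cleanest route is to fix one branch point $Q_0$, take $z$ to be (a global lift of) a local magical element $z_{Q_0}$ multiplied by a high power of a linear form on $\PP^1$ vanishing at all other branch and relevant points, and then invoke~\eqref{eqn:valuation_of_trace} to see that multiplying by such a power only increases trace valuations away from $Q_0$ while $\tr_{X/Y}(z)$ retains a nonzero value near $Q_0$ because there the extra factor is a unit. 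This reduces the whole statement to the single-point existence already provided by Corollary~\ref{cor:existence_of_magical} together with the freedom, special to $\PP^1$, to adjust pole orders at all other points by a global rational function.
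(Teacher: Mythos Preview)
Your overall architecture---produce local magical elements $z_Q$ at each branch point via Corollary~\ref{cor:existence_of_magical} and then patch them together using the abundance of global functions on $\PP^1$---is exactly the paper's approach. The paper, however, executes the globalization more cleanly and avoids the detours in your second and third paragraphs. The key observations you are missing are: (i) $\{1, y_0, y_1, y_0 y_1\}$ is a $k(Y)$-basis of $k(X)$ with $\tr_{X/Y}(1)=\tr_{X/Y}(y_0)=\tr_{X/Y}(y_1)=0$ and $\tr_{X/Y}(y_0 y_1)=1$; hence every local magical element is forced to have the shape $z_Q = a_Q + b_Q y_0 + c_Q y_1 + y_0 y_1$ with $a_Q, b_Q, c_Q \in k((x_Q))$; and (ii) one checks directly from the ramification table that $\ord_P(y_i) \ge -d_P'$ for all $P \in R$ and $i \in \{0,1\}$. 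With these in hand, one simply finds $a, b, c \in k(\PP^1)$ regular off $B$ with $a - a_Q, b - b_Q, c - c_Q \in \mc O_{\PP^1, Q}$ for each $Q \in B$ (Mittag--Leffler on $\PP^1$), and sets $z := a + b y_0 + c y_1 + y_0 y_1$; then $\tr_{X/Y}(z) = 1$ and the bound $\ord_P(z) \ge -d_P'$ follows from (ii) together with $\ord_P(z_Q) \ge -d_P'$.

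Two concrete issues in your sketch. First, for $Q \in B(\VV_4)$ one has $d_P' = d_P - (e_P - 1) = d_P - 3 = m_Q + 2M_Q$, not $d_P - 1$. Second, your suggestion to rescale $y_0 y_1$ by a power of a uniformizer to hit order exactly $-d_P'$ cannot work: rescaling by an element of $k(Y)$ changes $\ord_P$ by a multiple of $e_P = 4$, whereas $\ord_P(y_0 y_1) - (-d_P') = -m_Q$ is odd. This is why the paper does not try to build a magical element directly from $y_0 y_1$ but instead uses the full basis decomposition and approximates the \emph{coefficients} $a_Q, b_Q, c_Q$. Your third-paragraph alternative (lift at a single $Q_0$ and multiply by a polynomial vanishing at the other branch points) can be made to work, but you must also control what happens at $\infty \in \PP^1$, where your polynomial factor acquires a pole; the paper's approach sidesteps this by treating all $Q \in B$ symmetrically.
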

\begin{proof}
	Suppose that $X$ is given by the equations~\eqref{eqn:klein_eqn}, which are in standard form at every $Q \in \PP^1(k)$.
	Then for $P \in R$ and $i \in \{0, 1\}$:
	\begin{equation} \label{eqn:ord_y_i_at_least_d'}
		\ord_P(y_i) \ge -d_P'.
	\end{equation}
	Indeed, if $G_P = \VV_4$, then $\ord_P(y_i) \ge -2 M_Q > -(2M_Q+m_Q) = -d_P'$.
	If $G_P = H_a$, then $\ord_P(y_i) \ge -M_Q = -d_P'$.
	
	For any $Q \in B$, let $z_Q \in k(X)$ be an element such that $\tr_{X/Y}(z_Q) = 1$ and $\ord_P(z_Q) \ge -d_P'$ for every $P \in \pi^{-1}(Q)$.
	Note that such an element $z_Q$ exists by Corollary~\ref{cor:existence_of_magical}. The elements $1, y_0, y_1, y_0 y_1$ form the $k(Y)$-linear basis of $k(X)$
	and $\tr_{X/Y}(1) = \tr_{X/Y}(y_0) = \tr_{X/Y}(y_1) = 0$, $\tr_{X/Y}(y_0 y_1) = 1$. Therefore $z_Q$ must be of the form:
	\[
		z_Q = a_Q + b_Q y_0 + c_Q y_1 + y_0 y_1
	\]
	for some $a_Q, b_Q, c_Q \in k(\PP^1)$. Let $a, b, c \in k(\PP^1)$ be functions that are regular outside of~$B$ and such that
	\begin{equation} \label{eqn:a-aQ_etc}
		a - a_Q, \, b - b_Q, \, c - c_Q \in \mc O_{\PP^1, Q} \qquad \textrm{ for every } Q \in B.
	\end{equation}
	We will show that
	\[
		z := a + b y_0 + c y_1 + y_0 y_1 \in k(X)
	\]
	satisfies~\ref{enum:B}. Indeed, $\tr_{X/Y}(z) = 1$. Moreover, $z$ is regular outside of $B$ and for every $Q \in B$:
	\[
		z = z_Q + (z - z_Q) = z_Q + (a - a_Q) + (b - b_Q) y_0 + (c - c_Q) y_1
	\]
	has valuation at least $-d_Q'$ by~\eqref{eqn:ord_y_i_at_least_d'}, ~\eqref{eqn:a-aQ_etc} and by the definition of $z_Q$. This ends the proof.
\end{proof}
\section{The de~Rham cohomology of $\VV_4$-covers} \label{sec:de_Rham_of_Klein}
In this section we prove a more precise version of Theorem~\ref{thm:de_rham_of_klein_intro}.
Keep notation from the previous section. Suppose that $\pi$ satisfies condition~\ref{enum:B}. Since
$\langle G_Q : Q \in B \rangle = \VV_4$ by \cite[Lemma~3.6]{Garnek_p_gp_covers}, we may distinguish three cases:
\begin{itemize}
	\item \bb{Case 1:} $B(\VV_4) \neq \varnothing$,
	
	\item \bb{Case 2:} $B(\VV_4) = \varnothing$ and $B(H_0), B(H_1), B(H_{\infty}) \neq \varnothing$,
	
	\item \bb{Case 3:} $B(\VV_4) = B(H_a) = \varnothing$ and $B(H_b) = B(H_c) \neq \varnothing$,
	\item[] where $\{ a, b, c \} = \{ 0, 1, \infty \}$.
\end{itemize}
We adopt the convention $H_{-1} := \VV_4$, $N_{2, -1} := 0$.
\begin{Theorem} \label{thm:de_rham_of_klein}
	Let $k$ be an algebraically closed field of characteristic~$2$.
	Suppose that $\pi : X \to Y$ is a $\VV_4$-cover of smooth projective
	curves over $k$ that satisfies the condition~\ref{enum:B} (this holds automatically for example
	if $Y = \PP^1$). Keep the above notation. Then:
	\begin{align*}
		H^1_{dR}(X) &\cong k[\VV_4]^{\oplus 2 g_Y} \oplus I_{X/Y} \oplus I_{X/Y}^{\vee}\\
		&\oplus \bigoplus_{i \in \{ -1, 0, 1, \infty \}} \, \bigoplus_{Q \in B'(H_i)} N_{2, i}^{M_Q - m_Q} \oplus M_{3, 1}^{(m_Q-1)/2} \oplus M_{3, 2}^{(m_Q-1)/2},
	\end{align*}
	where:
	\[
	I_{X/Y} \cong
	\begin{cases}
		M_{3, 1}^{\# B(\VV_4) - 1} \oplus N_{2, 0}^{\# B(H_0)} \oplus N_{2, 1}^{\# B(H_1)} \oplus N_{2, \infty}^{\# B(H_{\infty})}, &
		\textrm{ in Case 1,}\\[10pt]
		M_{3, 2} \oplus N_{2, 0}^{\# B(H_0) - 1} \oplus N_{2, 1}^{\# B(H_1) - 1} \oplus N_{2, \infty}^{\# B(H_{\infty}) - 1}, &
		\textrm{ in Case 2,}\\[10pt]
		k \oplus N_{2, b}^{\# B(H_b) - 1} \oplus N_{2, c}^{\# B(H_c) - 1}, & \textrm{ in Case 3}.
	\end{cases}
	\]
\end{Theorem}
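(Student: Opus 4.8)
The plan is to apply Theorem~\ref{thm:cohomology_of_G_covers} and reduce everything to two computations: the module $I_{X/Y}$, and the local de~Rham cohomologies $H^1_{dR}(\ms X_Q)$ attached to the branch points. First I would invoke Theorem~\ref{thm:cohomology_of_G_covers} to write
\[
H^1_{dR}(X) \cong k[\VV_4]^{\oplus 2 g_Y} \oplus I_{X/Y} \oplus I_{X/Y}^{\vee} \oplus \bigoplus_{Q \in B} H^1_{dR}(\ms X_Q),
\]
which is legitimate because condition~\ref{enum:B} is assumed (and holds for $Y = \PP^1$ by Proposition~\ref{prop:klein_of_P1_has_magical_elt}). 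It remains to identify the three types of summands with the claimed indecomposable $k[\VV_4]$-modules.

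For the term $I_{X/Y}$ I would use Lemma~\ref{lem:IXY_and_IXY_prim} to split off all but a minimal set $S$ of branch points; in Case~1 we can take $S = \{Q_0\}$ for a single $Q_0 \in B(\VV_4)$ (so $I(S) = I_{G, \VV_4} = I_G$ and the leftover points contribute $I_{G, G_Q}$ each), while in Cases~2 and~3 we take $S$ to consist of one point from each of the relevant $B(H_i)$'s. One then has to compute the small kernel $I(S)$ and identify each $I_{G, H}$ as a $k[\VV_4]$-module. A direct matrix computation shows $I_{G, \{e\}} = I_G \cong M_{3,1}$ (a $3$-dimensional indecomposable), $I_{G, H_i} \cong N_{2, i}$ for $i \in \{0,1,\infty\}$, and the small kernels $I(S)$ in Cases~2 and~3 come out as $M_{3,2}$ and $k$ respectively (for instance in Case~3, $I(S) = \ker(I_{G,H_b} \oplus I_{G,H_c} \to I_G)$ is one-dimensional with trivial action). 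I would record these in a separate lemma (referenced in the excerpt as Lemma~\ref{lem:IXY_of_Klein_covers}), also noting that $N_{2,i}^{\vee} \cong N_{2,i}$ and $I(S)^{\vee}$ stays in the same list, so the $I_{X/Y}^{\vee}$ term is already accounted for by writing $I_{X/Y} \oplus I_{X/Y}^{\vee}$ in the final formula.

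For the local terms, Theorem~\ref{thm:cohomology_of_G_covers} gives $H^1_{dR}(\ms X_Q) \cong \Ind^G_{G_Q} H^1_{dR}(\ms X_Q^{\circ})$, and the structure of the ramification filtration recalled in Section~\ref{sec:klein_covers} tells us exactly which HKG-cover $\ms X_Q^{\circ}$ occurs: when $G_Q = H_a$ it is a $\ZZ/2$-cover with a single jump $M_Q$, and when $G_Q = \VV_4$ it is a $\VV_4$-HKG cover with jumps $(m_Q, M_Q)$. For the $\ZZ/2$ case the de~Rham cohomology of an Artin--Schreier cover is classical (and $\Ind^{\VV_4}_{H_a}$ of the trivial $\ZZ/2$-module structure, after accounting for the HKG shape, rearranges into the $N_{2,i}$ and $M_{3,j}$ pieces as in the $B'(H_i)$ sum), while for the $\VV_4$ case the decomposition into $N_{2,i}^{M_Q - m_Q} \oplus M_{3,1}^{(m_Q-1)/2} \oplus M_{3,2}^{(m_Q-1)/2}$ is precisely the content of Proposition~\ref{prop:dR_of_Klein_HKG}, which I would cite here and prove separately. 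Assembling the HKG-contributions indexed by which $B'(H_i)$ each point lies in (using that a point in $B(\VV_4) \cap B'(H_a)$ has $\ms X_Q^{\circ}$ a genuine $\VV_4$-cover whereas a point in $B(H_a)$ has it a $\ZZ/2$-cover) yields the displayed sum over $i \in \{-1, 0, 1, \infty\}$.

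The main obstacle is Proposition~\ref{prop:dR_of_Klein_HKG}: computing the equivariant de~Rham cohomology of a $\VV_4$-HKG cover with prescribed jumps and showing it decomposes with \emph{only} the modules $N_{2,i}$, $M_{3,1}$, $M_{3,2}$ (no other indecomposables, in particular none of the infinitely many that occur for $H^0(\ms X, \Omega_{\ms X})$ alone by \cite{Bleher_Camacho_Holomorphic_differentials}). This requires an explicit basis of $H^1_{dR}(\ms X_Q^{\circ})$ via the $Z^1_S/B^1_S$ description from Section~\ref{sec:prelim}, working out the $\sigma$- and $\tau$-actions on that basis and matching the resulting matrices against Table~\ref{tab:modules}; the cancellation phenomenon (de~Rham being far more rigid than Hodge) is what makes the $7$-module list possible and is the technical heart of the argument. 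By contrast, the $I_{X/Y}$ identifications and the $\ZZ/2$-HKG computations are routine once the ramification data of Section~\ref{sec:klein_covers} is in hand.
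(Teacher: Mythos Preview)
Your proposal is correct and follows essentially the same route as the paper: apply Theorem~\ref{thm:cohomology_of_G_covers}, compute $I_{X/Y}$ via Lemma~\ref{lem:IXY_and_IXY_prim} (this is exactly Lemma~\ref{lem:IXY_of_Klein_covers}), and then invoke Proposition~\ref{prop:dR_of_Klein_HKG} for the local pieces, correctly identifying that proposition as the technical heart.

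Two small slips worth flagging. First, in Case~1 with $S = \{Q_0\}$ you wrote ``$I(S) = I_{G,\VV_4} = I_G$'', but in fact $I(S) = \ker(I_G \to I_G) = 0$; the remaining $\#B(\VV_4)-1$ copies of $M_{3,1}$ come from the points in $B(\VV_4)\setminus S$ via the $\bigoplus_{Q \in B \setminus S} I_{G,G_Q}$ term. Second, when $G_Q = H_a$ the local contribution is purely $N_{2,a}^{M_Q-1}$ (since $m_Q = 1$ by convention, so $(m_Q-1)/2 = 0$); no $M_{3,j}$ pieces appear there, and Proposition~\ref{prop:dR_of_Klein_HKG} already subsumes this case rather than requiring a separate $\ZZ/2$ argument.
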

Theorem~\ref{thm:de_rham_of_klein} will be proven at the end of this section. Note that Theorem~\ref{thm:de_rham_of_klein_intro} is a direct corollary of
Theorem~\ref{thm:de_rham_of_klein}. We discuss now numerical examples.
\begin{Example} \label{ex:elliptic_curve_ctn}
	Let $\pi : X \to Y$ be as in Example~\ref{ex:HKG_covers}. Thus $Y$ is the elliptic curve
	with the affine equation $w^2 + w = u^3$ and $X$ is given by~\eqref{eqn:klein_eqn} with $h_0 = w^3 + \frac 1{w^7}, h_1 = w^5 + \frac 1{w^7}$. Then $\pi$ is unramified outside of $\{ Q_1, Q_2 \}$,
	since those are the only poles of $h_0$ and $h_1$, and is in standard form
	at $Q_1$ and at $Q_2$. Hence, since $\ord_{Q_1}(w) = 3$, $\ord_{Q_2}(w) = -3$ and $h_{\infty} \in \mc O_{Y, Q_1}$,
	we have:
	\begin{align*}
		(m_{Q_1}, M_{Q_1}) &= (1, \pole_{Q_1}(h_1))
		= (1, 21),\\
		(m_{Q_2}, M_{Q_2}) &= (\pole_{Q_2}(h_0), \pole_{Q_2}(h_1))
		= (9, 15).
	\end{align*}
	By the discussion in Section~\ref{sec:klein_covers}, $Q_1 \in  B(H_{\infty})$, $Q_2 \in B(\VV_4) \cap B'(H_0)$ (hence we are in Case~1).
	Moreover, we see that
	$d_{X/Y, Q_1} = 22$, $d_{X/Y, Q_2} = 42$. Hence, by Riemann--Hurwitz formula:
	\[
	2(g_X - 1) = 4 \cdot 2 \cdot (1 - 1) + 2 \cdot 22 + 42,
	\]
	i.e. $g_X = 44$. Let $X_1$ be the $\ZZ/2$-cover
	of $Y$ given by $y_{\infty}^2 + y_{\infty} = h_{\infty}$.  
	Then $d_{X_1/Y, Q_1} = 16$ (cf. \cite[Lemma~4.2]{Garnek_equivariant}) and $g_{X_1} = 9$. Hence
	$g_{X_1} > 2 \cdot 2 \cdot g_Y$ and $g_X > 2 \cdot 2 \cdot g_{X_1}$. 
	By \cite[Lemma~7.3, Lemma~7.2]{Garnek_p_gp_covers} both covers $X \to X_1$ and $X_1 \to Y$
	have magical elements. Hence by \cite[Lemma 8.1]{Garnek_p_gp_covers}, the cover $X \to Y$
	also has a magical element. By Theorem~\ref{thm:de_rham_of_klein}:
	\[
		H^1_{dR}(X) \cong k[\VV_4]^{\oplus 2} \oplus N_{2, \infty}^{\oplus 22} \oplus N_{2, 0}^{\oplus 6} \oplus M_{3, 1}^{\oplus 4}
		\oplus M_{3, 2}^{\oplus 4}.
	\]
\end{Example}
\begin{Example} \label{ex:cover_of_P1}
	Let $\pi : X \to Y$ be given by the equations~\eqref{eqn:klein_eqn}, where $Y = \PP^1$ and $h_0, h_1 \in k(Y)$,
	$h_0 = x^3 + x + \frac{1}{(x - 1)^7}$, $h_1 = x^3 + x + \frac{1}{x^5}$.
	Let $Q_i := i \in \PP^1$ for $i = 0, 1, \infty$. Then the branch locus of $\pi$ is contained in $\{ Q_0, Q_1, Q_{\infty} \}$,
	as those are the only poles of $h_0$ and~$h_1$. Moreover:
	\begin{align*}
		(\pole_{Q_0}(h_0), \pole_{Q_0}(h_1), \pole_{Q_0}(h_{\infty})) &= (0, 5, 5),\\
		(\pole_{Q_1}(h_0), \pole_{Q_1}(h_1), \pole_{Q_1}(h_{\infty})) &= (7, 0, 7),\\
		(\pole_{Q_{\infty}}(h_0), \pole_{Q_{\infty}}(h_1), \pole_{Q_{\infty}}(h_{\infty})) &= (3, 3, 0).
	\end{align*}
	By the discussion in Section~\ref{sec:klein_covers}, $Q_i \in B(H_i)$ for $i = 0, 1, \infty$ and
	$(M_{Q_0}, M_{Q_1}, M_{Q_{\infty}}) = (5, 7, 3)$. Thus we are in Case 2 and by Theorem~\ref{thm:de_rham_of_klein}:
	\[
	H^1_{dR}(X) \cong M_{3, 1} \oplus M_{3, 2} \oplus N_{2, 0}^{\oplus 4} \oplus N_{2, 1}^{\oplus 6} \oplus N_{2, \infty}^{\oplus 2}.
	\]
\end{Example}

We start the proof of Theorem~\ref{thm:de_rham_of_klein} by computing the module $I_{X/Y}$.
\begin{Lemma} \label{lem:IXY_of_Klein_covers}
	Keep the above notation. Then:
	\[
	I_{X/Y} \cong
	\begin{cases}
		M_{3, 1}^{\# B(\VV_4) - 1} \oplus N_{2, 0}^{\# B(H_0)} \oplus N_{2, 1}^{\# B(H_1)} \oplus N_{2, \infty}^{\# B(H_{\infty})}, &
		\textrm{ in Case 1,}\\[10pt]
		M_{3, 2} \oplus N_{2, 0}^{\# B(H_0) - 1} \oplus N_{2, 1}^{\# B(H_1) - 1} \oplus N_{2, \infty}^{\# B(H_{\infty}) - 1}, &
		\textrm{ in Case 2,}\\[10pt]
		k \oplus N_{2, b}^{\# B(H_b) - 1} \oplus N_{2, c}^{\# B(H_c) - 1}, & \textrm{ in Case 3}.
	\end{cases}
	\]
\end{Lemma}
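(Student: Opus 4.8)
The module $I_{X/Y}$ is defined as the kernel of the summation map $\bigoplus_{Q\in B} I_{G,G_Q}\to I_G$, and we want to identify it as a $k[\VV_4]$-module in each of the three cases. The first thing I would do is apply Lemma~\ref{lem:IXY_and_IXY_prim} to reduce the index set $B$ to a minimal subset $S$ for which $\langle G_Q:Q\in S\rangle=\VV_4$: since every proper subgroup of $\VV_4$ that occurs is one of $H_0,H_1,H_\infty$ (or $\VV_4$ itself), one copy of $I_{G,G_Q}$ for each $Q\in B\setminus S$ splits off as a direct summand. Because $G_{Q'}\subset\VV_4$ trivially, in Case~1 one can take $S=\{Q_0\}$ for a single $Q_0\in B(\VV_4)$; in Case~2 one takes $S$ to consist of one point from each of $B(H_0)$, $B(H_1)$, $B(H_\infty)$; in Case~3 one takes one point from each of $B(H_b)$, $B(H_c)$. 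The leftover summands are then $N_{2,0}^{\#B(H_0)}\oplus N_{2,1}^{\#B(H_1)}\oplus N_{2,\infty}^{\#B(H_\infty)}$ (plus $M_{3,1}^{\#B(\VV_4)-1}$ coming from the extra $\VV_4$-points $I_{G,\VV_4}=I_G$ in Case~1, using $I_G\cong M_{3,1}$), and in each case it remains to compute the small kernel $I(S)$.

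\textbf{Computing the basic pieces.} So the heart of the matter is to identify a handful of explicit $k[\VV_4]$-modules. I would first record the isomorphism types: $I_G=I_{\VV_4}$ is $3$-dimensional and an easy direct computation (writing down the action of $\sigma,\tau$ on the basis $\sigma-1,\tau-1,\sigma\tau-1$ and comparing with Table~\ref{tab:modules}) gives $I_G\cong M_{3,1}$ (and $I_G^\vee\cong M_{3,2}$). Next, for a subgroup $H=H_a$ of order $2$, $I_{G,H}=\Ind^G_H I_H$ is $2$-dimensional: $I_H\cong k$ as a trivial $H$-module is the wrong statement — rather $I_H$ is the sign-free $1$-dimensional augmentation ideal of $\ZZ/2$, which over a field of characteristic $2$ is the trivial module, so $\Ind^G_H k$ is the $2$-dimensional module $k[\VV_4/H]$; computing its $\sigma,\tau$ action shows $I_{G,H_a}\cong N_{2,a}$ and is self-dual. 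These identifications should be stated as a short preliminary computation or cited from the prequel.

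\textbf{The three kernels.} With these in hand: in Case~1, $I(S)$ with $S=\{Q_0\}$ and $G_{Q_0}=\VV_4$ is $\ker(I_G\xrightarrow{\ \id\ } I_G)=0$, so nothing extra appears and we get the Case~1 formula. In Case~3, $I(S)=\ker(N_{2,b}\oplus N_{2,c}\xrightarrow{\Sigma} M_{3,1})$; a dimension count gives $\dim=2+2-3=1$, and the kernel is a submodule killed by $(\sigma-1)$ and $(\tau-1)$ — hence the trivial module $k$ — and one checks it is indeed contained in the kernel (the diagonal-type element $(v_b,v_c)$ mapping to $0$), yielding the Case~3 formula. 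In Case~2, $I(S)=\ker(N_{2,0}\oplus N_{2,1}\oplus N_{2,\infty}\xrightarrow{\Sigma} M_{3,1})$ has dimension $6-3=3$; I would show this kernel is isomorphic to $M_{3,2}=I_G^\vee$, e.g.\ by exhibiting it as the image of a dual map or by directly computing the $\sigma,\tau$-matrices on an explicit basis and matching Table~\ref{tab:modules}.

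\textbf{Main obstacle.} The routine but error-prone part is the explicit matrix bookkeeping in Case~2 — tracking bases of $I_{G,H_a}$ under the identification with $N_{2,a}$ (the ordering of the basis matters for getting $M_{3,2}$ rather than $M_{3,1}$), and verifying the map $\Sigma$ restricted to the kernel has exactly the claimed Jordan/commuting-matrix structure. The conceptually cleanest route around this is to dualize: $\Sigma:\bigoplus_a I_{G,H_a}\to I_G$ has the augmentation-type transpose, and since all $N_{2,a}$ and $M_{3,1}$ have well-understood duals, one gets $I(S)^\vee\cong\coker(I_G^\vee\to\bigoplus_a N_{2,a})$, which can be computed by a surjectivity/rank argument; then dualize back. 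I expect this to be the main place care is needed, but no deep idea is required.
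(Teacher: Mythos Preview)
Your proposal is correct and follows essentially the same route as the paper: both reduce via Lemma~\ref{lem:IXY_and_IXY_prim} to a small set $S$ (a single $\VV_4$-point in Case~1, one point from each $B(H_i)$ in Case~2, and one from each of $B(H_b),B(H_c)$ in Case~3), identify $I_{\VV_4}\cong M_{3,1}$ and $I_{\VV_4,H_a}\cong N_{2,a}$ by direct matrix comparison, and then compute the residual kernel $I(S)$ case by case. The paper handles Case~2 by writing down an explicit three-element basis of $I(S)$ inside $\bigoplus_{Q\in S}k[G]$ and reading off the $\sigma,\tau$-action to get $M_{3,2}$; your alternative dualization idea would also work but is not what the paper does. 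One small point: your dimension counts $6-3$ and $4-3$ tacitly use that $\Sigma$ is surjective onto $I_G$, which is true (any two of the $I_{G,H_a}$ already span $I_G$) but worth stating.
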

\begin{proof}
	Before the proof note that $I_{\VV_4} \cong M_{3,1}$,
	which follows by computing the matrices of $\sigma$
	and $\tau$ in the basis $(e + \sigma + \tau + \sigma \tau, e + \sigma, e + \tau)$ and comparing them with the matrices in Table~\ref{tab:modules}. Moreover, $I_{\VV_4, H_i} \cong N_{2, i}$ for $i \in \{ 0, 1, \infty \}$. Indeed, for example the matrices of $\sigma$
	and $\tau$ acting on $I_{\VV_4, H_0}$ in the basis $(e + \sigma + \tau + \sigma \tau, e + \tau)$ match the matrices in Table~\ref{tab:modules}.\\ \mbox{}\\
	\bb{Case 1:} Let $Q' \in B(\VV_4)$. Then, by Lemma~\ref{lem:IXY_and_IXY_prim}:
	\begin{align*}
		I_{X/Y} &\cong \bigoplus_{Q \neq Q'} I_{\VV_4, G_Q} \oplus I(\{ Q' \})\\
		&\cong M_{3, 1}^{\# B(\VV_4) - 1} \oplus N_{2, 0}^{\# B(H_0)} \oplus N_{2, 1}^{\# B(H_1)} \oplus N_{2, \infty}^{\# B(H_{\infty})} \oplus \ker(I_{\VV_4} \to I_{\VV_4})\\
		&\cong M_{3, 1}^{\# B(\VV_4) - 1} \oplus N_{2, 0}^{\# B(H_0)} \oplus N_{2, 1}^{\# B(H_1)} \oplus N_{2, \infty}^{\# B(H_{\infty})}.
	\end{align*}
	\bb{Case 2:} Suppose that $Q_i \in B(H_i)$ for $i \in \{ 0, 1, \infty \}$. Then, by Lemma~\ref{lem:IXY_and_IXY_prim}
	for $S := \{ Q_0, Q_1, Q_{\infty} \}$:
	\begin{align*}
		I_{X/Y} &\cong \bigoplus_{Q \in B \setminus S} I_{\VV_4, G_Q} \oplus I(S)\\
		&\cong N_{2, 0}^{\# B(H_0) - 1} \oplus N_{2, 1}^{\# B(H_1) - 1}
		\oplus N_{2, \infty}^{\# B(H_{\infty}) - 1} \oplus I(S).
	\end{align*}		
	Moreover, one easily checks that the basis of $I(S)$ is:
	\begin{align*}
		e_1 &:= e_{Q_0} + \sigma_{Q_0} + \tau_{Q_0} + (\sigma \tau)_{Q_0} + e_{Q_{\infty}} + \sigma_{Q_{\infty}} + \tau_{Q_{\infty}}
		+ (\sigma \tau)_{Q_{\infty}},\\
		e_2 &:= e_{Q_1} + \sigma_{Q_1} + \tau_{Q_1} + (\sigma \tau)_{Q_1} + e_{Q_{\infty}} + \sigma_{Q_{\infty}} + \tau_{Q_{\infty}}
		+ (\sigma \tau)_{Q_{\infty}},\\
		e_3 &:= \sigma_{Q_0} + (\sigma \tau)_{Q_0} + \tau_{Q_1} + (\sigma \tau)_{Q_1} + \sigma_{Q_{\infty}} + \tau_{Q_{\infty}}.
	\end{align*}
	Then $\sigma(e_1) = \tau(e_1) = e_1$, $\sigma(e_2) = \tau(e_2) = e_2$
	and $\sigma(e_3) = e_3 + e_1$, $\tau(e_3) = e_3 + e_2$. This shows that
	$\Span_{k}(e_1, e_2, e_3) \cong M_{3, 2}$ (cf. Table~\ref{tab:modules}).\\ \mbox{} \\
	\bb{Case 3:} Suppose that $Q_b \in B(H_b)$, $Q_c \in B(H_c)$. Using Lemma~\ref{lem:IXY_and_IXY_prim} for
	$S := \{ Q_b, Q_c \}$ analogously as in (a) and (b):
	\begin{align*}
		I_{X/Y} \cong N_{2, b}^{\# B(H_b) - 1} \oplus N_{2, c}^{\# B(H_c) - 1} \oplus I(S).
	\end{align*}
	The $k[\VV_4]$-module $I(S)$ is one-dimensional, which implies it must be the trivial representation of $\VV_4$.
\end{proof}
The following result will be proven in Section~\ref{sec:klein_hkg_covers}.
\begin{Proposition} \label{prop:dR_of_Klein_HKG}
	Let $k$ be an algebraically closed field of characteristic $2$ and $G = \ZZ/2 \times \ZZ/2$.
	Suppose that $\ms X \to \PP^1$ is a $\VV_4$-HKG-cover and $B = B'(H_i)$.
	Denote $m := m_{\ms X/\PP^1, \infty}$, $M := M_{\ms X/\PP^1, \infty}$.
	Then, as $k[\VV_4]$-modules:
	\begin{equation*}
		H^1_{dR}(\ms X) \cong N_{2, i}^{\oplus (M - m)} \oplus M_{3, 1}^{\oplus (m - 1)/2} \oplus M_{3, 2}^{\oplus (m - 1)/2}.
	\end{equation*}
\end{Proposition}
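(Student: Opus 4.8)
The plan is to reduce $\ms X$ to an explicit Artin--Schreier model and then feed it into the description of the cohomology of HKG-covers in Theorem~\ref{thm:cohomology_of_HKG_etale_algebras}. \emph{Setup.} The group $S_3$ permutes the order-two subgroups $H_0, H_1, H_\infty$ (it acts by relabelling $y_0, y_1, y_0+y_1$), so it is enough to treat $i=0$; the degenerate case $i=-1$, i.e.\ $B=B'(\VV_4)$ and $m=M$, comes out of the same argument with the $N_{2,i}$-summand empty. Being a connected $\VV_4$-HKG-cover branched only over $\infty$, $\ms X$ can be written as $k(\ms X)=k(x)(y_0,y_1)$ with $y_0^2+y_0=f_0$, $y_1^2+y_1=f_1$, where $x$ is a uniformizer at $\infty$, $f_0,f_1\in k[x^{-1}]$ have odd degrees $m\le M$ and are in standard form at $\infty$, and $\sigma(y_0)=y_0+1$, $\tau(y_1)=y_1+1$, $\sigma(y_1)=y_1$, $\tau(y_0)=y_0$. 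The ramification filtration at the unique point $P$ over $\infty$ is then $\VV_4=G_0=\dots=G_m>G_{m+1}=\dots=G_{m+2(M-m)}=H_0>\{e\}$, so $d'=m+2M$ and $2g_{\ms X}=d''=m+2M-3$; in particular the module in the statement has the correct dimension, $2(M-m)+3(m-1)=m+2M-3$.

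\emph{Hodge pieces.} Set $\ms A=k[[x]]\subseteq\ms B=\wh{\mc O}_{\ms X,P}=k[[t]]$, with fraction fields $\ms K=k((x))\subseteq\ms L=k((t))$, and fix a magical element $z\in\ms L$ with $\ord_P(z)=-d'$ — one exists by Corollary~\ref{cor:existence_of_magical}, and concretely one may take $z$ a product of standard-form Artin--Schreier generators of the tower $\ms K\subset\ms L^{H_0}\subset\ms L$, as in the proof of Lemma~\ref{lem:criterion_magical}. By Theorem~\ref{thm:cohomology_of_HKG_etale_algebras}, $H^0(\ms X,\Omega_{\ms X})\cong\Omega_{\ms B}^{log}\big/\bigoplus_{g\in\VV_4}g^*(z)\,\Omega_{\ms A}^{log}$ and $H^1(\ms X,\mc O_{\ms X})\cong\bigoplus_{g\in\VV_4}g^*(z^\vee)\,\mf m_{\ms A}\big/\mf m_{\ms B}$. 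I compute these $k[\VV_4]$-modules explicitly: choosing a $k$-basis of $\Omega_{\ms B}^{log}=t^{-1}k[[t]]\,dt$ ordered by pole order at $P$, reducing modulo the free submodule $\bigoplus_g g^*(z)\,\Omega_{\ms A}^{log}$, one is left with $g_{\ms X}$ classes, and one tracks the (unipotent) action of $\sigma$ and $\tau$ on them — the crucial point being that $\sigma$ and $\tau$ move each basis vector only by terms of strictly smaller pole order, and that the valuation ranges in which the two Artin--Schreier generators do or do not both contribute separate the ``$N_{2,0}$-part'' from the ``$M_{3,j}$-part''. This should give $H^0(\ms X,\Omega_{\ms X})\cong N_{2,0}^{\oplus(M-m)/2}\oplus M_{3,1}^{\oplus p}\oplus M_{3,2}^{\oplus q}$ with $p+q=(m-1)/2$ (the individual $p,q$ are governed by finer local data and will not be needed), and dually $H^1(\ms X,\mc O_{\ms X})\cong N_{2,0}^{\oplus(M-m)/2}\oplus M_{3,2}^{\oplus p}\oplus M_{3,1}^{\oplus q}$.

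\emph{The de~Rham extension.} Finally I put these into the fibre-product description of $H^1_{dR}(\ms X)$ in Theorem~\ref{thm:cohomology_of_HKG_etale_algebras} and show that the exact sequence $0\to H^0(\ms X,\Omega_{\ms X})\to H^1_{dR}(\ms X)\to H^1(\ms X,\mc O_{\ms X})\to 0$ splits as a sequence of $k[\VV_4]$-modules — either by exhibiting an equivariant section built from the classes $g^*(z^\vee)x^{-j}$ and logarithmic classes of the shape $(t^{-1}\,dt,\mathrm{const})$, or, more soberly, by checking that $\dim H^1_{dR}(\ms X)^H$ agrees with $\dim\bigl(H^0(\ms X,\Omega_{\ms X})\oplus H^1(\ms X,\mc O_{\ms X})\bigr)^H$ for each $H\le\VV_4$ (the left side being computable from $(\pi_*\Omega_{\ms X}^\bullet)^H$ on $\ms X/H$, resp.\ from the known $\ZZ/2$-case \cite[Corollary~1.2]{Garnek_p_gp_covers} after restriction). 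Hence $H^1_{dR}(\ms X)\cong H^0(\ms X,\Omega_{\ms X})\oplus H^1(\ms X,\mc O_{\ms X})\cong N_{2,0}^{\oplus(M-m)}\oplus M_{3,1}^{\oplus(m-1)/2}\oplus M_{3,2}^{\oplus(m-1)/2}$, the asymmetry between $p$ and $q$ washing out because $p+q=(m-1)/2$. Replacing $N_{2,0}$ by $N_{2,i}$ handles the remaining values of $i$.

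\emph{Main obstacle.} The hard part is the second step: pinning down the genuine $\VV_4$-action — not merely dimensions or Jordan types — on the explicit quotient spaces, in particular isolating the copies of $N_{2,0}$ and separating $M_{3,1}$ from $M_{3,2}$. This requires a careful choice of Artin--Schreier normal forms at $\infty$ and precise control of how $\sigma$ and $\tau$ move the chosen valuation-ordered basis vectors of $\Omega_{\ms B}^{log}$ and of $\ms L/\mf m_{\ms B}$ relative to the submodules $\bigoplus_g g^*(z)\Omega_{\ms A}^{log}$ and $\bigoplus_g g^*(z^\vee)\mf m_{\ms A}$; establishing the clean $k[\VV_4]$-splitting in the third step is also delicate.
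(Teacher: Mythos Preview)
Your second step is not just hard --- it is false. The $k[\VV_4]$-module $H^0(\ms X,\Omega_{\ms X})$ is \emph{not} in general of the form $N_{2,0}^{\oplus(M-m)/2}\oplus M_{3,1}^{\oplus p}\oplus M_{3,2}^{\oplus q}$. This is precisely the content of \cite{Bleher_Camacho_Holomorphic_differentials}: infinitely many non-isomorphic indecomposable $k[\VV_4]$-modules occur as summands of holomorphic differentials of Klein covers, including HKG-covers. Concretely, using the basis in Theorem~\ref{thm:basis_H0_Omega_HKG}, the action of $\sigma$ on $(y_1+\alpha y_0)x^i\,dx$ produces $\alpha\cdot x^i\,dx$, so the module structure of $H^0$ depends on the coefficients of the auxiliary polynomial~$\alpha$, not just on $(m,M)$. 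The whole point of the result is that this wildness of $H^0$ and its Serre dual $H^1$ cancels in $H^1_{dR}$; one cannot recover $H^1_{dR}$ by computing the two Hodge pieces separately and adding.

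Your fallback in the third step, checking $\dim(H^1_{dR})^H$ against $\dim(H^0\oplus H^1)^H$ for each $H\le\VV_4$, also cannot work: since $k[\VV_4]$ is of wild representation type, a finite list of fixed-point dimensions does not determine a $k[\VV_4]$-module, and in any case it would not detect whether the Hodge--de~Rham sequence splits equivariantly (it generally does not). The paper therefore takes a different route: rather than going through $H^0$ and $H^1$, it writes down explicit de~Rham cocycles $a_i,b_i,c_i,e_i,f_i,g_i,u_i,v_i$ directly in the \v{C}ech model, computes the action of $\sigma,\tau$ on them (Lemma~\ref{lem:automorphisms_on_abc_efg_uv}), and proves linear independence using Serre duality and the Bleher--Camacho basis of $H^0$ (Lemma~\ref{lem:afeu_linearly_independent}). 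Finally, you have also silently assumed $\ms X$ is connected; the case $G_\infty=H_i$ (so $m=1$ and $\ms X$ is disconnected) needs the separate reduction via $\Ind^{\VV_4}_{H_i}$ and the $\ZZ/2$-result.
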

We show now how Proposition~\ref{prop:dR_of_Klein_HKG} together with previous results
implies Theorem~\ref{thm:de_rham_of_klein}.
\begin{proof}[Proof of Theorem~{\ref{thm:de_rham_of_klein}}]
	Recall that if $Y = \PP^1$, the cover $\pi : X \to Y$ has a magical element by 
	Proposition~\ref{prop:klein_of_P1_has_magical_elt}.
	Using Theorem~\ref{thm:cohomology_of_G_covers}:
	\begin{align*}
		H^1_{dR}(X) &\cong k[\VV_4]^{\oplus 2 g_Y} \oplus I_{X/Y} \oplus I_{X/Y}^{\vee} \oplus \bigoplus_{i \in \{ -1, 0, 1, \infty \}} \bigoplus_{Q \in B'(H_i)} H^1_{dR}(\ms X_Q),
	\end{align*}
	The module $I_{X/Y}$ was computed in Lemma~\ref{lem:IXY_of_Klein_covers}.
	Suppose now that $Q \in B'(H_i)$. Then, by Proposition~\ref{prop:dR_of_Klein_HKG}:
	\begin{equation*}
		H^1_{dR}(\ms X_Q) \cong N_{2, i}^{\oplus (M_Q - m_Q)} \oplus M_{3, 1}^{\oplus (m_Q-1)/2} \oplus M_{3, 2}^{\oplus (m_Q-1)/2}.
	\end{equation*}
	The proof follows.
\end{proof}

\section{HKG-covers for $\VV_4$} \label{sec:klein_hkg_covers}
This section will be devoted to the proof of Proposition~\ref{prop:dR_of_Klein_HKG}.
We keep the notation of Proposition~\ref{prop:dR_of_Klein_HKG}. 
Suppose at first that $G_{\infty} = H_i$. In particular, $m = 1$. Then $H^1_{dR}(\ms X^{\circ}) \cong k^{\oplus (M - 1)}$ by \cite[Corollary~1.2]{Garnek_p_gp_covers}. Hence:
\[
H^1_{dR}(\ms X) \cong \Ind^{\VV_4}_{H_i} H^1_{dR}(\ms X^{\circ}) \cong (\Ind^{\VV_4}_{H_i} k)^{\oplus (M - 1)} \cong N_{2, i}^{\oplus (M - 1)},
\]
which proves Proposition~\ref{prop:dR_of_Klein_HKG} in this case.\\

From now on, we assume that $G_{\infty} = \VV_4$.
Without loss of generality, we may assume that $\ord_{\infty}(h_0) = -m$ and $\ord_{\infty}(h_1) = -M$. Indeed,
this is immediate if $\infty \in B'(\VV_4)$, since then $\ord_{\infty}(h_0) = \ord_{\infty}(h_1) = \ord_{\infty}(h_{\infty}) = -m = -M$.
If $\infty \in B'(H_1)$ (resp. $\infty \in B'(H_{\infty})$), then 
we may twist the action of $\VV_4$ by swapping $\tau$ and $\sigma$
(respectively $\tau$ and $\sigma \tau$). Under this twist, the module $N_{2, 0}$ maps to $N_{2, 1}$
(respectively $N_{2, \infty}$).\\

We may assume that the function field of $\ms X$
is given by the equations~\eqref{eqn:klein_eqn} in standard local form at every $Q \in \PP^1$.
Thus $h_0, h_1 \in k[x]$ and $\deg(h_0) = m$, $\deg(h_1) = M$.
For brevity, we write $m' := \frac{m-1}2$, $M' := \frac{M-1}2$, $M'' := \frac{M - m}{2}$ and $M_0 := \left \lfloor \frac{2M - m - 1}{4} \right\rfloor$.
Recall that $g_{\ms X} = \frac 12 d_{\infty}'' = \frac 12 (m + 2M - 3)$.
Let $P \in \ms X(k)$ be the point lying over $\infty \in \PP^1(k)$ and $V := X \setminus \{ P \}$.

Recall that there exists $\alpha \in k[x]$ of degree $M''$ such that $\ord_P(y_1 + \alpha y_0) = -2M + m$.
Indeed, by \cite[Corollary~3.10 and the proof of Theorem~3.11]{Wu_Scheidler_Ramification_groups_AS_extensions} 
there exists $\alpha \in k(x)$ with the desired properties. By taking the polynomial part,
we may assume that $\alpha \in k[x]$. Denote the coefficients of $\alpha$ as follows:
\[
\alpha = \alpha_0 + \alpha_1 \cdot x + \ldots + \alpha_{M''} \cdot x^{M''}.
\]
In order to prove Proposition~\ref{prop:dR_of_Klein_HKG} we construct an explicit basis of $H^1_{dR}(\ms X)$. Denote by $h_0'$, $h_1'$ the derivatives
of the polynomials $h_0, h_1 \in k[x]$. Note that any rational function
$T \in k(x)$ can be uniquely written as a sum $T = [T]_{\ge 0} + [T]_{< 0}$,
where $[T]_{\ge 0} \in k[x]$ and $[T]_{< 0}$ is a rational function, whose
numerator has lower degree then denominator.\\

Consider the elements of $\Omega_{\ms X}(V) \times k(\ms X)$ given by the following formulas:
\begin{itemize}[leftmargin=*]
	\item for $i \in I_1 := \{ 0, \ldots, m' - 1 \}$:
	\begin{alignat*}{4}
		a_i &:= \left(x^i \, dx, 0 \right),\\
		b_i &:= \left(y_1 x^i \, dx, \frac{y_0 y_1 x^i \alpha}{h_1' + \alpha h_0'} \right),\\
		c_i &:= \left(y_0 x^i \, dx, \frac{y_0 y_1 x^i}{h_1' + \alpha h_0'} \right),
		\shortintertext{\item for $i \in I_2 := \{ 1, \ldots, m' \}$:}
		e_i &:= \left(\left[\frac{h_1'}{x^i} \right]_{\ge 0} \, dx, \frac{y_1}{x^i} \right),\\
		f_i &:= \left(\left[\frac{h_0'}{x^i} \right]_{\ge 0} \, dx, \frac{y_0}{x^i} \right),\\
		g_i &:= \left(y_1 \cdot \left[\frac{h_0'}{x^i} \right]_{\ge 0} \, dx + y_0 \left[\frac{h_1'}{x^i} \right]_{\ge 0} \, dx, \frac{y_0 y_1}{x^i} + \frac{i \cdot y_1 \alpha h_0}{h_1' \cdot x^{i + 1}} \right),
		\intertext{\item for $i \in I_3 := \{ m' + 1, \ldots, M - 1 - m' \}$:}
		u_i &:= \left( \left[ \frac{h_1'}{x^i} \right]_{\ge 0} \, dx, \frac{y_1}{x^i} \right),\\
		v_i &:= \left(y_0 \left[ \frac{h_1'}{x^i} \right]_{\ge 0} \, dx, \frac{y_0 y_1}{h_1' + \alpha h_0'} \left[ \frac{h_1'}{x^i} \right]_{\ge 0} \right).
	\end{alignat*}
\end{itemize}
In the sequel we will need to compute valuations of several functions and forms on $\ms X$.
To this end we use~\eqref{eqn:valuation_of_diff_form} as well as the following relations:
\begin{itemize}
	\item $\ord_P(dx) = -8 + d_P = 2M + m - 5$,
	\item $\ord_P(h_0) = -4m$, $\ord_P(h_1) = -4M$,
	\item $\ord_P(y_0) = -2m$, $\ord_P(y_1) = -2M$,
	\item $\ord_P(y_1 + \alpha y_0) = -2M + m$,
	\item $\ord_P(y_0 y_1 + \alpha h_0) = -2M - m$.
\end{itemize}
For the proof of the last relation, note that:
\begin{align*}
	\ord_P(y_0 y_1 + \alpha h_0) &= \ord_P(y_0 y_1 + \alpha (y_0^2 + y_0)) = \ord_P(y_0 \cdot ((y_1 + \alpha y_0) + \alpha))\\
	&= -2m -2M + m = -2M - m.
\end{align*}
In the sequel we use also the fact that for any $T \in k(x)$:
\begin{equation} \label{eqn:P<0_is_holo}
	[T(x)]_{<0} \, dx, \quad y_0 \cdot [T(x)]_{<0} \, dx, \quad y_1 \cdot [T(x)]_{<0} \, dx \in \Omega_{\ms X, P}.
\end{equation}
Indeed, note that:
\begin{align*}
	\ord_P([T(x)]_{<0} \, dx) &\ge \ord_P(y_0 \cdot [T(x)]_{<0} \, dx) \ge \ord_P(y_1 \cdot [T(x)]_{<0} \, dx)\\
	&\ge -2M + 4 + 2M + m - 5 = m - 1 \ge 0.
\end{align*}
\begin{Lemma}
	The elements $(a_i)_{i \in I_1}$, $(b_i)_{i \in I_1}$, $(c_i)_{i \in I_1}$, $(e_i)_{i \in I_2}$, $(f_i)_{i \in I_2}$, $(g_i)_{i \in I_2}$, $(u_i)_{i \in I_3}$, $(v_i)_{i \in I_3}$ belong to $Z^1_B(\pi_* \Omega^{\bullet}_\ms X)$.
\end{Lemma}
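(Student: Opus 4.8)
The plan is to unwind the definition of $Z^1_B(\pi_*\Omega^\bullet_\ms X)$ — which, by the description recalled in Section~\ref{sec:prelim}, is the group of $1$-cocycles for the complex $\pi_*(\mc O_\ms X(-R) \stackrel{d}{\longrightarrow} \Omega_\ms X(R))$ used in the proof of Theorem~\ref{thm:cohomology_of_HKG_etale_algebras} — and then to check its two defining conditions term by term. Since $\ms X \to \PP^1$ is an HKG-cover with $G_\infty = \VV_4$, we have $B = \{\infty\}$ and a single point $P$ over $\infty$, so $V = \ms X \setminus \{P\}$ and a pair $(\omega,\nu) \in \Omega_{k(\ms X)} \times k(\ms X)$ lies in $Z^1_B(\pi_*\Omega^\bullet_\ms X)$ precisely when (i) $\omega$ is regular on $V$ and (ii) $\omega - d\nu$ has at most a simple pole at $P$, i.e. $\ord_P(\omega - d\nu) \ge -1$. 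Condition (i) is immediate for every element on the list: each $\omega$ is a $k[x]$-linear combination of $dx$, $y_0\,dx$ and $y_1\,dx$, and $x, y_0, y_1$ are regular on $V$ while $dx$ trivialises $\Omega_\ms X$ over $V$ (the cover $V \to \AA^1$ being étale). So the whole content is condition (ii).

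To verify (ii), I would compute $d\nu$ for each family using that in characteristic~$2$ one has $dy_0 = h_0'\,dx$, $dy_1 = h_1'\,dx$ and $h_0'' = h_1'' = 0$. For the families $(e_i), (f_i), (u_i)$ this collapses at once: for the appropriate $j \in \{0,1\}$, $d(y_j/x^i) = (h_j'/x^i)\,dx - (i y_j/x^{i+1})\,dx$, whence $\omega - d\nu = -[h_j'/x^i]_{<0}\,dx + (i y_j/x^{i+1})\,dx$, and both summands lie in $\Omega_{\ms X, P}$ by \eqref{eqn:P<0_is_holo} (note $i \ge 1$ in $I_2$ and $I_3$, so $x^{-(i+1)}$ equals its own $[\,\cdot\,]_{<0}$-part). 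For $(a_i)$ one has $\omega - d\nu = x^i\,dx$, and \eqref{eqn:valuation_of_diff_form} together with $i \le m'-1$ and $M \ge m$ gives $\ord_P(x^i\,dx) = -4i + m + 2M - 5 \ge -1$.

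The substantive families are $(b_i), (c_i), (v_i), (g_i)$, which carry $D := h_1' + \alpha h_0'$ in a denominator. For the first three, the key identity is
\[
y_0 h_1' + y_1 h_0' = y_0 D + h_0'(y_1 + \alpha y_0),
\]
which follows straight from the definition of $D$; substituting it into $d\nu$ (computed by the quotient rule, using $dD = \alpha' h_0'\,dx$), the numerator term proportional to $D^2$ cancels $\omega$ exactly, and the surviving terms are each either manifestly holomorphic at $P$ via \eqref{eqn:P<0_is_holo} or have order at $P$ bounded below by $-1$ upon invoking $\ord_P(y_1 + \alpha y_0) = -2M + m$ together with $\ord_P(h_0') = -4(m-1)$, $\ord_P(\alpha) = -2(M-m)$, $\ord_P(D) = -4(M-1)$ and the index constraints ($i \le m'-1$ for $(b_i), (c_i)$; $m'+1 \le i \le M-1-m'$ for $(v_i)$). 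The family $(g_i)$ is handled along the same lines but is the most delicate: differentiating the first summand $y_0 y_1/x^i$ of $\nu$ produces the potentially large term $(i y_0 y_1/x^{i+1})\,dx$, and the second summand $i y_1 \alpha h_0/(h_1' x^{i+1})$ is tailored so that its differential equals $(i \alpha h_0/x^{i+1})\,dx$ plus terms of manifestly high order at $P$; the two then combine into $i(y_0 y_1 + \alpha h_0)/x^{i+1}\,dx$, which by $\ord_P(y_0 y_1 + \alpha h_0) = -2M - m$ and $i \ge 1$ has order $4i - 1 \ge -1$ at $P$.

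The hard part is precisely this last round of order bookkeeping in the $D$-denominator families: one must get $\ord_P(h_1' + \alpha h_0')$ and $\ord_P(\alpha h_0')$ right (here the standard-form hypothesis is used to force $m, M$ odd, hence $\deg h_j'$ equal to the pole order of $h_j$ at $\infty$ minus one, and $\deg \alpha = M''$ is exactly what lifts $\ord_P(y_1 + \alpha y_0)$ up to $-2M + m$), and then verify that in each surviving term the inequalities defining $I_1, I_2, I_3$ are sharp enough to clear the threshold $-1$. The correction term $i y_1 \alpha h_0/(h_1' x^{i+1})$ in $(g_i)$, whose sole purpose is to trade the dangerous $(i y_0 y_1/x^{i+1})\,dx$ for $i(y_0 y_1 + \alpha h_0)/x^{i+1}\,dx$ via $\ord_P(y_0 y_1 + \alpha h_0) = -2M - m$, is the place where the argument is least transparent and where I would be most careful.
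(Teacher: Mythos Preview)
Your proposal is correct and follows essentially the same route as the paper: reduce the cocycle condition to a single local check at the unique point $P$ over~$\infty$, compute $\omega - d\nu$ family by family, and bound the order at~$P$ using the valuation identities listed just before the lemma (especially $\ord_P(y_1+\alpha y_0)=-2M+m$ and, for $g_i$, $\ord_P(y_0y_1+\alpha h_0)=-2M-m$). Your organizing identity $y_0 h_1' + y_1 h_0' = y_0 D + h_0'(y_1+\alpha y_0)$ is exactly what underlies the paper's clean factorizations for $b_i$, $c_i$, $v_i$; the paper reaches the same expressions via the product rule $d(y_0y_1\cdot g) = g\,d(y_0y_1) + y_0y_1\,dg$ rather than the quotient rule, but the computations coincide. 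One cosmetic difference: you state the threshold as $\ord_P(\omega-d\nu)\ge -1$, which is the precise requirement for $Z^1_B$ with $\mc F^1=\pi_*\Omega_{\ms X}(R)$, while the paper verifies the slightly stronger $\omega-d\nu\in\Omega_{\ms X,P}$; both are fine since the stronger bound in fact holds in every case.
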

\begin{proof}
	In order to show that a pair $(\omega, f) \in \Omega_{\ms X}(V) \times k(\ms X)$ is in 
	$Z^1_B(\pi_* \Omega^{\bullet}_\ms X)$, one has to show that 
	$\omega - df \in \Omega_{\ms X, P}$. 
	We have:
	\begin{itemize}[leftmargin=*]
		\item $a_i \in Z^1_B(\pi_* \Omega^{\bullet}_\ms X)$ for $i \in I_1$, since $\ord_P(x^i \, dx) = m + 2M - 5 - 4i \ge 0$,
		
		\item $b_i \in Z^1_B(\pi_* \Omega^{\bullet}_\ms X)$ for $i \in I_1$, since:
		\begin{align*}
			y_1 x^i \, dx - d \left( \frac{y_0 y_1 x^i \alpha}{h_1' + \alpha h_0'} \right)
			= \frac{h_1' x^i (y_1 + \alpha y_0) \, dx}{h_1' + \alpha h_0'} + y_0 y_1 \cdot d\left( \frac{x^i \alpha}{h_1' + \alpha h_0'} \right)  \in \Omega_{\ms X, P}
		\end{align*}
		(the first summand has valuation $2m - 4i - 5 \ge 0$, and the second $2M + m - 4i - 1 \ge 0$),
		
		\item $c_i \in Z^1_B(\pi_* \Omega^{\bullet}_\ms X)$ for $i \in I_1$, since:
		\begin{align*}
			y_0 x^i \, dx - d \left( \frac{y_0 y_1 x^i}{h_1' + \alpha h_0'} \right)
			= \frac{x^i h_0' (y_1 + \alpha y_0) \, dx}{h_1' + \alpha h_0'}
			+ y_0 y_1 \cdot d \left( \frac{x^i}{h_1' + \alpha h_0'} \right) \in \Omega_{\ms X, P}
		\end{align*}
		(the first summand has valuation $4M - 2m -4i - 5 {\ge} 0$ and the second $4M - m - 4i - 1 {\ge} 0$),
		\item $e_i \in Z^1_B(\pi_* \Omega^{\bullet}_\ms X)$ for $i \in I_2$
		and $u_i \in Z^1_B(\pi_* \Omega^{\bullet}_\ms X)$ for $i \in I_3$, since:
		\begin{align*}
			\left[\frac{h_1'}{x^i} \right]_{\ge 0} \, dx - d \left(\frac{y_1}{x^i} \right)
			= \left[\frac{h_1'}{x^i} \right]_{< 0} \, dx
			+ i \cdot \frac{y_1 \, dx}{x^{i+1}}  \in \Omega_{\ms X, P}
		\end{align*}
		(the first summand is holomorphic at $P$ by~\eqref{eqn:P<0_is_holo}, the second has valuation $m + 4i - 1 {\ge} 0$),
		\item $f_i \in Z^1_B(\pi_* \Omega^{\bullet}_\ms X)$ for $i \in I_2$, since:
		\begin{align*}
			\left[\frac{h_0'}{x^i} \right]_{\ge 0} \, dx -
			d \left(\frac{y_0}{x^i} \right) &=
			\left[\frac{h_0'}{x^i} \right]_{< 0} \, dx +
			i \cdot \frac{y_0 \, dx}{x^{i+1}}  \in \Omega_{\ms X, P},
		\end{align*}
		(the first summand is holomorphic at $P$ by~\eqref{eqn:P<0_is_holo}, the second has valuation
		$2M - m + 4i - 1 \ge 0$),		
		\item $g_i \in Z^1_B(\pi_* \Omega^{\bullet}_\ms X)$ for $i \in I_2$, since:
		\begin{align*}
			&y_1 \cdot \left[\frac{h_0'}{x^i} \right]_{\ge 0} \, dx + y_0 \left[\frac{h_1'}{x^i} \right]_{\ge 0} \, dx
			- d \left( \frac{y_0 y_1}{x^i} + \frac{i \cdot y_1 \alpha h_0}{h_1' \cdot x^{i+1}} \right)\\
			&= y_1 \cdot \left[\frac{h_0'}{x^i} \right]_{< 0} \, dx + y_0 \left[\frac{h_1'}{x^i} \right]_{< 0} \, dx
			+ i \cdot \frac{y_0 y_1 + \alpha h_0}{x^{i+1}} \, dx + i \cdot y_1 \cdot d \left(\frac{\alpha h_0}{h_1' \cdot x^{i+1}}\right) \in \Omega_{\ms X, P}
		\end{align*}
		(the first two terms are holomorphic by~\eqref{eqn:P<0_is_holo}, the third term has valuation $4i - 1$
		and the fourth term valuation $2M - m + 4i + 3 \ge 0$),
		\item $v_i \in Z^1_B(\pi_* \Omega^{\bullet}_\ms X)$ for $i \in I_3$, since:
		\begin{align*}
			&y_0 \left[ \frac{h_1'}{x^i} \right]_{\ge 0} \, dx
			 - d \left(\frac{y_0 y_1}{h_1' + \alpha h_0'} \left[ \frac{h_1'}{x^i} \right]_{\ge 0} \right)\\
			&= \left[ \frac{h_1'}{x^i} \right]_{\ge 0} \cdot \frac{h_0' \cdot (y_1 + \alpha y_0)}{h_1' + \alpha h_0'} \, dx
			+ y_0 y_1 \cdot d \left( \frac{\left[ h_1'/x^i \right]_{\ge 0}}{h_1' + \alpha h_0'} \right) \in \Omega_{\ms X, P}
		\end{align*}
		($\left[ \frac{h_1'}{x^i} \right]_{\ge 0}$ is a polynomial of degree $M-1-i$. Hence the first
		summand has valuation $4i - 2m - 1 \ge 0$ and the second one valuation $4i - m + 3 \ge 0$).
	\end{itemize}
\end{proof}
\begin{Lemma} \label{lem:automorphisms_on_abc_efg_uv}
	We have the following equalities in $H^1_{dR}(\ms X)$:
	\begin{alignat*}{3}
		\sigma(a_i) = a_i, \quad &\sigma(b_i) = b_i, \quad &\sigma(c_i) = c_i + a_i, \\
		\tau(a_i) = a_i, \quad &\tau(b_i) = b_i + a_i, \quad &\tau(c_i) = c_i, \\
		\sigma(e_i) = e_i, \quad &\sigma(f_i) = f_i, \quad &\sigma(g_i) = g_i + e_i, \\
		\tau(e_i) = e_i, \quad &\tau(f_i) = f_i, \quad &\tau(g_i) = g_i + f_i, \\
		\sigma(u_i) = u_i, \quad &\sigma(v_i) = v_i + u_i,\\
		\tau(u_i) = u_i, \quad &\tau(v_i) = v_i.
	\end{alignat*}
\end{Lemma}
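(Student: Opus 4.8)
The plan is to use the presentation $H^1_{dR}(\ms X) \cong Z^1_B(\pi_*\Omega^{\bullet}_{\ms X})/B^1_B(\pi_*\Omega^{\bullet}_{\ms X})$ recalled in Section~\ref{sec:prelim}, where $B = \{\infty\}$ and $P$ is the unique point of $\ms X$ above $\infty$; thus a class is represented by a pair $(\omega, f)$ with $\omega \in \Omega_{\ms X}(V)$, $f \in k(\ms X)$ and $\omega - df \in \Omega_{\ms X, P}$, two such pairs being identified modulo $B^1_B(\pi_*\Omega^{\bullet}_{\ms X})$. The one elementary fact I would isolate first is that any pair $(0, g)$ with $g \in k(\ms X)$ and $\ord_P(g) \ge 0$ is a coboundary — take the zero function on $V$ and $h_{\infty} = g$ — hence vanishes in $H^1_{dR}(\ms X)$. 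Since $\sigma$ and $\tau$ are automorphisms of $\ms X$ over $\PP^1$, they fix $\infty$ and $P$, preserve $Z^1_B(\pi_*\Omega^{\bullet}_{\ms X})$ and $B^1_B(\pi_*\Omega^{\bullet}_{\ms X})$, fix $x$ and therefore $h_0, h_1, h_0', h_1', \alpha$, and affect the listed generators only through $\sigma(y_0) = y_0 + 1$, $\sigma(y_1) = y_1$ and $\tau(y_0) = y_0$, $\tau(y_1) = y_1 + 1$.

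With this in hand I would treat all sixteen displayed relations uniformly: substitute these formulas into each generator, expand, and simplify. After cancellation the difference between the $\sigma$- or $\tau$-image of a generator and the value claimed in the statement is either literally another of the listed generators — for instance $\sigma(g_i) - g_i = e_i$ on the nose, and $\sigma(v_i) - v_i$ agrees with $u_i$ except in its second coordinate — or else a pair $(0, g)$, and then it remains only to check $\ord_P(g) \ge 0$. Such checks are routine applications of \eqref{eqn:valuation_of_diff_form}, the orders listed just before the lemma ($\ord_P(dx) = m + 2M - 5$, $\ord_P(h_0) = -4m$, $\ord_P(h_1) = -4M$, $\ord_P(y_0) = -2m$, $\ord_P(y_1) = -2M$), the additional orders $\ord_P(h_0') = -4(m-1)$, $\ord_P(h_1') = -4(M-1)$, $\ord_P(\alpha) = -4M''$ and $\ord_P(h_1' + \alpha h_0') = -4(M-1)$ (the last following from the standard form and the choice of $\alpha$, and already used implicitly in the preceding lemma), and the holomorphy statement \eqref{eqn:P<0_is_holo}. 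For example $\sigma(c_i) - c_i - a_i = (0,\ y_1 x^i/(h_1'+\alpha h_0'))$ with $\ord_P(y_1 x^i/(h_1'+\alpha h_0')) = 2M - 4i - 4$, positive for $i \in I_1$; $\tau(g_i) - g_i - f_i = (0,\ i\,\alpha h_0/(h_1' x^{i+1}))$ with the second entry of order $2(M-m) + 4i$, positive for $i \in I_2$; and, after writing $x^i[h_1'/x^i]_{\ge 0} = h_1' - x^i[h_1'/x^i]_{<0}$, the combination $\sigma(v_i) - v_i - u_i$ reduces to $(0, g)$ with $g$ built from $y_1[h_1'/x^i]_{<0}/(h_1'+\alpha h_0')$ and $y_1\alpha h_0'/(x^i(h_1'+\alpha h_0'))$, both of positive order at $P$ when $i \in I_3$. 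In each case the exact endpoints of $I_1$, $I_2$ or $I_3$ are precisely what make the relevant inequality strict.

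I expect no conceptual obstacle; the entire content is bookkeeping — correctly expanding $\sigma$ and $\tau$ on each generator, tracking the cancellations, and verifying one order estimate at $P$ per relation. The only point needing a little care is the degenerate case $m = M$ (that is, $\infty \in B'(\VV_4)$), in which $\alpha$ is a constant, $I_3 = \varnothing$, and the generators indexed by $I_3$ simply do not occur; but all the formulas above are uniform in $m$ and $M$, so the argument goes through verbatim. Since each asserted identity will thereby be exhibited as an equality of cocycles modulo a coboundary, it will hold in $H^1_{dR}(\ms X)$, as claimed.
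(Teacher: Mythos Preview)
Your proposal is correct and follows essentially the same approach as the paper's proof: compute each difference $\sigma(\cdot)-\cdot$ or $\tau(\cdot)-\cdot$ as a cocycle, subtract the claimed target, obtain a pair of the form $(0,g)$, and verify $\ord_P(g)\ge 0$ so that it lies in $B^1_B(\pi_*\Omega^{\bullet}_{\ms X})$. Your sample computations match the paper's (your order $2(M-m)+4i$ for the $g_i$ check is in fact correct; the paper's $2(M-m)+4i+4$ is a harmless slip), and your observation that some identities such as $\sigma(g_i)-g_i=e_i$ hold on the nose is accurate.
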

\begin{proof}
	It is straightforward that $a_i$ is $\VV_4$-invariant for $i \in I_1$. Note now that
	for $i \in I_1$:
	\begin{align*}
		\sigma(b_i) - b_i = \left(0, \frac{y_1 x^i \alpha}{h_1' + \alpha h_0'} \right).
	\end{align*}
	Moreover:
	\begin{align*}
		\ord_P\left(\frac{y_1 x^i \alpha}{h_1' + \alpha h_0'}\right) &= 2m - 4i - 4 \ge 0.
	\end{align*}
	Hence $\sigma(b_i) = b_i$ in $H^1_{dR}(\ms X)$. Similarly:
	\begin{itemize}[leftmargin=*]
		\item $\tau(b_i) = b_i + a_i$ for $i \in I_1$, since:
		\begin{align*}
			\ord_P\left(\frac{y_0 x^i \alpha}{h_1' + \alpha h_0'}\right) &= 2M - 4i - 4 \ge 0.
		\end{align*}
		\item $\sigma(c_i) = c_i + a_i$, $\tau(c_i) = c_i$ for $i \in I_1$, since:
		\begin{align*}
			\ord_P\left(\frac{y_1 x^i}{h_1' + \alpha h_0'}\right) &= 2M - 4i - 4 \ge 0,\\
			\ord_P\left(\frac{y_0 x^i}{h_1' + \alpha h_0'}\right) &= 4M - 2m - 4i - 4 \ge 0,
		\end{align*}
		\item $\sigma(e_i) = \tau(e_i) = e_i$, $\sigma(f_i) = \tau(f_i) = f_i$ for $i \in I_2$
		and $\sigma(u_i) = \tau(u_i) = u_i$ for $i \in I_3$, since $1/x^i \in \mc O_{\ms X, P}$,
				
		\item $\sigma(g_i) = g_i + e_i$, $\tau(g_i) = g_i + f_i$ for $i \in I_2$, since:
		\[
		\ord_P\left(\frac{\alpha h_0}{h_1' \cdot x^{i+1}}\right) = 2 (M-m) + 4i \ge 0,
		\]
		\item $\sigma(v_i) = v_i + u_i$, $\tau(v_i) = v_i$ for $i \in I_3$, since:
		\begin{align*}
			\ord_P\left(\frac{y_1}{h_1' + \alpha h_0'}
			\left[ \frac{h_1'}{x^i} \right]_{\ge 0} - \frac{y_1}{x^i}\right) &\ge
			0,\\
			\ord_P\left(\frac{y_0}{h_1' + \alpha h_0'}
			\left[ \frac{h_1'}{x^i} \right]_{\ge 0}\right) &= 4M - 2m - 4i - 4 \ge 0.
		\end{align*}
		For the first inequality, note that:
		\begin{align*}
			\frac{y_1}{h_1' + \alpha h_0'}
			\left[ \frac{h_1'}{x^i} \right]_{\ge 0} - \frac{y_1}{x^i}
			= \frac{y_1 \left[ h_1'/x^i \right]_{<0}}{h_1' + \alpha h_0'} +  \frac{y_1 \alpha h_0'}{(h_1' + \alpha h_0') \cdot x^i}.
		\end{align*}
		The first summand has valuation at least $2M$
		at $P$, the second $4i - 2m \ge 0$. \qedhere
	\end{itemize}
\end{proof}
In the sequel we use the following basis of the holomorphic differentials on $\ms X$.
\begin{Theorem} \label{thm:basis_H0_Omega_HKG}
	The basis of $H^0(\ms X, \Omega_\ms X)$ is given by the differential forms:
	\begin{itemize}
		\item $x^i \, dx$ for $i = 0, \ldots, M - M_0 - 2$,
		\item $y_0 x^i \, dx$ for $i = 0, \ldots, M_0 - 1$,
		\item $(y_1 + \alpha y_0) \cdot x^i \, dx$ for $i = 0, \ldots, m' - 1$.
	\end{itemize}
\end{Theorem}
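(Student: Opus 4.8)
The plan is to produce exactly $g_{\ms X}$ linearly independent global differentials among the listed ones and to conclude from $\dim_k H^0(\ms X, \Omega_{\ms X}) = g_{\ms X}$ that they form a basis. First I would count: the three families contribute $M - M_0 - 1$, $M_0$ and $m'$ forms respectively, for a total of $M - 1 + m' = M - 1 + \tfrac{m-1}{2} = \tfrac12(m + 2M - 3)$, which is precisely $g_{\ms X} = \tfrac12 d''_{\infty}$ as recorded above. (In the degenerate cases $m = 1$ or $M_0 \le 0$ some families are empty; since $m \le M$ one checks $M_0 \ge 0$ and $M - M_0 - 2 \ge -1$, so the list always makes sense.)

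Next I would verify linear independence. Since $k(\ms X)$ is free over $k(x)$ with basis $1, y_0, y_1, y_0 y_1$, the elements $1, y_0, y_1$ are linearly independent over $k(x)$. Dividing a hypothetical vanishing $k$-linear combination of the listed forms by $dx$ yields an identity $A(x) + B(x)\,y_0 + C(x)\,y_1 = 0$ in $k(\ms X)$ with $A, B, C \in k[x]$: here $C$ receives only the $y_1 x^i$-part of the third family, while $B$ receives the $y_0 x^i$-part of the second family together with the $\alpha x^i$-part of the third. Linear independence over $k(x)$ forces $C = 0$, then $B = 0$, then $A = 0$, after which comparing coefficients of powers of $x$ annihilates all the scalars.

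Finally, holomorphy. Because the HKG-cover is branched only over $\infty$ with $G_\infty = \VV_4$, the fibre over $\infty$ is a single point $P$; on $V = \pi^{-1}(\PP^1 \setminus \{\infty\})$ the functions $x, y_0, y_1, \alpha$ are regular (the $y_i$ satisfy monic equations over $k[x]$) and $dx$ is a regular differential there, so every listed form lies in $\Omega_{\ms X}(V)$ and it suffices to check regularity at $P$. From $\ord_P(dx) = m + 2M - 5$, $\ord_P(x) = -4$, $\ord_P(y_0) = -2m$ and $\ord_P(y_1 + \alpha y_0) = -2M + m$ one computes $\ord_P(x^i\,dx) = m + 2M - 5 - 4i$, $\ord_P(y_0 x^i\,dx) = 2M - m - 5 - 4i$ and $\ord_P((y_1 + \alpha y_0)x^i\,dx) = 2m - 5 - 4i$. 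These are nonnegative over the prescribed ranges of $i$ exactly because $4 M_0 \le 2M - m - 1$ (immediate from $M_0 = \lfloor \tfrac{2M - m - 1}{4}\rfloor$) and $4 M_0 \ge 2M - m - 3$ (valid since $2M - m - 1$ is even, $m$ being odd), the smallest value, occurring in the third family at $i = m' - 1$, being $1$. Thus all the listed forms are holomorphic, and being $g_{\ms X}$ in number and linearly independent, they constitute a basis of $H^0(\ms X, \Omega_{\ms X})$.

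The point that needs care is the calibration of $M_0 = \lfloor \tfrac{2M - m - 1}{4}\rfloor$: it is the largest integer keeping $y_0 x^i\,dx$ holomorphic through $i = M_0 - 1$, and at the same time small enough that the complementary family $x^i\,dx$ reaches $i = M - M_0 - 2$ while the two counts $M - M_0 - 1$ and $M_0$ add up to the part of the genus forced by the computation. Everything else is routine bookkeeping with the valuation formulas recorded above, together with the Wu--Scheidler description of $\alpha$.
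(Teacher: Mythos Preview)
Your proof is correct and takes a more direct route than the paper's. The paper simply invokes \cite[Lemma~3.6]{Bleher_Camacho_Holomorphic_differentials}, translates their notation into the present one, and performs an elementary change of basis on the third family $\mc B_{\infty,3}$ to replace the piecewise-defined elements $w_{\infty}(j)\cdot x^{i_3}$ by the uniform $(y_1+\alpha y_0)\cdot x^i$. In other words, the paper outsources the real content to an existing computation.

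Your argument is self-contained: you count that the listed forms number exactly $g_{\ms X}$, check linear independence via the $k(x)$-basis $1,y_0,y_1,y_0y_1$ of $k(\ms X)$, and verify holomorphy by computing $\ord_P$ directly from the valuation data $\ord_P(dx)=m+2M-5$, $\ord_P(x)=-4$, $\ord_P(y_0)=-2m$, $\ord_P(y_1+\alpha y_0)=-2M+m$ already recorded in the paper. The calibration of $M_0$ via $4M_0\le 2M-m-1$ and $4M_0\ge 2M-m-3$ (using that $2M-m-1$ is even since $m$ is odd) is exactly the point that makes the first two ranges fit together. One small wording quibble: the minimal valuation over all three families need not be the value~$1$ attained in the third family (the first or second family can hit~$0$), but this does not affect the argument since you only need each family to stay nonnegative over its own range. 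The advantage of your approach is that it is elementary and independent of \cite{Bleher_Camacho_Holomorphic_differentials}; the paper's approach has the virtue of brevity once that reference is granted.
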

\begin{proof}
	This is basically \cite[Lemma~3.6]{Bleher_Camacho_Holomorphic_differentials}. The cited article uses the following notation:
	$u = u_{\infty} := y_0$, $v = v_{\infty} := y_1$, $p := h_0$, $q := h_1$, $\pi_{\infty} := \frac{1}{x}$, $w_{\infty}(j) := y_1 + [\alpha]_{\ge M'' - j} \cdot y_0$ for $j \in \ZZ$, where $[\alpha]_{\ge t} := \sum_{i = t}^{M''} \alpha_i x^i$ for any $t \in \ZZ$. Thus by \cite[Lemma~3.6]{Bleher_Camacho_Holomorphic_differentials} the basis of $\ms X$ is given by $\{ f \, dx : f \in \mc B \}$,
	where:
	\begin{align*}
		\mc B = \mc B_{\infty} &= \mc B_{\infty, 1} \cup \mc B_{\infty, 2}
		\cup \mc B_{\infty, 3},\\
		\mc B_{\infty, 1} &= 
		\left\{ x^{i_1} : 0 \le i_1 \le M - M_0 - 2 \right\},\\
		\mc B_{\infty, 2} &= \left\{ y_0 \cdot x^{i_2} : 0 \le i_2 \le M_0 - 1 \right\}, \\
		\mc B_{\infty, 3} &= \left\{ y_1 \cdot x^{i_3} : 0 \le i_3 \le \left\lfloor \frac{m - 5}{4} \right\rfloor  \right\}\\
		&\cup \left\{ w_{\infty}\left(i_3 - \left\lfloor \frac{m - 1}{4} \right\rfloor \right) \cdot x^{i_3} : \left\lfloor \frac{m - 1}{4} \right\rfloor
		\le i_3 \le m' - 1 \right\}.
	\end{align*}
	By adding the elements of $\mc B_{\infty, 2}$ to the elements of $\mc B_{\infty, 3}$, one may replace $\mc B_{\infty, 3}$ by
	\[
	\{ (y_1 + \alpha y_0) \cdot x^i : 0 \le i \le m' - 1 \}.
	\]
	This ends the proof.
\end{proof}
\noindent Observe that by~\eqref{eqn:residue_and_trace}:
\begin{align}
	\res_P\left(\frac{y_0 y_1}{x^i} \, dx\right) =
	\begin{cases}
		1, & i = 1,\\
		0, & \textrm{otherwise,}	
	\end{cases} \label{eqn:residues_z0_z1}\\
	\res_P\left(\frac{y_0^2}{x^i}  \, dx\right) = \res_P\left(\frac{y_1^2}{x^i}  \, dx\right) = \res_P\left(\frac{dx}{x^i}\right) = 0
	\label{eqn:residues_z0_z1_2}
\end{align}
for any $i \in \ZZ$. We recall also that the Serre duality is given by the pairing:
\begin{align*}
	H^1(\ms X, \mc O_{\ms X}) \times H^0(\ms X, \Omega_\ms X) \to k,\\
	\langle f, \omega \rangle := \res_P(f \cdot \omega).
\end{align*}
In order to prove that the elements listed above yield a basis of $H^1_{dR}(\ms X)$,
we start by establishing the linear independence of some of them. The idea is to use the
Hodge--de Rham exact sequence:
\[
	0 \to H^0(\ms X, \Omega_{\ms X}) \to H^1_{dR}(\ms X) \to H^1(\ms X, \mc O_{\ms X}) \to 0.
\]
First, we investigate the dependence of the images of the elements in $H^1(\ms X, \mc O_{\ms X})$
and use Serre duality along with Theorem~\ref{thm:basis_H0_Omega_HKG}. Then we are left with a dependence in
$H^0(\ms X, \Omega_{\ms X})$ and we may use Theorem~\ref{thm:basis_H0_Omega_HKG}
for the second time.
\begin{Lemma} \label{lem:afeu_linearly_independent}
	The elements $(a_i)_{i \in I_1}$, $(e_i)_{i \in I_2}$, $(f_i)_{i \in I_2}$, $(u_i)_{i \in I_3}$ are linearly independent in $H^1_{dR}(\ms X)$.
\end{Lemma}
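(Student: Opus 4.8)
The statement asks for linear independence in $H^1_{dR}(\ms X)$ of the families $(a_i)_{i \in I_1}$, $(e_i)_{i \in I_2}$, $(f_i)_{i \in I_2}$, $(u_i)_{i \in I_3}$. The strategy, as flagged in the paragraph preceding the lemma, is to push the dependence relation through the Hodge--de~Rham exact sequence
\[
	0 \to H^0(\ms X, \Omega_{\ms X}) \to H^1_{dR}(\ms X) \to H^1(\ms X, \mc O_{\ms X}) \to 0.
\]
The second coordinates of $e_i$, $f_i$, $u_i$ are the functions $y_1/x^i$, $y_0/x^i$, $y_1/x^i$ respectively, while $a_i$ has zero second coordinate; their images in $H^1(\ms X, \mc O_{\ms X})$ are the classes of these functions. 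So I would first suppose a relation $\sum_i \lambda_i a_i + \sum_i \mu_i e_i + \sum_i \nu_i f_i + \sum_i \rho_i u_i = 0$ in $H^1_{dR}(\ms X)$, project to $H^1(\ms X, \mc O_{\ms X})$, and show that all the $\mu_i$, $\nu_i$, $\rho_i$ vanish; then the relation collapses to $\sum_i \lambda_i\, x^i\, dx = 0$ in $H^0(\ms X, \Omega_{\ms X})$, and Theorem~\ref{thm:basis_H0_Omega_HKG} (the differentials $x^i\, dx$, $i = 0, \ldots, M - M_0 - 2$, are part of a basis, and the index set $I_1 = \{0,\ldots,m'-1\}$ lies inside that range since $m' - 1 \le M - M_0 - 2$ for $m \le M$) forces $\lambda_i = 0$.

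\textbf{The cohomology-class step.} The heart of the argument is showing that the classes of $y_1/x^i$ ($i \in I_2 \cup I_3$) and $y_0/x^i$ ($i \in I_2$) in $H^1(\ms X, \mc O_{\ms X})$ are linearly independent. For this I would use Serre duality with the basis of $H^0(\ms X, \Omega_{\ms X})$ from Theorem~\ref{thm:basis_H0_Omega_HKG} and the residue formulas~\eqref{eqn:residues_z0_z1}--\eqref{eqn:residues_z0_z1_2}. Concretely, pair a putative relation $\sum \mu_i\, y_1/x^i + \sum \nu_i\, y_0/x^i + \sum \rho_i\, y_1/x^i = 0$ against the basis differentials. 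Pairing against $y_0 x^j\, dx$ kills the $y_0/x^i$ terms (they give $\res_P(y_0^2 x^{j-i}\,dx) = 0$ by~\eqref{eqn:residues_z0_z1_2}) and against the $y_1/x^i$ terms yields $\res_P(y_0 y_1 x^{j-i}\,dx)$, which by~\eqref{eqn:residues_z0_z1} is $1$ exactly when $j - i = -1$, i.e. $i = j+1$; as $j$ ranges over $0,\ldots,M_0-1$ this detects each coefficient $\mu_i + \rho_i$ (combining the two $y_1$-families) for $i = 1,\ldots,M_0$. Symmetrically, pairing against $(y_1+\alpha y_0)x^j\,dx$ isolates the $y_0/x^i$ coefficients $\nu_i$: the $\alpha y_0 \cdot y_0/x^i$ contribution has zero residue, the $y_1 \cdot y_1/x^i$ contribution has zero residue, and $y_1 \cdot y_0/x^i$ contributes via~\eqref{eqn:residues_z0_z1}. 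One has to be a little careful because $I_2 = \{1,\ldots,m'\}$ and $I_3 = \{m'+1,\ldots,M-1-m'\}$ partition an interval, and the two $y_1$-families $e_i$ and $u_i$ never share an index, so the combined coefficient of $y_1/x^i$ is unambiguous; similarly the basis ranges ($M_0 - 1$, $m'-1$) need to be checked to cover the relevant indices, which follows from the arithmetic relations among $m$, $M$, $M_0$, $m'$ recorded before the lemma.

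\textbf{Anticipated obstacle.} The main delicacy is bookkeeping with the index ranges: one must verify that Serre duality against the explicit basis of Theorem~\ref{thm:basis_H0_Omega_HKG} actually sees \emph{all} the coefficients $\mu_i,\nu_i,\rho_i$ occurring, i.e. that the shifts $i = j+1$ land the indices $i \in I_2 \cup I_3$ within the ranges $0 \le j \le M_0 - 1$ and $0 \le j \le m'-1$ permitted by the basis. If some index is not directly detected this way, one falls back on the fact that $H^1(\ms X,\mc O_{\ms X})$ has dimension $g_{\ms X} = \tfrac12(m+2M-3)$ and the proposed family has the right cardinality, so linear independence and spanning are equivalent — but cleanly, the residue pairing should suffice. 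A secondary point is to confirm that the reductions mod holomorphic functions (resp. mod $\Omega_{\ms X,P}$) used when passing to $H^1(\ms X,\mc O_{\ms X})$ and to $H^0(\ms X,\Omega_{\ms X})$ are legitimate, which is exactly the content of the previous lemma showing these pairs lie in $Z^1_B(\pi_*\Omega^{\bullet}_{\ms X})$; no new verification is needed there. With these checks in place the two applications of Theorem~\ref{thm:basis_H0_Omega_HKG} finish the proof.
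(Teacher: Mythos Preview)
Your overall strategy---project to $H^1(\ms X,\mc O_{\ms X})$, use Serre duality against the basis of Theorem~\ref{thm:basis_H0_Omega_HKG}, then finish in $H^0(\ms X,\Omega_{\ms X})$---matches the paper's, but the ``cohomology-class step'' has a genuine gap.

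First, your claim that pairing against $(y_1+\alpha y_0)x^j\,dx$ isolates the $\nu_i$ is incorrect: you omitted the cross term $\alpha y_0 \cdot (y_1/x^i)$. Writing $\alpha = \sum_l \alpha_l x^l$, this term is $\sum_l \alpha_l\, y_0 y_1\, x^{l+j-i}\,dx$, whose residue is $\alpha_{i-j-1}$ by~\eqref{eqn:residues_z0_z1}. Hence pairing with $(y_1+\alpha y_0)x^{j-1}\,dx$ yields $F_j + \sum_l \alpha_l E_{j+l}$ (combining the $\mu$- and $\rho$-coefficients into a single family $E$), not $F_j$ alone. Second, and more seriously, the conclusion you are aiming for---that the classes of $y_1/x^i$ ($i\in I_2\cup I_3$) and $y_0/x^i$ ($i\in I_2$) are linearly independent in $H^1(\ms X,\mc O_{\ms X})$---is actually \emph{false} in general. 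For $i>M_0$ one has $\ord_P\bigl((y_1+\alpha y_0)/x^i\bigr)\ge 0$, so in $H^1(\ms X,\mc O_{\ms X})$ the class of $y_1/x^i$ equals a $k$-combination of classes $y_0/x^{i-l}$. Thus the images of the $e_i,f_i,u_i$ \emph{do} satisfy nontrivial relations, and no amount of index bookkeeping will show independence in $H^1$. Your fallback cardinality argument also fails: the family has $|I_2|+|I_2|+|I_3| = 2m' + (M-m) = M-1$ elements, whereas $g_{\ms X} = (m+2M-3)/2$; these agree only when $m=1$.

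What the paper does instead is accept the coupled relations $E_j=0$ for $j\le M_0$ and $F_j=\sum_l \alpha_l E_{j+l}$ coming from Serre duality, then \emph{substitute these back} into the original $H^1_{dR}$ relation. Using that $(y_1+\alpha y_0)/x^i\in\mc O_{\ms X,P}$ for $i>M_0$, one rewrites $\sum F_i f_i$ (modulo exact classes) so that its second coordinate matches that of $-\sum E_i e_i$; the resulting relation then lives entirely in $H^0(\ms X,\Omega_{\ms X})$, where the leading terms $x^{M-1-i}\,dx$ of $[h_1'/x^i]_{\ge 0}\,dx$ force $E_i=0$ via Theorem~\ref{thm:basis_H0_Omega_HKG}. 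The essential point you are missing is that independence in $H^1_{dR}$ here is \emph{not} a consequence of independence in $H^1(\mc O)$---one must genuinely use both maps in the Hodge--de~Rham sequence, not just one followed by the other.
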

\begin{proof}
	We extend the definition of $a_i$ and $e_i$ as follows.
	Denote $a_i = (x^i \, dx, 0)$ for $i = m', \ldots, M' - 1$ and
	 $e_i := u_i$ for $i = m'+1, \ldots, M'$. Note that for $i > M'$:
	\[
		u_i = \left(\left[ \frac{h_1'}{x^i} \right]_{\ge 0} \, dx, \frac{y_1}{x^i} \right) = \left(\left[ \frac{h_1'}{x^i} \right]_{\ge 0} \, dx, 0 \right)
		\quad \textrm{ in } H^1_{dR}(\ms X).
	\]
	Moreover, $\left[ \frac{h_1'}{x^i} \right]_{\ge 0}$ is a polynomial of degree $M - 1 - i$.
	Hence:
	\[
		u_i \in \Span_k(a_j : j \le M-1-i) \setminus \Span_k(a_j : j < M-1-i).
	\]
	Therefore it suffices to show that the elements $(a_i)_{i = 0}^{M' - 1}$, $(e_i)_{i = 1}^{M'}$, $(f_i)_{i = 1}^{m'}$ are linearly independent
	in $H^1_{dR}(\ms X)$. Suppose that for some $A_i, F_i, E_i \in k$ we have:
	\begin{equation} \label{eqn:dependence_in_HdR}
		\sum_{i = 0}^{M' - 1} A_i \cdot a_i + \sum_{i = 1}^{M'} E_i \cdot e_i + \sum_{i = 1}^{m'} F_i \cdot f_i = 0
	\end{equation}
	in $H^1_{dR}(\ms X)$. Then in $H^1(\ms X, \mc O_{\ms X})$:
	\begin{equation} \label{eqn:dependence_in_H1OX}
		\sum_{i = 1}^{M'} E_i \cdot \frac{y_1}{x^i} + \sum_{i = 1}^{m'} F_i \cdot \frac{y_0}{x^i} = 0.
	\end{equation}
	We divide the proof into four steps.\\ \mbox{} \\
	\bb{Step I:} For $j = 1, \ldots, M_0$ we have $E_j = 0$
	and for every $j = 1, \ldots, m'$:
	\begin{equation} \label{eqn:gamma=alpha_rho}
		F_j = \sum_{l = 0}^{M''} \alpha_l \cdot E_{j + l}.
	\end{equation}
	\bb{Proof of Step I:} Using Theorem~\ref{thm:basis_H0_Omega_HKG}, \eqref{eqn:dependence_in_H1OX} and relations~\eqref{eqn:residues_z0_z1}, \eqref{eqn:residues_z0_z1_2} we obtain for any $j \in \{ 1, \ldots, M_0 \}$:
	\begin{align*}
		0 = \left\langle \sum_{i = 1}^{M'} E_i \cdot \frac{y_1}{x^i} + \sum_{i = 1}^{m'} F_i \cdot \frac{y_0}{x^i}, \quad y_0 \cdot x^{j - 1} \, dx \right\rangle = E_j.
	\end{align*}
	Analogously, for any $j = 1, \ldots, m'$:
	\begin{align*}
		0 &= \left\langle \sum_{i = 1}^{M'} E_i \cdot \frac{y_1}{x^i} + \sum_{i = 1}^{m'} F_i \cdot \frac{y_0}{x^i}, \quad
		(y_1 + \alpha y_0) \cdot x^{j - 1} \, dx \right\rangle\\
		&= \sum_{l = 0}^{M''} \alpha_l \cdot E_{j + l} + F_j.
	\end{align*}
	This shows that~\eqref{eqn:gamma=alpha_rho} is true.\\ \mbox{} \\
	\bb{Step II:} For any $i = 1, \ldots, M'$:
	\begin{equation} \label{eqn:combination_in_OXP}
		E_i \cdot \sum_{l = i_0(i)}^{i - 1} \alpha_l \cdot \frac{y_0}{x^{i - l}}
		\equiv E_i \cdot \frac{y_1}{x^i} + y_0 \cdot P_i(x) \pmod{\mc O_{\ms X, P}},
	\end{equation}
	where $i_0(i) := \max(0, i - m')$, $P_i \in k[x]$, $\deg P_i \le M'' - i$.\\
	\bb{Proof of Step II:} Take $P_i(x) := E_i \cdot \sum_{l = i}^{M''} \alpha_l \cdot x^{l - i}$. 
	For $i \le M_0$ the equality~\eqref{eqn:combination_in_OXP} is immediate, since $E_i = 0$
	by Step~I. If $i > M_0$, then:
	\begin{align*}
		E_i \cdot \sum_{l = i_0(i)}^{i - 1} \alpha_l \cdot \frac{y_0}{x^{i - l}} + E_i \cdot \frac{y_1}{x^i} + y_0 \cdot P_i(x)
		= E_i \cdot \left(\frac{y_1 + \alpha y_0}{x^i} + \sum_{l = 0}^{i_0(i) - 1} \alpha_l \cdot \frac{y_0}{x^{i - l}} \right).
	\end{align*}
	But $\ord_P(\frac{y_1 + \alpha y_0}{x^i}) \ge 4 (M_0 + 1) -2M + m \ge 0$. Moreover, for $0 \le l \le i_0(i) - 1$ one has $i - l > m'$, which implies that $y_0/x^{i - l} \in \mc O_{\ms X, P}$.
	This proves~\eqref{eqn:combination_in_OXP}.\\  \mbox{} \\
	\bb{Step III:} There exist polynomials $S_i, T_i \in k[x]$,
	$\deg T_i \le M - i$ such that in $H^1_{dR}(\ms X)$:
	\begin{equation*}
		\sum_{i = 1}^{m'} F_i \cdot f_i
		= \sum_{i = 1}^{M'} E_i \cdot \left(y_0 S_i(x) \, dx
		+ T_i(x) \, dx, \frac{y_1}{x^i} \right).
	\end{equation*}
	\bb{Proof of Step III:} Using~\eqref{eqn:gamma=alpha_rho}:
	\begin{align*}
		\sum_{i = 1}^{m'} F_i \cdot f_i
		&= \sum_{i = 1}^{m'} \sum_{l = 0}^{M''} \alpha_l E_{l + i} \cdot \left([h_0'/x^i]_{\ge 0} \, dx, \frac{y_0}{x^i}\right).
	\end{align*}
	Thus substituting $t := l+i$:
	\begin{align*}
		\sum_{i = 1}^{m'} F_i \cdot f_i
		&= \sum_{l = 0}^{M''} \sum_{t = l + 1}^{l + m'} \alpha_l E_t \cdot \left([h_0'/x^{t-l}]_{\ge 0} \, dx, y_0/x^{t - l} \right) \\
		&= \sum_{t = 1}^{m' + M''} \sum_{l = i_0(t)}^{t - 1} \alpha_l E_t \cdot \left([h_0'/x^{t-l}]_{\ge 0} \, dx, \frac{y_0}{x^{t - l}} \right)\\
		&= \sum_{t = 1}^{M'} E_t \cdot \left(Q_t(x) \, dx, \sum_{l = i_0(t)}^{t - 1} \alpha_l \frac{y_0}{x^{t - l}} \right),
	\end{align*}
	where $Q_t(x) := \sum_{l = i_0(t)}^{t - 1} \alpha_l [h_0'/x^{t-l}]_{\ge 0} \in k[x]$. Moreover,
	since $\alpha_i = 0$ for $i > M''$, we have $\deg Q_t \le \deg [h_0'/x^{t-M''}]_{\ge 0} = m + M'' - 1 - t$.
	By~\eqref{eqn:combination_in_OXP}:
	\begin{align*}
		\sum_{i = 1}^{m'} F_i \cdot f_i
		&= \sum_{i = 1}^{M'} E_i \cdot \left(Q_i(x) \, dx, \frac{y_1}{x^i} + y_0 \cdot P_i(x) \right)\\
		&= \sum_{i = 1}^{M'} E_i \cdot \left(Q_i(x) \, dx + d(y_0 \cdot P_i(x)), \frac{y_1}{x^i} \right)\\
		&= \sum_{i = 1}^{M'} E_i \cdot \left(y_0 S_i(x) \, dx
		+ T_i(x) \, dx, \frac{y_1}{x^i} \right),
	\end{align*}
	where $S_i(x) := P_i'(x)$, $T_i(x) := Q_i(x) + h_0'(x) \cdot P_i(x)$, $\deg T_i \le m + M'' -1 - i < M - i$.\\ \mbox{} \\
	\bb{Step IV:} By applying the equality from Step III
	to~\eqref{eqn:dependence_in_HdR} we obtain:
	\begin{align*}
		0&= \sum_{i = 0}^{M' - 1} A_i \cdot (x^i \, dx, 0) + \sum_{i = 1}^{M'} E_i \cdot ([h_1'/x^i]_{\ge 0} \, dx + y_0 S_i(x) \, dx
		+ T_i(x) \, dx, 0).
	\end{align*}
	In other words:
	\[
		\sum_{i = 0}^{M' - 1} A_i x^i \, dx + \sum_{i = 1}^{M'} E_i \cdot ([h_1'/x^i]_{\ge 0} \, dx + y_0 S_i(x) \, dx
		+ T_i(x) \, dx) = 0
		\qquad \textrm{ in } H^0(\ms X, \Omega_\ms X).
	\]
	Note that the leading term of $[h_1'/x^i]_{\ge 0} \, dx$ is $x^{M - 1 - i} \, dx$.
	But by Theorem~\ref{thm:basis_H0_Omega_HKG}, for any $i \in \{ 1, \ldots, M' \}$, the form
	$x^{M - 1 - i} \, dx$ is not in the $k$-linear span of the forms $x^j \, dx$ for $j < M - 1 - i$
	and $y_0 x^l \, dx$ for $l = 0 , \ldots, M_0 - 1$. Thus $E_i = 0$ for any $i$. Analogously,
	Theorem~\ref{thm:basis_H0_Omega_HKG} implies that $A_0 = \ldots = A_{M' - 1} = 0$.
\end{proof}

\begin{Corollary} \label{cor:basis_of_HKG_dR}
	The images of the elements $(a_i)_{i \in I_1}$, $(b_i)_{i \in I_1}$, $(c_i)_{i \in I_1}$, $(e_i)_{i \in I_2}$, $(f_i)_{i \in I_2}$, $(g_i)_{i \in I_2}$, $(u_i)_{i \in I_3}$, $(v_i)_{i \in I_3}$
	form a basis of $H^1_{dR}(\ms X)$.
\end{Corollary}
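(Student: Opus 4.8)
The plan is to combine a dimension count with the equivariance relations already established, so that no new computation is needed. First I would verify that the number of listed elements equals $\dim_k H^1_{dR}(\ms X) = 2 g_{\ms X} = m + 2M - 3$: indeed $|I_1| = |I_2| = m' = \frac{m-1}{2}$ and $|I_3| = M - 1 - 2m' = M - m$, so the total count is $3m' + 3m' + 2(M-m) = 3(m-1) + 2(M-m) = m + 2M - 3$, which matches $2 g_{\ms X}$ since $g_{\ms X} = \frac 12 d_\infty'' = \frac 12 (m + 2M - 3)$. The preceding lemma shows that all these elements lie in $Z^1_B(\pi_* \Omega^{\bullet}_{\ms X})$, so it remains only to prove that their classes in $H^1_{dR}(\ms X)$ are linearly independent.

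For linear independence I would start from a hypothetical relation
\[
	\sum_{i \in I_1}(A_i a_i + B_i b_i + C_i c_i) + \sum_{i \in I_2}(E_i e_i + F_i f_i + G_i g_i) + \sum_{i \in I_3}(U_i u_i + V_i v_i) = 0
\]
in $H^1_{dR}(\ms X)$ with all coefficients in $k$, and then apply the $k$-linear operators $\sigma - 1$ and $\tau - 1$. By Lemma~\ref{lem:automorphisms_on_abc_efg_uv} the operator $\sigma - 1$ annihilates every generator except $c_i$, $g_i$, $v_i$, which it sends to $a_i$, $e_i$, $u_i$ respectively; hence $\sum_{i \in I_1} C_i a_i + \sum_{i \in I_2} G_i e_i + \sum_{i \in I_3} V_i u_i = 0$, and Lemma~\ref{lem:afeu_linearly_independent} (which in particular gives the independence of $(a_i)_{i \in I_1}$, $(e_i)_{i \in I_2}$, $(u_i)_{i \in I_3}$) forces $C_i = G_i = V_i = 0$. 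Applying $\tau - 1$ to the original relation and using Lemma~\ref{lem:automorphisms_on_abc_efg_uv} again leaves $\sum_{i \in I_1} B_i a_i + \sum_{i \in I_2} G_i f_i = 0$; since $G_i = 0$ and the $a_i$ are independent, $B_i = 0$. The relation then collapses to $\sum_{i \in I_1} A_i a_i + \sum_{i \in I_2}(E_i e_i + F_i f_i) + \sum_{i \in I_3} U_i u_i = 0$, which is exactly the situation of Lemma~\ref{lem:afeu_linearly_independent}, so all remaining coefficients vanish; combined with the dimension count, this proves that the listed classes form a basis.

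I do not expect a genuine obstacle at this stage: the entire content sits in the two preceding lemmas. The real difficulty was Lemma~\ref{lem:afeu_linearly_independent}, whose four-step proof has to control the interplay between $H^0(\ms X, \Omega_{\ms X})$ and $H^1(\ms X, \mc O_{\ms X})$ through the Hodge--de~Rham sequence, Serre duality, and the explicit basis of Theorem~\ref{thm:basis_H0_Omega_HKG}. Given that lemma and the equivariance identities, the Corollary is essentially formal, amounting to the observation that $\sigma - 1$ and $\tau - 1$ successively extract the coefficients of $c_i, g_i, v_i$ and then of $b_i$; the only care needed is the bookkeeping of the ramification filtration at $\infty$ that yields $g_{\ms X} = \frac 12(m + 2M - 3)$.
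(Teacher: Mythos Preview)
Your proposal is correct and follows essentially the same route as the paper: a dimension count showing the number of listed elements is $2g_{\ms X}$, followed by applying $\sigma-1$ and then $\tau-1$ to a hypothetical dependence relation and invoking Lemma~\ref{lem:afeu_linearly_independent} to kill the coefficients in stages. The only cosmetic difference is that the paper phrases the $\tau-1$ step more tersely (``analogously''), whereas you spell out that $G_i=0$ is already known at that point.
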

\begin{proof}
	Note that the number of listed elements is
	\begin{align*}
		3 \cdot |I_1| + 3 \cdot |I_2| + 2 \cdot |I_3| &=
		3 \cdot m' + 3 \cdot m' + 2 \cdot (M - m) = 2 g_\ms X.
	\end{align*}
	Thus it suffices to show that the listed elements are linearly independent.
	Suppose to the contrary that for some $A_i, B_i, C_i, E_i, F_i, G_i, U_i, V_i \in k$:
	\begin{align*}
		0 = \sum_{i \in I_1} (A_i \cdot a_i + B_i \cdot b_i + C_i \cdot c_i) + \sum_{i \in I_2} (E_i \cdot e_i + F_i \cdot f_i + G_i \cdot g_i) + \sum_{i \in I_3} (U_i \cdot u_i + V_i \cdot v_i).
	\end{align*}
	Then, by applying $(\sigma - \id)$ we obtain by Lemma~\ref{lem:automorphisms_on_abc_efg_uv}:
	\[
		0 = \sum_{i \in I_1} C_i \cdot a_i + \sum_{i \in I_2} G_i \cdot e_i 
		+ \sum_{i \in I_3} V_i \cdot u_i.
	\]
	Therefore by Lemma~\ref{lem:afeu_linearly_independent} $C_i = G_i = V_i = 0$. Analogously, by applying $(\tau - \id)$ we obtain
	$B_i = 0$. Hence we obtain:
	\begin{align*}
		0 = \sum_{i \in I_1} A_i \cdot a_i + \sum_{i \in I_2} (E_i \cdot e_i + F_i \cdot f_i) + \sum_{i \in I_3} U_i \cdot u_i.
	\end{align*}
	The proof follows by applying Lemma~\ref{lem:afeu_linearly_independent} one more time.
\end{proof}
\begin{proof}[Proof of {Proposition~\ref{prop:dR_of_Klein_HKG}}]
	Lemma~\ref{lem:automorphisms_on_abc_efg_uv} shows that the maps:
	\begin{align*}
		N_{2, 0} \to H^1_{dR}(\ms X), \qquad& (1, 0) \mapsto u_i, 	&(0, 1) \mapsto v_i, 	& & \textrm{ for any } i \in I_1,\\
		M_{3, 1} \to H^1_{dR}(\ms X), \qquad& (1, 0, 0) \mapsto a_i, 	&(0, 1, 0) \mapsto b_i, \quad &(0, 0, 1) \mapsto c_i, & \textrm{ for any } i \in I_2,\\
		M_{3, 2} \to H^1_{dR}(\ms X), \qquad& (1, 0, 0) \mapsto e_i, 	&(0, 1, 0) \mapsto f_i, \quad &(0, 0, 1) \mapsto g_i. & \textrm{ for any } i \in I_3.
	\end{align*}
	are equivariant. The direct sum of those maps is a $k[\VV_4]$-linear map:
	\[
		N_{2, 0}^{M - m} \oplus M_{3, 1}^{m'} \oplus M_{3, 2}^{m'}
		\to H^1_{dR}(\ms X),
	\]
	which is an isomorphism by Corollary~\ref{cor:basis_of_HKG_dR}.
\end{proof}
\bibliography{bibliografia}

\begin{thebibliography}{10}

\bibitem{Basev_reps_Z2_Z2}
V.~A. Ba\v{s}ev.
\newblock Representations of the group {$Z_{2}\times Z_{2}$} in a field of
  characteristic {$2$}.
\newblock {\em Dokl. Akad. Nauk SSSR}, 141:1015--1018, 1961.

\bibitem{Bleher_Camacho_Holomorphic_differentials}
F.~M. Bleher and N.~Camacho.
\newblock Holomorphic differentials of {K}lein four covers.
\newblock {\em J. Pure Appl. Algebra}, 227(10):Paper No. 107384, 27, 2023.

\bibitem{Bleher_Poonen_Chinburg_Auto_HKG}
F.~M. Bleher, T.~Chinburg, B.~Poonen, and P.~Symonds.
\newblock Automorphisms of {H}arbater-{K}atz-{G}abber curves.
\newblock {\em Math. Ann.}, 368(1-2):811--836, 2017.

\bibitem{Carlson_Friedlander_Suslin_Modules_for_Zp_Zp}
J.~F. Carlson, E.~M. Friedlander, and A.~Suslin.
\newblock Modules for {$\Bbb Z/p\times\Bbb Z/p$}.
\newblock {\em Comment. Math. Helv.}, 86(3):609--657, 2011.

\bibitem{Chevalley_Weil_Uber_verhalten}
C.~Chevalley, A.~Weil, and E.~Hecke.
\newblock \"{U}ber das verhalten der integrale 1. gattung bei automorphismen
  des funktionenk\"{o}rpers.
\newblock {\em Abh. Math. Sem. Univ. Hamburg}, 10(1):358--361, 1934.

\bibitem{Chinburg_Guralnick_Harbater_Oort_groups_lifting}
T.~Chinburg, R.~Guralnick, and D.~Harbater.
\newblock Oort groups and lifting problems.
\newblock {\em Compos. Math.}, 144(4):849--866, 2008.

\bibitem{Chinburg_Guralnick_Harbater_local_lifting_for_actions}
T.~Chinburg, R.~Guralnick, and D.~Harbater.
\newblock The local lifting problem for actions of finite groups on curves.
\newblock {\em Ann. Sci. \'{E}c. Norm. Sup\'{e}r. (4)}, 44(4):537--605, 2011.

\bibitem{Dummigan_99}
N.~Dummigan.
\newblock Complete {$p$}-descent for {J}acobians of {H}ermitian curves.
\newblock {\em Compositio Math.}, 119(2):111--132, 1999.

\bibitem{DummitFoote2004}
D.~S. Dummit and R.~M. Foote.
\newblock {\em Abstract algebra}.
\newblock John Wiley \& Sons, Inc., Hoboken, NJ, third edition, 2004.

\bibitem{FW_Kock_Galois_module_theory}
H.~Fischbacher-Weitz and B.~K\"{o}ck.
\newblock Galois-module theory for wildly ramified covers of curves over finite
  fields.
\newblock {\em Doc. Math.}, 24:175--208, 2019.
\newblock With an appendix by B. K\"{o}ck and A. Marmora.

\bibitem{Garnek_equivariant}
J.~Garnek.
\newblock Equivariant splitting of the {H}odge-de {R}ham exact sequence.
\newblock {\em Math. Z.}, 300(2):1917--1938, 2022.

\bibitem{Garnek_p_gp_covers}
J.~Garnek.
\newblock {$p$}-group {G}alois covers of curves in characteristic {$p$}.
\newblock {\em Trans. Amer. Math. Soc.}, 376(8):5857--5897, 2023.

\bibitem{Glass_Klein_covers}
D.~Glass.
\newblock Klein-four covers of the projective line in characteristic two.
\newblock {\em Albanian J. Math.}, 1(1):3--11, 2007.

\bibitem{Glass_Pries_Klein_covers}
D.~Glass and R.~Pries.
\newblock On the moduli space of {Klein} four covers of the projective line.
\newblock In {\em Computational aspects of algebraic curves. Papers from the
  conference, University of Idaho, Moscow, ID, USA, May 26--28, 2005}, pages
  58--70. Hackensack, NJ: World Scientific, 2005.

\bibitem{Gross_Rigid_local_systems_Gm}
B.~H. Gross.
\newblock Rigid local systems on {$\Bbb G_m$} with finite monodromy.
\newblock {\em Adv. Math.}, 224(6):2531--2543, 2010.

\bibitem{Harbater_moduli_of_p_covers}
D.~Harbater.
\newblock Moduli of {$p$}-covers of curves.
\newblock {\em Comm. Algebra}, 8(12):1095--1122, 1980.

\bibitem{Hartshorne1977}
R.~Hartshorne.
\newblock {\em {Algebraic geometry}}.
\newblock Springer-Verlag, New York-Heidelberg, 1977.
\newblock Graduate Texts in Mathematics, No. 52.

\bibitem{Hubl_residual_representation}
R.~H\"{u}bl.
\newblock Residual representation of algebraic-geometric codes.
\newblock Number~39, pages 25--48. 2001.
\newblock Effective methods in algebraic and analytic geometry, 2000
  (Krak\'{o}w).

\bibitem{Karanikolopoulos_Kontogeorgis_Autos}
S.~Karanikolopoulos and A.~Kontogeorgis.
\newblock Automorphisms of curves and {W}eierstrass semigroups for
  {H}arbater-{K}atz-{G}abber covers.
\newblock {\em Trans. Amer. Math. Soc.}, 371(9):6377--6402, 2019.

\bibitem{Katz_local_to_global}
N.~M. Katz.
\newblock Local-to-global extensions of representations of fundamental groups.
\newblock {\em Ann. Inst. Fourier (Grenoble)}, 36(4):69--106, 1986.

\bibitem{Kock_galois_structure}
B.~K\"{o}ck.
\newblock Galois structure of {Z}ariski cohomology for weakly ramified covers
  of curves.
\newblock {\em Amer. J. Math.}, 126(5):1085--1107, 2004.

\bibitem{kontogeorgis_terezakis_new_obstruction}
A.~Kontogeorgis and A.~Terezakis.
\newblock A new obstruction to the local lifting problem.
\newblock {\em arXiv:2304.08377}, 2023.

\bibitem{Kontogeorgis_Tsouknidas_Cohomological_HKG}
A.~Kontogeorgis and I.~Tsouknidas.
\newblock A cohomological treatise of {HKG}-covers with applications to the
  {N}ottingham group.
\newblock {\em J. Algebra}, 555:325--345, 2020.

\bibitem{Kontogeorgis_Tsouknidas_generating_set_HKG}
A.~Kontogeorgis and I.~Tsouknidas.
\newblock A generating set for the canonical ideal of {HKG}-curves.
\newblock {\em Res. Number Theory}, 7(1):Paper No. 4, 16, 2021.

\bibitem{laurent_kock_drinfeld}
L.~Laurent and B.~K{\"{o}}ck.
\newblock The canonical representation of the drinfeld curve.
\newblock {\em Mathematische Nachrichten}, online first, 2024.

\bibitem{Lusztig_Coxeter_orbits}
G.~Lusztig.
\newblock Coxeter orbits and eigenspaces of {F}robenius.
\newblock {\em Invent. Math.}, 38(2):101--159, 1976/77.

\bibitem{WardMarques_HoloDiffs}
S.~Marques and K.~Ward.
\newblock Holomorphic differentials of certain solvable covers of the
  projective line over a perfect field.
\newblock {\em Math. Nachr.}, 291(13):2057--2083, 2018.

\bibitem{Obus_local_lifting_problem}
A.~Obus.
\newblock The (local) lifting problem for curves.
\newblock In {\em Galois-{T}eichm\"{u}ller theory and arithmetic geometry},
  volume~63 of {\em Adv. Stud. Pure Math.}, pages 359--412. Math. Soc. Japan,
  Tokyo, 2012.

\bibitem{Obus_Wewers_Cyclic_extensions}
A.~Obus and S.~Wewers.
\newblock Cyclic extensions and the local lifting problem.
\newblock {\em Ann. of Math. (2)}, 180(1):233--284, 2014.

\bibitem{Pop_OortConjecture}
F.~Pop.
\newblock The {O}ort conjecture on lifting covers of curves.
\newblock {\em Ann. of Math. (2)}, 180(1):285--322, 2014.

\bibitem{Serre1979}
J.-P. Serre.
\newblock {\em {Local fields}}, volume~67 of {\em {Graduate Texts in
  Mathematics}}.
\newblock Springer-Verlag, New York-Berlin, 1979.
\newblock Translated from the French by Marvin Jay Greenberg.

\bibitem{Tate_residues_differentials_curves}
J.~Tate.
\newblock Residues of differentials on curves.
\newblock {\em Ann. Sci. \'{E}cole Norm. Sup. (4)}, 1:149--159, 1968.

\bibitem{Valentini_Madan_Automorphisms}
R.~C. Valentini and M.~L. Madan.
\newblock Automorphisms and holomorphic differentials in characteristic {$p$}.
\newblock {\em J. Number Theory}, 13(1):106--115, 1981.

\bibitem{Wu_Scheidler_Ramification_groups_AS_extensions}
Q.~Wu and R.~Scheidler.
\newblock The ramification groups and different of a compositum of
  {A}rtin-{S}chreier extensions.
\newblock {\em Int. J. Number Theory}, 6(7):1541--1564, 2010.

\end{thebibliography}
\end{document}